\newcommand*{\mailto}[1]{\href{mailto:#1}{\nolinkurl{#1}}}
\newcommand{\arxiv}[1]{\href{http://arxiv.org/abs/#1}{arXiv:#1}}
\DeclareRobustCommand\dash{%
\unskip\nobreak\thinspace\textemdash\thinspace\ignorespaces}
\newcommand{\R}{{\mathbb R}}
\newcommand{\C}{{\mathbb C}}
\newcommand{\bbC}{{\mathbb{C}}}
\newcommand{\bbN}{{\mathbb{N}}}
\newcommand{\bbR}{{\mathbb{R}}}
\newcommand{\bbT}{{\mathbb{T}}}
\newcommand{\cH}{{\mathcal H}}
\DeclareMathOperator{\ran}{ran}
\DeclareMathOperator{\dom}{dom}
\DeclareMathOperator*{\sgn}{sgn}
\renewcommand{\Re}{\text{\rm Re}}
\newcommand{\no}{\notag}
\newcommand{\lb}{\label}
\newcommand{\f}{\frac}
\newcommand{\ol}{\overline}
\newcommand{\wti}{\widetilde}
\newcommand{\hatt}{\widehat} 
\newcommand{\dott}{\,\cdot\,}
\renewcommand{\dot}{\overset{\textbf{\Large.}}}
\newcommand{\bi}{\bibitem}
\let\geq\geqslant
\let\leq\leqslant
\newcommand{\la}{\lambda}
\newcommand{\Lr}{{L^2((a,b);r dx)}}
\def\theequation{\@arabic\c@equation}
\numberwithin{equation}{section}
\newtheorem{theorem}{Theorem}[section]
\newtheorem{lemma}[theorem]{Lemma}
\newtheorem{corollary}[theorem]{Corollary}
\newtheorem{definition}[theorem]{Definition}
\newtheorem{hypothesis}[theorem]{Hypothesis}
\theoremstyle{remark}
\newtheorem{remark}[theorem]{Remark}
\begin{document}

\title[Strict Domain Monotonicity and Lower Bounds]{Strict Domain Monotonicity of the Principal Eigenvalue and a Characterization of Lower Boundedness for the Friedrichs Extension of Four-Coefficient Sturm--Liouville Operators}

\author[F.\ Gesztesy]{Fritz Gesztesy}
\address{Department of Mathematics, 
Baylor University, Sid Richardson Bldg., 1410 S.\,4th Street, Waco, TX 76706, USA}
\email{\mailto{Fritz\_Gesztesy@baylor.edu}}
\urladdr{\url{http://www.baylor.edu/math/index.php?id=935340}}

\author[R.\ Nichols]{Roger Nichols}
\address{Department of Mathematics (Dept.~6956), The University of Tennessee at Chattanooga, 
615 McCallie Avenue, Chattanooga, TN 37403, USA}
\email{\mailto{Roger-Nichols@utc.edu}}
\urladdr{\url{https://sites.google.com/mocs.utc.edu/rogernicholshomepage/home}}

\date{\today}
\@namedef{subjclassname@2020}{\textup{2020} Mathematics Subject Classification}
\subjclass[2020]{Primary: 34B24, 34L05, 34L15; Secondary: 47A10, 47E05.}
\keywords{Sturm--Liouville operator, Friedrichs extension, principal eigenvalue, domain monotonicity, disconjugacy.}

\dedicatory{Dedicated to the memory of Roger T.\ Lewis (1942--2021)}

\begin{abstract} 
Using the variational characterization of the principal (i.e., smallest) eigenvalue below the essential spectrum of a lower semibounded self-adjoint operator, we prove strict domain monotonicity (with respect to changing the finite interval length) of the principal eigenvalue of the Friedrichs extension $T_F$ of the minimal operator for regular four-coefficient Sturm--Liouville differential expressions. In the more general singular context, these four-coefficient differential expressions act according to
\[
 \tau f = \frac{1}{r} \left( - \big(f^{[1]}\big)' + s f^{[1]} + qf\right)\,\text{ with $f^{[1]} = p [f' + s f]$ on $(a,b) \subseteq \bbR$},
\] 
where  the coefficients $p$, $q$, $r$, $s$ are real-valued and Lebesgue measurable on $(a,b)$, with $p > 0$, $r>0$ a.e.\ on $(a,b)$, and $p^{-1}$, $q$, $r$, $s \in L^1_{loc}((a,b); dx)$,  
and $f$ is supposed to satisfy
\[
f \in AC_{loc}((a,b)), \; p[f' + s f] \in AC_{loc}((a,b)).  
\]
This setup is sufficiently general so that $\tau$ permits certain distributional potential coefficients $q$, including potentials in $H^{-1}_{loc}((a,b))$. 

As a consequence of the strict domain monotonicity of the principal eigenvalue of the Friedrichs extension in the regular case, and on the basis of oscillation theory in the singular context, in our main result, we characterize all lower bounds of $T_F$ as those $\lambda\in \bbR$ for which the differential equation $\tau u = \lambda u$ has a strictly positive solution $u > 0$ on $(a,b)$.
\end{abstract}

\maketitle

{\scriptsize{\tableofcontents}}

\maketitle



\section{Introduction} \lb{s1}

To set the stage and at the same time motivate our interest in singular Sturm--Liouville operators on an arbitrary interval $(a,b) \subseteq \bbR$ associated with general four-coefficient differential expressions of the type 
\begin{equation}
 \tau f = \frac{1}{r} \left( - \big(p[f' + s f]\big)' + s p[f' + s f] + qf\right),    \lb{1.1}
\end{equation}
following \cite{EGNT13}, one assumes the coefficients 
\begin{align} 
\begin{split} 
& p, q, r, s \, \text{ to be real-valued and Lebesgue measurable on $(a,b)$},    \\
& p, r>0 \, \text{ a.e.\ on $(a,b)$,} \quad p^{-1}, q, r, s \in L^1_{loc}((a,b); dx).    \lb{1.2} 
\end{split} 
\end{align}
Here $f$ is supposed to satisfy
\begin{equation} 
f \in AC_{\text{loc}}((a,b)), \; p[f' + s f] \in AC_{\text{loc}}((a,b)), 
\end{equation}
with $AC_{loc}((a,b))$ denoting the set of locally absolutely continuous functions on $(a,b)$. The expression 
\begin{equation} 
f^{[1]} = p[f'+s f] 
\end{equation}
represents the {\it first quasi-derivative} of $f$.

One notes that in the general case \eqref{1.1}, the differential expression is formally given by
\begin{equation}
\tau f = \frac{1}{r} \left( - \big(pf'\big)' + \big[- (p s)' + p s^2 + q\big]f \right).
\end{equation}
Moreover, in the special case $s\equiv 0$ this approach reduces to the standard one, that is, 
one obtains,
\begin{equation}
\tau f = \frac{1}{r} \left( - \big(pf'\big)' + q f \right).
\end{equation}
 
In the particular case $p=r=1$ this approach is sufficiently general to include arbitrary distributional potential coefficients from $H^{-1}_{loc}((a,b)) = W^{-1,2}_{loc}((a,b))$ (as the term 
$s^2$ can be absorbed in $q$). Moreover, since there are distributions in $H^{-1}_{loc}((a,b))$ which are not measures, the operators discussed here are not a special case of Sturm--Liouville operators with measure-valued coefficients as discussed, for instance, in \cite{ET12}.

We emphasize that similar differential expressions have already been studied by 
Bennewitz and Everitt \cite{BE83} in 1983 (see also \cite[Sect.\ I.2]{EM99}). An extremely thorough and systematic investigation, including even and odd higher-order operators defined in terms of appropriate quasi-derivatives, and in the general case of matrix-valued coefficients (including distributional potential coefficients in the context of Schr\"odinger-type operators) was presented by Weidmann \cite{We87} in 1987. In fact, the general approach in \cite{BE83} and \cite{We87} draws on earlier discussions of quasi-derivatives in Shin \cite{Sh38}--\cite{Sh43}, 
Naimark \cite[Ch.\ V]{Na68}, and Zettl \cite{Ze75}. There were also earlier papers dealing with Schr\"odinger operators involving strongly singular and oscillating potentials which should be mentioned in this context, such as, Baeteman and Chadan \cite{BC75}, \cite{BC76}, Combescure \cite{Co80}, 
Combescure and Ginibre \cite{CG76}, Herczy\'nski \cite{He89}, Pearson \cite{Pe79}, and 
Rofe-Beketov and Hristov \cite{RH66}, \cite{RH69}. 

In addition, the case of point interactions as particular distributional potential coefficients in Schr\"odinger operators received enormous attention, too numerous to be mentioned here in detail. Hence, we only refer to the standard monographs by Albeverio, Gesztesy, H{\o}egh-Krohn, and Holden \cite{AGHKH05} and  
Albeverio and Kurasov \cite{AK01}, and some of the more recent developments in 
Albeverio, Kostenko, and Malamud \cite{AKM10}, and Kostenko and Malamud 
\cite{KM10}--\cite{KM13}.

In 1999 Savchuk and Shkalikov \cite{SS99} started a new development for Sturm--Liouville (resp., Schr\"odinger) operators with distributional potential coefficients
in connection with 
areas such as self-adjointness proofs, spectral and inverse spectral theory, oscillation properties, spectral properties in the non-self-adjoint context, etc. In addition to the important series of papers by Savchuk and Shkalikov \cite{SS99}--\cite{SS10}, we also mention other groups such as Albeverio, Hryniv, and Mykytyuk \cite{AHM08}, Bak and Shkalikov \cite{BS02},  Ben Amara and Shkalikov \cite{BS09}, Ben Amor and Remling \cite{BR05}, Davies \cite{Da13}, 
Djakov and Mityagin \cite{DM09}--\cite{DM12}, Eckhardt and Teschl \cite{ET12}, 
Frayer, Hryniv, Mykytyuk, and Perry \cite{FHMP09}, Gesztesy and Weikard \cite{GW13}, 
Goriunov and Mikhailets \cite{GM10}, \cite{GM10a}, Goriunov, Mikhailets, and Pankrashkin \cite{GMP13}, Guliyev \cite{Gu19}, Hryniv \cite{Hr11}, 
Kappeler and M\"ohr \cite{KM01}, Kappeler, Perry, Shubin, and Topalov \cite{KPST05}, 
Kappeler and Topalov \cite{KT04}, 
Hryniv and Mykytyuk \cite{HM01}--\cite{HM12}, Hryniv, Mykytyuk, and Perry \cite{HMP11}--\cite{HMP11a}, 
Kato \cite{Ka10}, Korotyaev \cite{Ko03}, \cite{Ko12}, Maz'ya and Shaposhnikova \cite[Ch.\ 11]{MS09}, 
Maz'ya and Verbitsky \cite{MV02}--\cite{MV06}, Mikhailets and Molyboga \cite{MM04}--\cite{MM09}, 
Mirzoev and Safanova \cite{MS11}, Mykytyuk and Trush \cite{MT10}, 
and Sadovnichaya \cite{Sa10}, \cite{Sa11}. For a modern treatment we also refer to the very general contribution by Ghatasheh and Weikard \cite{GW20}. 

Finally, we also mention that some of the attraction in connection with distributional potential coefficients 
in the Schr\"odinger operator clearly stems from the low-regularity investigations of solutions of the 
Korteweg--de Vries (KdV) equation. We mention, for instance, Buckmaster and Koch \cite{BK15}, 
Grudsky and Rybkin \cite{GR14}, Kappeler and M\"ohr \cite{KM01}, Kappeler and Topalov \cite{KT05}, \cite{KT06}, and Rybkin \cite{Ry10}. 

Returning to the topic at hand, we discuss the Friedrichs (or Dirichlet) extension $T_F$ of the minimal operator associated with $\tau$ in the regular and singular context (see Definition \ref{d2.2} and \eqref{2.12}) and as our principal results will prove the following two facts recorded in Theorems \ref{t1.1} and \ref{t1.2} below. \\[1mm] 
Let $T_{F,(\alpha,\beta)}$ be the Friedrichs extension of $T_{min,(\alpha,\beta)}$ associated with 
$\tau|_{(\alpha,\beta)}$, where $(\alpha,\beta)\subseteq (a,b)$. In addition, denote by  
$\lambda_1(A)$ the lowest eigenvalue of the self-adjoint operator $A$ in a Hilbert space $\cH$ (implicitly assuming the discrete spectrum below the infimum of the essential spectrum of $A$ is nonempty). Then the strict domain monotonicity of the principal (i.e., smallest) eigenvalue of 
$T_F$ proved in Theorem \ref{t3.4} can be summarized as follows: 
\begin{theorem} \lb{t1.1}
Suppose $\tau$ is regular on the finite interval $(a,b)$ $($see Definition \ref{d2.2}$)$. Let 
$T_{F,(c,d)}$ denote the Friedrichs extension of $T_{min,(c,d)}$ associated with $\tau|_{(c,d)}$, where $(c,d)\subset (a,b)$ and $c<d$; that is, either $a\leq c$ and $d<b$, or else $a<c$ and 
$d\leq b$, so that $(c,d)$ is strictly contained in $(a,b)$. Then
\begin{equation}
\lambda_1(T_{F,(a,b)}) < \lambda_1(T_{F,(c,d)}).
\end{equation}
 \end{theorem}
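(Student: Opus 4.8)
The plan is to combine the variational (Rayleigh--Ritz) characterization of the principal eigenvalue with the elementary observation that extension by zero embeds the form domain on the smaller interval into the form domain on the larger one, and then to upgrade the resulting non-strict inequality to a strict one by invoking uniqueness of solutions of $\tau u = \lambda u$. First I would record the quadratic form underlying $T_{F,(a,b)}$: integrating by parts in $\langle \tau f, f\rangle_{L^2((a,b);r\,dx)}$ and discarding the boundary terms (which vanish on the form domain) yields
\begin{equation}
Q_{(a,b)}[f] = \int_a^b \br{ p\,\abs{f' + s f}^2 + q\,\abs{f}^2 }\,dx,
\end{equation}
with form domain $\cD_{(a,b)}$ equal to the closure of $\dom(T_{min,(a,b)})$ in the form norm. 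Since $\tau$ is regular on the finite interval $(a,b)$ (Definition \ref{d2.2}), the spectrum of $T_{F,(a,b)}$ is purely discrete and bounded below, every $f \in \cD_{(a,b)}$ satisfies the Dirichlet conditions $f(a)=f(b)=0$, and the variational characterization
\begin{equation}
\lambda_1(T_{F,(a,b)}) = \inf_{0 \neq f \in \cD_{(a,b)}} \frac{Q_{(a,b)}[f]}{\norm{f}_{L^2((a,b);r\,dx)}^2}
\end{equation}
has its infimum attained, with every minimizer an eigenfunction belonging to $\dom(T_{F,(a,b)})$.

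Next I would establish the non-strict inequality. Given $g \in \cD_{(c,d)}$, extend it by zero to $\widetilde g$ on $(a,b)$; because $g(c)=g(d)=0$, the extension is continuous, and one checks directly that $\widetilde g \in \cD_{(a,b)}$ with $Q_{(a,b)}[\widetilde g] = Q_{(c,d)}[g]$ and $\norm{\widetilde g}_{L^2((a,b);r\,dx)} = \norm{g}_{L^2((c,d);r\,dx)}$, so the Rayleigh quotient is preserved. Thus zero-extension identifies $\cD_{(c,d)}$ with a subset of $\cD_{(a,b)}$, and restricting the infimum in the variational characterization to this subset gives
\begin{equation}
\lambda_1(T_{F,(a,b)}) \leq \lambda_1(T_{F,(c,d)}).
\end{equation}

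The main point is strictness, which I would prove by contradiction. Suppose equality holds and let $g$ be a principal eigenfunction of $T_{F,(c,d)}$. Then $\widetilde g$ attains the infimum for $T_{F,(a,b)}$, hence is a principal eigenfunction of $T_{F,(a,b)}$; in particular $\widetilde g \in \dom(T_{F,(a,b)})$, so its quasi-derivative $\widetilde g^{[1]} = p[\widetilde g' + s\widetilde g]$ lies in $AC_{loc}((a,b))$ and has no jumps. Since $(c,d) \subset (a,b)$ strictly, at least one of $(a,c)$, $(d,b)$ is nonempty; say $d<b$ (the case $a<c$ is symmetric). On $(d,b)$ we have $\widetilde g \equiv 0$, so $\widetilde g^{[1]} \equiv 0$ there, and continuity of $\widetilde g$ and $\widetilde g^{[1]}$ at $d$ forces the Cauchy data $\widetilde g(d) = \widetilde g^{[1]}(d) = 0$. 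But $\widetilde g$ solves $\tau u = \lambda_1(T_{F,(a,b)})\,u$ on $(a,b)$, and the coefficient hypotheses \eqref{1.2} guarantee unique solvability of the associated initial value problem; hence vanishing Cauchy data at $d$ forces $\widetilde g \equiv 0$ on all of $(a,b)$, contradicting that $\widetilde g$ is a nontrivial eigenfunction. Therefore the inequality is strict.

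I expect the only genuinely delicate steps to be the two structural claims invoked above: that the zero-extension indeed lands in $\cD_{(a,b)}$, and that a minimizer of the Rayleigh quotient in the regular case lies in $\dom(T_{F,(a,b)})$ rather than merely in the form domain. Both reduce to the standard description of the Friedrichs form and to discreteness of the spectrum in the regular case, so the real content of the argument is the regularity obstruction at the gluing point: a nontrivial Dirichlet eigenfunction on $(c,d)$ has nonvanishing quasi-derivative at $c$ and $d$ (again by uniqueness of the initial value problem), so its zero-extension must develop a quasi-derivative jump and cannot belong to $\dom(T_{F,(a,b)})$; equivalently, strict positivity of the principal eigenfunction on all of $(a,b)$ is incompatible with vanishing on the nonempty set $(a,b)\setminus[c,d]$.
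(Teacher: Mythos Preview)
Your proposal is correct and follows essentially the same approach as the paper: zero-extension of form-domain elements yields the non-strict inequality via the variational characterization, and strictness follows by contradiction from the fact that a Rayleigh-quotient minimizer is a genuine eigenfunction, which then vanishes on a nonempty open subinterval and hence, by uniqueness of the initial value problem (Theorem~\ref{t2.3}), is identically zero. The only organizational difference is that the paper isolates the step ``a minimizer of the Rayleigh quotient lies in $\dom(T_{F,(a,b)})$ and is an eigenfunction'' as a separate abstract result (Theorem~\ref{t3.2}), whereas you invoke it as a standard consequence of discreteness of the spectrum; you correctly flag this as one of the two structural claims needing justification.
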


Our second principal result, proved in Theorem \ref{t4.3}, characterizes all lower bounds of 
$T_{F,(a,b)}$ as those $\lambda\in \bbR$ for which the differential equation $\tau u = \lambda u$ has a strictly positive solution $u > 0$ on $(a,b)$. More precisely, we will prove the following facts:
\begin{theorem} \lb{t1.2} 
Assume \eqref{1.2} and let $\lambda_0\in \bbR$.  Then the following statements $(i)$--$(iv)$ are equivalent. \\[1mm]
$(i)$  $T_{min,(a,b)}\geq \lambda_0I_{L^2((a,b);r\,dx)}$.\\[1mm]
$(ii)$ $T_{F,(a,b)}\geq \lambda_0I_{L^2((a,b);r\,dx)}$.\\[1mm]
$(iii)$  $\tau-\lambda_0$ is disconjugate $($cf.\ Definition \ref{d2.16}$)$ on $(a,b)$.\\[1mm]
$(iv)$  There exists a strictly positive solution $u_0(\lambda_0,\,\cdot\,)>0$ of 
$(\tau - \lambda_0)u = 0$ on $(a,b)$.
\end{theorem}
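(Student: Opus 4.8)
The plan is to establish all four equivalences through the cycle $(ii)\Rightarrow(iii)\Rightarrow(iv)\Rightarrow(ii)$ together with the separate equivalence $(i)\Leftrightarrow(ii)$. The latter is the quickest: since $T_{min,(a,b)}\subseteq T_{F,(a,b)}$, any lower bound for $T_{F,(a,b)}$ restricts at once to one for $T_{min,(a,b)}$, while the converse is the standard fact that the Friedrichs extension preserves the greatest lower bound of its underlying form. Thus $(i)\Leftrightarrow(ii)$ reduces to the defining property of $T_{F,(a,b)}$ as the form closure of $\mathfrak{t}[f]=\int_a^b\big[p|f'+sf|^2+q|f|^2\big]\,dx$ on the domain of $T_{min,(a,b)}$, which is itself obtained from \eqref{1.1} by integration by parts on a form core.

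For $(iv)\Rightarrow(ii)$ I would use a ground-state (Picone-type) factorization. Given a solution $u_0>0$ of $(\tau-\lambda_0)u=0$, I would substitute $f=u_0 v$ for compactly supported $f$ in a form core and integrate the cross term by parts, using the identity $(u_0u_0^{[1]})'=(u_0^{[1]})^2/p+qu_0^2-\lambda_0 r u_0^2$, which follows directly from $(\tau-\lambda_0)u_0=0$. This should collapse the form to the manifestly nonnegative expression
\begin{equation*}
\mathfrak{t}[f]-\lambda_0\|f\|_{L^2((a,b);r\,dx)}^2=\int_a^b p\,u_0^2\,\big|(f/u_0)'\big|^2\,dx\geq 0 .
\end{equation*}
Closing this inequality from the core to the full form domain then yields $T_{F,(a,b)}\geq\lambda_0 I_{L^2((a,b);r\,dx)}$, i.e.\ statement $(ii)$.

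For $(ii)\Rightarrow(iii)$ I would argue by contraposition, and this is where Theorem \ref{t1.1} enters decisively. If $\tau-\lambda_0$ fails to be disconjugate (cf.\ Definition \ref{d2.16}), there is a nontrivial solution of $(\tau-\lambda_0)u=0$ with two zeros in $(a,b)$; passing to consecutive zeros $c<d$ I may assume $u>0$ on $(c,d)$, so that $u|_{[c,d]}$ is a nodeless Dirichlet eigenfunction on the regular interval $[c,d]$ and hence $\lambda_1(T_{F,(c,d)})=\lambda_0$. Choosing a strictly larger regular interval with $[c,d]\subsetneq[c',d']\subset(a,b)$, the \emph{strict} domain monotonicity of Theorem \ref{t1.1} gives $\lambda_1(T_{F,(c',d')})<\lambda_1(T_{F,(c,d)})=\lambda_0$. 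Extending the corresponding ground state by zero produces an element of the form domain of $T_{F,(a,b)}$ whose Rayleigh quotient equals $\lambda_1(T_{F,(c',d')})<\lambda_0$, contradicting $(ii)$. It is precisely the strictness in Theorem \ref{t1.1} that upgrades the boundary estimate $\lambda_1(T_{F,(c,d)})=\lambda_0$ to a strict violation of the lower bound, thereby making the characterization sharp at the endpoint value $\lambda_0=\inf\sigma(T_{F,(a,b)})$.

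Finally, for $(iii)\Rightarrow(iv)$ I would invoke oscillation theory in the singular setting: exhausting $(a,b)$ by an increasing sequence of compact subintervals $[c_n,d_n]$ on which $\tau$ is regular, disconjugacy furnishes a positive solution of $(\tau-\lambda_0)u=0$ on each $[c_n,d_n]$, and after normalization at a fixed interior point I would extract, via continuous dependence on initial data together with a diagonal/compactness argument, a limiting solution $u_0>0$ on all of $(a,b)$; the equivalence $(iii)\Leftrightarrow(iv)$ is the classical characterization of disconjugacy by positive solutions, adapted to the quasi-derivative formalism. I expect the main obstacle to lie in this last step in the genuinely singular case: ensuring that the normalized solutions stay uniformly controlled up to singular or infinite endpoints, so that the limit remains strictly positive on the \emph{open} interval, requires the principal-solution machinery of singular oscillation theory rather than a naive compactness argument.
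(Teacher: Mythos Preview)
Your cycle $(i)\Leftrightarrow(ii)$, $(iv)\Rightarrow(ii)$ via the Jacobi/Picone factorization, and $(ii)\Rightarrow(iii)$ by contraposition through strict domain monotonicity (Theorem~\ref{t1.1}) are exactly the paper's arguments. The only substantive divergence is in $(iii)\Rightarrow(iv)$: you propose exhausting $(a,b)$ by regular subintervals, building positive solutions on each, and passing to a limit, with attendant worries about compactness and control near singular endpoints. The paper instead settles $(iii)\Leftrightarrow(iv)$ directly on the full open interval by a short Wronskian contradiction (Theorem~\ref{t4.2}): if $\tau-\lambda_0$ is disconjugate but every nontrivial real solution has a zero, then by disconjugacy each has \emph{exactly} one; choosing linearly independent solutions $u_1,u_2$ with distinct zeros $x_1\neq x_2$ and scaling so that $u_1^{[1]}(\lambda_0,x_1)=u_1(\lambda_0,x_2)$ and $u_2^{[1]}(\lambda_0,x_2)=u_2(\lambda_0,x_1)$, one evaluates the constant nonzero Wronskian at $x_1$ and at $x_2$ to get $c=-u_1(\lambda_0,x_2)u_2(\lambda_0,x_1)$ and $c=+u_1(\lambda_0,x_2)u_2(\lambda_0,x_1)$, a contradiction. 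This is purely algebraic, requires no compactness, no limits, and no endpoint analysis, and works verbatim whether $(a,b)$ is regular or singular---so your anticipated obstacle simply does not arise. Your exhaustion argument can in fact also be pushed through on the open interval (normalize $(u_n(x_0),u_n^{[1]}(x_0))$ to the unit circle, extract a subsequential limit, and note that a nonnegative nontrivial solution cannot have an interior zero since both $u$ and $u^{[1]}$ would vanish there), so the concern about needing ``principal-solution machinery'' is overstated; but the paper's Wronskian trick is shorter and more transparent.
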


The necessary background for four-coefficient Sturm--Liouville operators is provided in Section \ref{s2}; Sections \ref{s3} and \ref{s4} contain our principal results, including some variational characterizations of eigenvalues and eigenfunctions. 
 
Finally, a few remarks on the notation employed: Given a separable complex Hilbert space $\cH$, $(\dott,\dott)_{\cH}$ denotes the scalar product in $\cH$ (linear in the second argument), and $I_{\cH}$ represents the identity operator in $\cH$. The domain and range of a linear operator $T$ in $\cH$ are abbreviated by $\dom(T)$ and $\ran(T)$. The closure of a closable operator $S$ is denoted by $\ol S$. The kernel (null space) of $T$ is denoted by
$\ker(T)$. The spectrum, discrete spectrum, and essential spectrum of a closed linear operator in 
$\cH$ will be abbreviated by $\sigma(\cdot)$, $\sigma_{d}(\cdot)$, and $\sigma_{ess}(\cdot)$,  respectively.

\section{Four-Coefficient Sturm--Liouville Operators} \lb{s2}

In this section we summarize the basics of four-coefficient Sturm--Liouville differential expressions and their associated Hilbert space operators to be used in the sequel.  In particular, since the main aim of Section \ref{s4} is to characterize the lower bounds of the Friedrichs extension of the underlying minimal operator, we recall several results that connect lower semiboundedness of the minimal operator with non-oscillation properties of the underlying differential expression.  These results lead to a characterization of the Friedrichs extension in terms of principal/nonprincipal solutions and generalized boundary values entirely analogous to those introduced for three-coefficient expressions in \cite{GLN20}.  With the exception of Theorems \ref{t2.18}
 and \ref{t2.20}
 and part of Corollary \ref{c2.24}, all of the results in this section may be found with complete proofs in \cite{EGNT13}.  Although Theorems \ref{t2.18}
 and \ref{t2.20}
 are new in the context of four-coefficient Sturm--Liouville expressions, the proof of Theorem \ref{t2.18}
 is identical to the analogous result for three-coefficient expressions obtained in \cite[Theorem 4.5]{GLN20}, and Theorem \ref{t2.20}
 then follows as a restatement of \cite[Theorem 6.4]{EGNT13} in terms of generalized boundary values.

We begin by introducing a basic hypothesis which is assumed throughout this paper.

\begin{hypothesis} \lb{h2.1}
Let $-\infty\leq a<b\leq \infty$.  Suppose that $p$, $q$, $r$, $s$ are Lebesgue measurable on $(a,b)$ with $p^{-1}, q, r, s\in L^1_{loc}((a,b); dx)$ and real-valued a.e.~on $(a,b)$ with $r>0$ and $p>0$  a.e.~on $(a,b)$.
\end{hypothesis}

Assuming Hypothesis \ref{h2.1}, we introduce the set
\begin{equation}
\mathfrak{D}_{\tau}((a,b))=\big\{g\in AC_{loc}((a,b))\, \big|\, g^{[1]}=p [g' + s g] \in AC_{loc}((a,b))\big\}
\end{equation}
and the differential expression $\tau$ defined by
\begin{equation}
 \tau f = \frac{1}{r} \Big( - \big(f^{[1]}\big)' + s f^{[1]} + qf\Big) \in L_{loc}^1((a,b);r\,dx),\quad f\in \mathfrak{D}_{\tau}((a,b)),  \lb{2.2}
\end{equation}
where the expression
\begin{equation}
f^{[1]}=p [f' + s f], \quad f \in \mathfrak{D}_{\tau}((a,b)),    \lb{2.3}
\end{equation}
is the \textit{first quasi-derivative} of $f$.  For each $f,g\in \mathfrak{D}_{\tau}((a,b))$, the (modified) Wronskian of $f$ and $g$ is defined by
\begin{equation}\lb{2.4}
W(f,g)(x) = f(x)g^{[1]}(x) - f^{[1]}(x)g(x),\quad x\in (a,b).
\end{equation}
One infers that $W(f,g)$ is locally absolutely continuous on $(a,b)$ and its derivative is
\begin{equation}
W(f,g)'(x) = \big[g(x) (\tau f)(x) - f(x) (\tau g)(x)\big] r(x), \quad x\in(a,b). 
\end{equation}
In particular, if $z\in \bbC$, then the Wronskian of two solutions $u_j(z,\,\cdot\,)\in \mathfrak{D}_{\tau}((a,b))$, $j\in\{1,2\}$, of $(\tau - z)u = 0$ on $(a,b)$ is constant.  Moreover, $W(u_1(\lambda,\,\cdot\,),u_2(\lambda,\,\cdot\,))\neq 0$ if and only if $u_1(\lambda,\,\cdot\,)$ and $u_2(\lambda,\,\cdot\,)$ are linearly independent.

\begin{definition} \lb{d2.2} 
The differential expression $\tau$ is said to be \textit{regular} on $(a,b)$ if $-\infty<a<b<\infty$ $($i.e., $a$ and $b$ are finite$)$ and $p^{-1}, q, r, s\in L^1((a,b); dx)$; otherwise, $\tau$ is said to be \textit{singular} on $(a,b)$.
\end{definition}

If $\tau$ is regular on $(a,b)$, then for each $f\in \mathfrak{D}_{\tau}((a,b))$ the following limits exist and are finite:
\begin{equation}\lb{2.6a}
\begin{split}
&f(a):=\lim_{x\downarrow a}f(x),\quad f^{[1]}(a):=\lim_{x\downarrow a}f^{[1]}(x),\\
&f(b):=\lim_{x\uparrow b}f(x),\quad f^{[1]}(b):=\lim_{x\uparrow b}f^{[1]}(x).
\end{split}
\end{equation}

We recall the following existence and uniqueness result for initial-value problems corresponding to the differential equation $(\tau-z)f=g$.
\begin{theorem}[{\cite[Theorem 2.2]{EGNT13}}]\label{t2.3}

If $g\in L^1_{\rm{loc}}((a,b);r\,dx)$, $z\in\C$, $c\in(a,b)$, and $\alpha$, $\beta\in\C$, then there is a unique solution  $f\in\mathfrak{D}_{\tau}((a,b))$ satisfying $(\tau - z) f = g$, $f(c)=\alpha$, and $f^{[1]}(c)=\beta$.  If, in addition, $g$, $\alpha$, $\beta$, and $z$ are real-valued, then the solution $f$ is real-valued.
\end{theorem}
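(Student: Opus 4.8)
The plan is to reduce the scalar quasi-differential initial-value problem to an equivalent first-order linear system for the vector consisting of $f$ and its first quasi-derivative $f^{[1]}$, and then to invoke the classical Carath\'eodory theory for linear systems with locally integrable coefficients. Concretely, I would set $Y = \begin{pmatrix} f \\ f^{[1]} \end{pmatrix}$ and translate the two requirements $f \in \mathfrak{D}_{\tau}((a,b))$ and $(\tau - z)f = g$ into differential relations. The defining relation $f^{[1]} = p[f' + s f]$ gives $f' = p^{-1} f^{[1]} - s f$, while multiplying $(\tau - z)f = g$ by $r$ and rearranging yields $\big(f^{[1]}\big)' = (q - zr) f + s f^{[1]} - r g$. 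Thus the problem is equivalent to
\[
Y' = A(\,\cdot\,) Y + b(\,\cdot\,), \qquad Y(c) = \begin{pmatrix} \alpha \\ \beta \end{pmatrix},
\]
with
\[
A = \begin{pmatrix} -s & p^{-1} \\ q - zr & s \end{pmatrix}, \qquad b = \begin{pmatrix} 0 \\ -r g \end{pmatrix}.
\]

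Next I would verify the integrability hypotheses that drive the existence/uniqueness theory. By Hypothesis \ref{h2.1} the entries $p^{-1}, q, r, s$ all lie in $L^1_{loc}((a,b); dx)$, so each entry of $A$ does as well (the dependence on $z$ enters only through the locally integrable combination $q - zr$); and since the assumption $g \in L^1_{loc}((a,b); r\,dx)$ is precisely the statement that $r g \in L^1_{loc}((a,b); dx)$, the inhomogeneity $b$ is locally integrable too. With $A, b \in L^1_{loc}$ in hand, I would recast the initial-value problem as the Volterra integral equation $Y(x) = Y(c) + \int_c^x \big[A(t) Y(t) + b(t)\big]\,dt$ and solve it by successive approximations. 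Convergence of the Picard iterates on each compact subinterval $[\gamma,\delta] \subset (a,b)$ follows from a Gronwall-type estimate in which the finite local bound $\int_\gamma^\delta \|A(t)\|\,dt$ replaces the usual supremum bound, and the same estimate delivers uniqueness. The resulting $Y$ is locally absolutely continuous, so both $f = Y_1$ and $f^{[1]} = Y_2$ belong to $AC_{loc}((a,b))$, placing $f \in \mathfrak{D}_{\tau}((a,b))$; reversing the equivalence confirms that $f$ solves $(\tau - z)f = g$ with $f(c) = \alpha$ and $f^{[1]}(c) = \beta$. For the reality assertion, when $g, \alpha, \beta, z$ are real the matrix $A$ and the data $b$, $Y(c)$ are real, so every Picard iterate is real-valued and hence so is the limit; equivalently, $\overline{f}$ then solves the same real initial-value problem, and uniqueness forces $\overline{f} = f$.

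The main obstacle I anticipate is technical rather than conceptual: carrying out the Carath\'eodory existence/uniqueness argument carefully for merely $L^1_{loc}$ (rather than continuous) coefficients, i.e., justifying the convergence of the successive approximations and the Gronwall estimate with $\int \|A\|\,dt$ in place of a sup-norm, together with the interchange of limit and integral needed to identify the limit as a genuine solution of the integral equation. Since this is entirely classical and the $z$-dependence is harmless, no essentially new difficulty arises beyond a clean bookkeeping of the local $L^1$ bounds.
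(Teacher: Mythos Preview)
Your approach is correct and is precisely the standard one: the paper does not supply its own proof of this result but merely cites \cite[Theorem~2.2]{EGNT13}, where the argument proceeds exactly as you outline---rewriting the equation as the first-order system $Y' = AY + b$ with $A = \begin{pmatrix} -s & p^{-1} \\ q - zr & s \end{pmatrix}$ and $b = \begin{pmatrix} 0 \\ -rg \end{pmatrix}$, verifying that $A$ and $b$ have locally integrable entries, and invoking the Carath\'eodory existence and uniqueness theorem for linear systems. Your derivation of the system, the integrability check, and the reality argument are all correct.
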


\begin{definition} \lb{d2.4} 
A measurable function $f$ lies in $\Lr$ near $a$ $($resp., near $b$$)$ if $f$ lies in $L^2((a,c);r\, dx)$ $($resp., in $L^2((c,b);r\, dx)$$)$ for each $c\in(a,b)$.
\end{definition}

The celebrated Weyl alternative may then be stated as follows.

\begin{theorem}[Weyl's Alternative, {\cite[Section 4]{EGNT13}}] \lb{t2.5}
 ${}$ \\
Assume Hypothesis \ref{h2.1}. Then the following alternative holds.  Either: \\[1mm] 
$(i)$ For every $z\in\bbC$, all solutions $u$ of $(\tau-z)u=0$ are in $L^2((a,b);r dx)$ near $b$ 
$($resp., near $a$$)$, \\[1mm] 
or, \\[1mm] 
$(ii)$ For every $z\in\bbC$, there exists at least one solution $u$ of $(\tau-z)u=0$ which is not in $L^2((a,b);r dx)$ near $b$ $($resp., near $a$$)$. In this case, for each $z\in\bbC\backslash\bbR$, there exists precisely one solution $u_b$ $($resp., $u_a$$)$ of $(\tau-z)u=0$ $($up to constant multiples$)$ which lies in $L^2((a,b);r dx)$ near $b$ $($resp., near $a$$)$. 
\end{theorem}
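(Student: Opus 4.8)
The plan is to run the classical Weyl nested-circle (limit-point/limit-circle) argument, transcribed into the quasi-derivative formalism of Section \ref{s2}; by the reflection symmetry $a\leftrightarrow b$ it suffices to analyze the endpoint $b$. First I would fix a base point $c\in(a,b)$ and, for each $z\in\bbC$, use Theorem \ref{t2.3} to produce the fundamental system $\theta(z,\cdot),\phi(z,\cdot)\in\mathfrak{D}_{\tau}((a,b))$ of $(\tau-z)u=0$ normalized by $\theta(z,c)=1$, $\theta^{[1]}(z,c)=0$, $\phi(z,c)=0$, $\phi^{[1]}(z,c)=1$, so that the constant Wronskian satisfies $W(\theta,\phi)\equiv 1$. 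For $z$ with $\Im(z)>0$, $\beta\in(c,b)$, and $\gamma\in[0,\pi)$, I would consider the solution $\psi=\theta+m\phi$ subject to the real boundary condition $\cos(\gamma)\psi(\beta)+\sin(\gamma)\psi^{[1]}(\beta)=0$; solving for $m$ exhibits $m=m(z,\beta,\gamma)$ as a Möbius image of $\gamma$, hence traces a circle $C_\beta$ in the $m$-plane.

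The workhorse is the Lagrange/Green identity, which in this setting reads $W(\overline\psi,\psi)'=(\overline z-z)|\psi|^2 r$ for any solution $\psi$ of $(\tau-z)\psi=0$. Integrating from $c$ to $\beta$, using $W(\overline\psi,\psi)=2i\,\Im(\overline\psi\,\psi^{[1]})$, and observing that the real boundary condition at $\beta$ forces $\Im(\overline\psi\,\psi^{[1]})(\beta)=0$ while $\overline\psi\,\psi^{[1]}$ equals $m$ at $c$, yields the fundamental identity $\Im(m)=\Im(z)\int_c^\beta|\theta+m\phi|^2 r\,dt$. From this I would read off that the interior disk $D_\beta$ is the set of $m$ with $\int_c^\beta|\theta+m\phi|^2 r\,dt\le \Im(m)/\Im(z)$, that its radius equals $\big(2\Im(z)\int_c^\beta|\phi|^2 r\,dt\big)^{-1}$, and that the disks are nested and shrinking as $\beta\uparrow b$. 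The intersection is therefore either a single point (the \emph{limit-point} case, equivalent to $\int_c^b|\phi|^2 r=\infty$) or a genuine disk (the \emph{limit-circle} case, $\int_c^b|\phi|^2 r<\infty$). In either case any $m_\infty$ in the limiting disk gives $\theta+m_\infty\phi\in L^2((c,b);r\,dt)$; in the limit-circle case this forces both $\theta$ and $\phi$, hence all solutions, to be $L^2$ near $b$, whereas in the limit-point case it produces a solution unique up to constant multiples (a second independent $L^2$ solution would make $\phi$ itself $L^2$, a contradiction). This establishes alternative $(i)$ versus the ``precisely one'' clause of $(ii)$ for each fixed nonreal $z$.

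The step I expect to be the main obstacle, and the one that actually delivers the phrase ``for every $z\in\bbC$'', is the $z$-independence of the dichotomy. The key lemma is: if all solutions of $(\tau-z_0)u=0$ lie in $L^2$ near $b$ for one $z_0$, then the same holds for every $z$. To prove it I would take a fundamental system $u_1,u_2$ of $(\tau-z_0)u=0$ with $W(u_1,u_2)=1$, both $L^2$ near $b$, and write any solution $v$ of $(\tau-z)v=0$ by variation of parameters as $v(x)=c_1u_1(x)+c_2u_2(x)+(z-z_0)\int_c^x[u_1(t)u_2(x)-u_1(x)u_2(t)]\,v(t)\,r(t)\,dt$. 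Applying the Cauchy--Schwarz inequality to the integral term in the $L^2((c,x);r\,dt)$ norm produces an integral inequality for $\int_c^x|v|^2 r\,dt$ whose kernel is integrable precisely because $u_1,u_2\in L^2((c,b);r\,dt)$; Gronwall's inequality then bounds $\int_c^b|v|^2 r\,dt<\infty$, so $v$ is $L^2$ near $b$. Combined with the nonreal-$z$ analysis, this shows that being in the limit-circle case is a property of the endpoint alone, independent of $z$, and hence (taking contrapositives for the limit-point case) that alternatives $(i)$ and $(ii)$ each hold simultaneously for all $z$. All the manipulations above---constancy of the Wronskian of solutions, the Green identity, and the variation-of-parameters formula---are exactly those recorded in Section \ref{s2} for the four-coefficient/quasi-derivative setting, so the passage from the classical scalar theory is routine, with the Gronwall estimate being the only genuinely delicate point.
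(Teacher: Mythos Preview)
The paper does not supply its own proof of Theorem \ref{t2.5}; it is stated as a citation to \cite[Section~4]{EGNT13}, consistent with the paper's remark that ``with the exception of Theorems \ref{t2.18} and \ref{t2.20} and part of Corollary \ref{c2.24}, all of the results in this section may be found with complete proofs in \cite{EGNT13}.'' Your outline is the classical Weyl nested-disk argument transcribed verbatim into the quasi-derivative formalism, and it is correct; this is exactly the route taken in the cited reference, so there is nothing to contrast.

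One small clarification on your Gronwall step: the inequality you obtain for $V(x)=\int_c^x|v|^2 r\,dt$ is not literally of Gronwall type unless you first arrange the coefficient in front of $V(x)$ to be less than one. The standard fix is to move $c$ close enough to $b$ so that $|z-z_0|\,\|u_1\|_{L^2((c,b);r\,dt)}\,\|u_2\|_{L^2((c,b);r\,dt)}<\tfrac12$ (say), after which the integral inequality yields a uniform bound on $V(x)$ for $x\in(c,b)$ by absorption rather than by Gronwall iteration. This is routine and doubtless what you intended, but it is worth stating since a direct Gronwall on the raw inequality does not close.
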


Weyl's alternative yields the following limit circle/limit point classification of $\tau$ at an interval endpoint.

\begin{definition} \lb{d2.6} 
Assume Hypothesis \ref{h2.1}. \\[1mm]  
In case $(i)$ in Theorem \ref{t2.5}, $\tau$ is said to be in the \textit{limit circle case} at $b$ $($resp., at $a$$)$. \\[1mm] 
In case $(ii)$ in Theorem \ref{t2.5}, $\tau$ is said to be in the \textit{limit point case} at $b$ $($resp., at $a$$)$. \\[1mm]
If $\tau$ is in the limit circle case at $a$ and $b$ then $\tau$ is also called \textit{quasi-regular} on $(a,b)$. 
\end{definition}

The differential expression $\tau$ gives rise to linear operators in the Hilbert space $L^2((a,b);r\,dx)$ equipped with the inner product
\begin{equation}
(f,g)_{L^2((a,b); r\,dx)} = \int_a^b r(x)\, dx\, \overline{f(x)}g(x),\quad f,g\in L^2((a,b);r\,dx).
\end{equation}
The \textit{maximal operator} associated to $\tau$ is denoted by $T_{max}$ and is defined by
\begin{align}
&T_{max}f = \tau f,\\
&f\in \dom(T_{max})=\big\{g\in L^2((a,b);r\,dx)\,\big|\, g\in \mathfrak{D}_{\tau}((a,b)),\, \tau g\in L^2((a,b);r\,dx)\big\}.\no
\end{align}
The Wronskian of any two functions $f,g\in \dom(T_{max})$ possesses finite boundary values at the endpoints of $(a,b)$; that is, the following limits exist and are finite:
\begin{equation}\lb{2.9a}
W(f,g)(a) := \lim_{x\downarrow a}W(f,g)(x),\quad W(f,g)(b) := \lim_{x\uparrow b}W(f,g)(x).
\end{equation}
In particular,
\begin{equation}\lb{2.10a}
\text{$W(f,g)$ is bounded on $(a,b)$ for every $f,g\in \dom(T_{max})$.}
\end{equation}
The \textit{pre-minimal operator} associated to $\tau$ is denoted by $\dot T$ and is defined by
\begin{align}
& \dot T f = \tau f,\lb{2.10}\\
& f \in \dom\big(\dot T\big) = \big\{ g\in\dom(T_{max}) \,|\, g \text{ has compact support in }(a,b)\big\}.\no
\end{align}
By \cite[Theorem 3.4]{EGNT13}, $\dot T$ is densely defined and symmetric in $L^2((a,b);r\,dx)$ with $\big(\dot T\big)^*=T_{max}$.  The \textit{minimal operator} associated to $\tau$ is denoted by $T_{min}$ and is defined to be the closure of the pre-minimal operator:
\begin{equation}
T_{min} := \overline{\dot T}.     \lb{2.12}
\end{equation}
In addition, $T_{min}$ and $T_{max}$ are adjoint to one another:
\begin{equation}\lb{2.9}
T_{min}^*=T_{max}\quad \text{and}\quad T_{max}^*=T_{min},
\end{equation}
so that $T_{min}$ is a closed symmetric operator.

The minimal operator $T_{min}$ has deficiency indices $(d,d)$ with $d\in \{0,1,2\}$; that is,
\begin{equation}
\dim \big(\ker(T_{max}\pm I_{L^2((a,b); r\,dx)})\big) = d,
\end{equation}
where $d$ is the number of limit circle endpoints for $\tau$.  In particular, $T_{min}$ has self-adjoint extensions.  The adjoint relations in \eqref{2.9} imply that a self-adjoint operator $T$ in $L^2((a,b);r\,dx)$ is an extension of $T_{min}$ if and only if $T$ is a restriction of $T_{max}$.  Therefore, every self-adjoint extension of $T_{min}$ is uniquely characterized by its domain.  Parametrizations of the self-adjoint extensions of $T_{min}$ are given in terms of Wronskians and linear functionals in \cite[Section 5]{EGNT13} and \cite[Section 6]{EGNT13}, respectively.  Below in Theorem \ref{t2.20}
 we shall present an alternative characterization in terms of principal/nonprincipal solutions and generalized boundary values when $T_{min}$ is lower semibounded.

One recalls that a symmetric operator $S$ in a Hilbert space $(\cH,(\,\cdot\,,\,\cdot\,)_{\cH})$ is \textit{lower semibounded} (or \textit{bounded from below}) if there exists $\lambda_0\in \bbR$ such that
\begin{equation}
(u,Su)_{\cH}\geq \lambda_0(u,u)_{\cH},\quad u\in \dom(S).
\end{equation}
In this case, $\lambda_0$ is called a \textit{lower bound for} $S$, we write $S\geq \lambda_0 I_{\cH}$, and the \textit{greatest lower bound} of $S$ is defined to be
\begin{equation}
\lambda_0(S):= \inf_{0\neq u\in \dom(S)} \frac{(u,Su)_{\cH}}{(u,u)_{\cH}}.
\end{equation}
The existence of principal and nonprincipal solutions is closely connected to oscillation theory for $\tau-\lambda$.  Therefore, we recall the following definition.

\begin{definition} \lb{d2.7} 
Suppose Hypothesis \ref{h2.1} holds and let $\lambda \in \bbR$.  The differential expression $\tau-\lambda$ is called {\it oscillatory at $a$} $($resp., $b$$)$ if some solution of $(\tau - \lambda) u = 0$ has infinitely many zeros accumulating at $a$ $($resp., $b$$)$; otherwise, $\tau-\lambda$ is called {\it nonoscillatory at $a$} $($resp., $b$$)$.
\end{definition}

\begin{theorem}[{\cite[Theorem 11.4]{EGNT13}}] \lb{t2.8}

Assume Hypothesis \ref{h2.1} and let $\lambda \in \bbR$ be fixed.  If $\tau-\lambda$ is nonoscillatory at $b$, then there exists a real-valued solution $u_b(\lambda,\dott)$ of $(\tau - \lambda) u = 0$ satisfying the following properties $(i)$--$(iii)$ in which $\hatt u_b(\lambda,\dott)$ denotes an arbitrary real-valued solution of $(\tau - \lambda) u = 0$ linearly independent of $u_b(\lambda,\dott)$.\\[1mm]
$(i)$ $u_b(\lambda,\dott)$ and $\hatt u_b(\lambda,\dott)$ satisfy the limiting relation
\begin{equation}\lb{2.15a}
\lim_{x\uparrow b}\frac{u_b(\la,x)}{\hatt u_b(\la,x)}=0.
\end{equation}
$(ii)$  $u_b(\lambda,\dott)$ and $\hatt u_b(\lambda,\dott)$ satisfy
\begin{equation}\lb{2.16a}
\int ^b dx \, |p(x)|^{-1}\hatt u_b(\la,x)^{-2}<\infty \, \text{ and } \, 
\int ^b dx \, |p(x)|^{-1}u_b(\la,x)^{-2}=\infty.   
\end{equation}
$(iii)$ Suppose $x_0\in(a,b)$ strictly exceeds the largest zero, if any, of $u_b(\lambda,\dott)$, and 
$\hatt u_b(\la,x_0)\neq 0$.  If $\hatt u_b(\la,x_0)/u_b(\la,x_0)>0$, then $\hatt u_b(\lambda,\dott)$ has no 
$($resp., exactly one$)$ zero in $(x_0,b)$ if $W(u_b(\lambda,\dott), \hatt u_b(\lambda,\dott)) > 0$ $($resp., 
$W(u_b(\lambda,\dott), \hatt u_b(\lambda,\dott)) < 0$$)$.  On the other hand, 
if $\hatt u_b(\la,x_0)/u_b(\la,x_0)<0$, then $\hatt u_b$ has no $($resp., exactly one$)$ 
zero in $(x_0,b)$ if $W(u_b(\lambda,\dott), \hatt u_b(\lambda,\dott)) < 0$ 
$($resp., $W(u_b(\lambda,\dott), \hatt u_b(\lambda,\dott)) > 0$$)$.
\end{theorem}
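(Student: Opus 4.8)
The plan is to identify $u_b(\la,\dott)$ with the \emph{principal} (recessive) solution of $(\tau-\la)u=0$ at $b$ and to extract all of $(i)$--$(iii)$ from a single reduction-of-order identity. First I would use nonoscillation of $\tau-\la$ at $b$ to produce a real-valued solution $v$ of $(\tau-\la)u=0$ together with a point $c\in(a,b)$ such that $v>0$ on $(c,b)$: any nontrivial real solution has only finitely many zeros clustering at $b$, hence possesses a largest zero $x_1<b$ and is of one fixed sign on $(x_1,b)$, and after multiplication by $\pm1$ we may take this sign to be positive. The computational engine is the observation that, for any two solutions $v,w\in\mathfrak{D}_{\tau}((a,b))$, one has $W(v,w)=v w^{[1]}-v^{[1]}w=p\,(vw'-v'w)=p\,v^2\,(w/v)'$ wherever $v\neq 0$; since $p>0$ a.e.\ and $W(v,w)$ is constant for two solutions, this integrates to
\[
\frac{w}{v}(x)=\frac{w}{v}(x_0)+W(v,w)\int_{x_0}^{x}\frac{dt}{p(t)\,v(t)^2},\quad x,x_0\in(c,b),
\]
the integral being well defined because $p^{-1}\in L^1_{loc}$ and $v$ is continuous and nonvanishing on $(c,b)$.

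Next I would perform the dichotomy governed by the convergence of $I:=\int^{b}dt\,p(t)^{-1}v(t)^{-2}$. If $I=\infty$, I set $u_b:=v$; if $I<\infty$, I set $u_b(x):=v(x)\int_x^{b}dt\,p(t)^{-1}v(t)^{-2}$, which is again a solution (being a linear combination of $v$ and its reduction-of-order partner) and satisfies $u_b/v\to 0$ as $x\uparrow b$. In both cases a direct check gives $\int^{b}dt\,p(t)^{-1}u_b(t)^{-2}=\infty$: when $I=\infty$ this is the hypothesis, and when $I<\infty$ the substitution $\rho(x):=\int_x^{b}dt\,p^{-1}v^{-2}$ (so that $u_b=v\rho$ and $p^{-1}u_b^{-2}=-\rho'\rho^{-2}=(\rho^{-1})'$) yields $\int_x^{b}dt\,p^{-1}u_b^{-2}=\lim_{t\uparrow b}\rho(t)^{-1}-\rho(x)^{-1}=\infty$, since $\rho\downarrow 0$. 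Property $(i)$ is then immediate: for any real solution $\hatt u_b$ linearly independent of $u_b$ one has $W(u_b,\hatt u_b)\neq 0$, and the displayed ratio formula with $v$ replaced by $u_b$ forces $\hatt u_b/u_b\to\pm\infty$, i.e., $u_b/\hatt u_b\to 0$.

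For the remaining half of $(ii)$ I would again use the ratio formula with $v$ replaced by $u_b$: setting $\widetilde\rho(x):=\int^{x}dt\,p(t)^{-1}u_b(t)^{-2}$, which diverges to $+\infty$ as $x\uparrow b$ by the previous step, any independent solution has the form $\hatt u_b=u_b(\gamma+\delta\widetilde\rho)$ near $b$ with $\delta=W(u_b,\hatt u_b)\neq 0$; then $p^{-1}\hatt u_b^{-2}=\widetilde\rho\,'(\gamma+\delta\widetilde\rho)^{-2}$ integrates to $-[\delta(\gamma+\delta\widetilde\rho)]^{-1}$, which converges as $\widetilde\rho\to\infty$, yielding $\int^{b}dt\,p^{-1}\hatt u_b^{-2}<\infty$. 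Finally, $(iii)$ is a sign-counting argument: on $(x_0,b)$, where $u_b$ has no zeros, the ratio $\phi:=\hatt u_b/u_b$ satisfies $\phi'=W(u_b,\hatt u_b)/(p\,u_b^{2})$, whose sign is constant and equal to $\sgn W(u_b,\hatt u_b)$ since $p\,u_b^{2}>0$; hence $\phi$ is strictly monotone on $(x_0,b)$ with $\phi\to\pm\infty$. Because the zeros of $\hatt u_b$ in $(x_0,b)$ are exactly the zeros of $\phi$, strict monotonicity together with the prescribed signs of $\phi(x_0)=\hatt u_b(\la,x_0)/u_b(\la,x_0)$ and of $W(u_b,\hatt u_b)$ determines whether $\phi$ crosses zero exactly once or not at all, reproducing the four cases in the statement.

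I would expect the construction and the convergence/divergence dichotomy of the second paragraph to be the main obstacle, since it is there that one must verify that $u_b$ is a genuinely well-defined nontrivial solution and that the reduction-of-order and substitution computations remain valid under the low regularity $p^{-1},q,r,s\in L^1_{loc}((a,b);dx)$ and the quasi-derivative formalism; once the identity $\int^{b}dt\,p^{-1}u_b^{-2}=\infty$ is secured, properties $(i)$--$(iii)$ follow by the elementary monotonicity arguments above.
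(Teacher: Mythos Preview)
The paper does not supply its own proof of this theorem; it merely quotes the statement from \cite[Theorem~11.4]{EGNT13} and refers the reader there. Your argument is correct and is precisely the classical Leighton--Morse/Hartman construction of the principal solution via reduction of order (the very construction later recorded in the paper as Theorem~\ref{t2.10}): produce a nonvanishing solution near $b$, apply the dichotomy on $\int^b p^{-1}v^{-2}\,dx$ to manufacture $u_b$ with $\int^b p^{-1}u_b^{-2}\,dx=\infty$, and then read off $(i)$--$(iii)$ from the monotone ratio $\phi=\hatt u_b/u_b$ satisfying $\phi'=W(u_b,\hatt u_b)/(p\,u_b^2)$.

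Two minor points worth making explicit in a clean write-up: first, in the case $I<\infty$ you should note that $u_b=v\rho>0$ on $(c,b)$ (since $p>0$ a.e.\ forces $\rho>0$), so $u_b$ has no zeros near $b$ and the subsequent integrals and ratios are well defined; second, the identity $W(v,w)=p\,v^2(w/v)'$ relies on the cancellation of the $s$-terms in $v w^{[1]}-v^{[1]}w=p(vw'-v'w)$, which holds pointwise a.e.\ under Hypothesis~\ref{h2.1} without any extra regularity. With these two remarks in place your proof is complete and matches the standard one.
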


A result analogous to Theorem \ref{t2.8}
 holds if $\tau-\lambda$ is nonoscillatory at $a$.  That is, one can establish the existence of a distinguished real-valued solution $u_a(\lambda,\dott) \neq 0$ of $(\tau - \lambda) u = 0$ which satisfies the following analog to \eqref{2.15a}:  If $\hatt u_a(\lambda,\dott)$ is any real-valued solution of $(\tau - \lambda)u = 0$ linearly independent of $u_a(\lambda,\dott)$, then
\begin{equation}
\lim_{x\downarrow a}\frac{u_a(\la,x)}{\hatt u_a(\la,x)}=0.
\end{equation}
Analogs of item $(ii)$ and $(iii)$ of Theorem \ref{t2.8}
 subsequently hold for $u_a(\lambda,\dott)$ and any real-valued solution $\hatt u_a(\lambda,\dott)$ linearly independent of $u_a(\lambda,\dott)$.

\begin{definition} \lb{d2.9} 
Assume Hypothesis \ref{h2.1} and suppose that $\lambda \in \bbR$.  If $\tau-\lambda$ is nonoscillatory at $c\in \{a,b\}$, then a nontrivial real-valued solution $u_c(\lambda,\dott)$ of $(\tau - \lambda) u = 0$ which satisfies 
\begin{equation}
\lim_{\substack{x\rightarrow c \\ x\in (a,b)}} \frac{u_c(\la,x)}{\hatt u_c(\la,x)}=0
\end{equation}
 for any other linearly independent real-valued solution $\hatt u_c(\lambda,\dott)$ of $(\tau - \lambda) u = 0$ is called a {\it principal solution} of $(\tau - \lambda) u = 0$ at $c$.  A real-valued solution of $(\tau - \lambda) u = 0$ linearly independent of a principal solution at $c$ is called a {\it nonprincipal solution} of $(\tau - \lambda) u = 0$ at $c$.
\end{definition}

If $\tau-\lambda$ is nonoscillatory at $c\in \{a,b\}$, one verifies that a principal solution at $c$ is unique up to constant multiples.  The following theorem provides a canonical method to construct principal/nonprincipal solutions.

\begin{theorem}[{\cite[Theorem 11.6]{EGNT13}}] \lb{t2.10}

Assume Hypothesis \ref{h2.1} and suppose $\tau-\lambda$ is nonoscillatory at $b$.  Let $u(\lambda,\dott) \neq 0$ be a real-valued solution of $(\tau - \lambda) u = 0$ and let $x_0\in (c,b)$ strictly exceed its last zero.  Then 
\begin{equation}\lb{11.10.13}
\hatt u_b(\la,x)=u(\la,x)\int_{x_0}^x \frac{dx'}{p(x')u(\la,x')^2}, \quad x\in (x_0,b),
\end{equation}
is a nonprincipal solution of $(\tau - \lambda) u = 0$ on $(x_0,b)$.  If, on the other hand, $u(\lambda,\dott)$ is a nonprincipal solution of $(\tau - \lambda) u = 0$, then
\begin{equation}\lb{11.10.14}
u_b(\la,x)=u(\la,x)\int_{x}^b \frac{dx'}{p(x')u(\la,x')^2}, \quad x\in (x_0,b),
\end{equation}
is a principal solution of $(\tau - \lambda) u = 0$ on $(x_0,b)$.  Analogous results hold at $a$.
\end{theorem}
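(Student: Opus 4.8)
The plan is to obtain both formulas from the classical reduction-of-order device, adapted to the modified quasi-derivative $f^{[1]} = p[f' + sf]$, and then to read off the principal/nonprincipal dichotomy from the integral criterion supplied by Theorem \ref{t2.8}. First I would put that criterion into usable form. Since $\tau - \la$ is nonoscillatory at $b$, Theorem \ref{t2.8} furnishes the principal solution $u_b(\la,\dott)$ with $\int^b dx\, p(x)^{-1} u_b(\la,x)^{-2} = \infty$, while every real-valued solution linearly independent of it satisfies the convergent estimate. Because the solution space of $(\tau - \la)u = 0$ is two-dimensional and the principal solution is unique up to constant multiples (Definition \ref{d2.9}), this yields the clean equivalence: a nontrivial real-valued solution $w$ is principal at $b$ if and only if $\int^b dx\, p(x)^{-1} w(x)^{-2} = \infty$, and nonprincipal at $b$ if and only if this integral converges (recall $p > 0$ a.e., so $p^{-1} = |p|^{-1}$). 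This is the device that classifies the two constructed solutions.

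Next comes the reduction-of-order verification. Writing $I(x) = \int_{x_0}^x dx'\, [p(x') u(\la,x')^2]^{-1}$, which is well defined and strictly increasing on $(x_0,b)$ since $u(\la,\dott)$ has no zero there and $p^{-1} \in L^1_{loc}$, I would set $\hatt u_b = u I$ and compute directly. From $I' = (p u^2)^{-1}$ one finds $\hatt u_b^{[1]} = I u^{[1]} + u^{-1}$, whence $W(u, \hatt u_b) \equiv 1$. Differentiating once more and inserting the relation $(u^{[1]})' = s u^{[1]} + (q - \la r) u$ (i.e., that $u$ solves $(\tau - \la)u = 0$), the term $I' u^{[1]} - u'/u^2$ collapses to $s/u$, and the remaining contributions assemble into $(\hatt u_b^{[1]})' = s \hatt u_b^{[1]} + (q - \la r) \hatt u_b$; that is, $(\tau - \la)\hatt u_b = 0$. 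Membership $\hatt u_b \in \mathfrak{D}_{\tau}((x_0,b))$ is clear, as $u I$ and $I u^{[1]} + u^{-1}$ are sums and products of $AC_{loc}$ functions on $(x_0,b)$, and the nonzero constant Wronskian shows $\hatt u_b$ is linearly independent of $u$.

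To finish, I would evaluate the two integrals by telescoping. Since $I' = (pu^2)^{-1}$, one has $[p \hatt u_b^2]^{-1} = I'/I^2 = -(1/I)'$, so for any $x_1 \in (x_0,b)$, $\int_{x_1}^b dx\, [p(x)\hatt u_b(x)^2]^{-1} = I(x_1)^{-1} - \lim_{x \uparrow b} I(x)^{-1} \le I(x_1)^{-1} < \infty$, and the criterion makes $\hatt u_b$ nonprincipal. For the second formula, the assumption that $u$ is nonprincipal gives $\int^b dx\, (p u^2)^{-1} < \infty$ by the criterion, so $J(x) = \int_x^b dx'\, (p u^2)^{-1}$ is finite, strictly decreasing, and tends to $0$ as $x \uparrow b$. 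Moreover $u_b = uJ$ is a solution, being the linear combination $C u - \hatt u_b$ with $C = \int_{x_0}^b dx'\,(p u^2)^{-1}$, and is linearly independent of $u$ since $W(u,u_b) = -1$. Then $[p u_b^2]^{-1} = -J'/J^2 = (1/J)'$ gives $\int_{x_1}^b dx\, [p u_b^2]^{-1} = \lim_{x\uparrow b} J(x)^{-1} - J(x_1)^{-1} = \infty$, so $u_b$ is principal. The assertions at $a$ follow verbatim after the reflection $x \mapsto a + b - x$, or equivalently from the symmetric version of Theorem \ref{t2.8}.

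The main obstacle I anticipate is purely the reduction-of-order computation in the four-coefficient framework: one must confirm that the $s$-dependent cross terms cancel correctly so that $uI$ again solves $(\tau-\la)u=0$ with respect to the modified quasi-derivative $f^{[1]} = p[f'+sf]$. Once that cancellation is secured and the equivalence from Theorem \ref{t2.8} is in hand, the classification of the two solutions reduces to the elementary telescoping estimates above.
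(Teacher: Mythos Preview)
Your proof is correct. The paper itself does not prove this statement---it is quoted from \cite[Theorem~11.6]{EGNT13} (see the opening paragraph of Section~\ref{s2})---and your argument is the classical one: reduction of order adapted to the quasi-derivative $f^{[1]}=p[f'+sf]$ (this is exactly Lemma~\ref{l4.1}, likewise stated in the paper without proof), followed by the integral characterization in Theorem~\ref{t2.8}\,$(ii)$ combined with the telescoping identities $(p\,\hatt u_b^{\,2})^{-1}=-(1/I)'$ and $(p\,u_b^{2})^{-1}=(1/J)'$. The cancellation of the $s$-dependent cross terms that you flag as the only delicate point is indeed routine once one substitutes $u'=p^{-1}u^{[1]}-su$, and your sketch handles it accurately.
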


Lower semiboundedness of $\dot T$ (equivalently, $T_{min}$ and its self-adjoint extensions) is a consequence of nonoscillatory behavior near the endpoints of the interval $(a,b)$, as the following theorem illustrates.

\begin{theorem}[{\cite[Theorem 11.9]{EGNT13}}] \lb{t2.11}

Assume Hypothesis \ref{h2.1}.  Suppose there exist $\lambda_a,\lambda_b\in \bbR$ such that $\tau-\lambda_a$ is nonoscillatory at $a$ and $\tau-\lambda_b$ is 
nonoscillatory at $b$.\ Then $\dot T$ and hence any self-adjoint extension of the minimal operator $T_{min}$ is lower semibounded.
\end{theorem}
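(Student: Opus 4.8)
The plan is to bound the quadratic form of $\dot T$ from below and then transfer the bound to $T_{min}=\overline{\dot T}$ and to all self-adjoint extensions. For real-valued $f\in\dom(\dot T)$, an integration by parts (the boundary terms vanishing by compact support) gives the form representation
\[
\big(f,\dot T f\big)_{L^2((a,b);r\,dx)}=\int_a^b\big[p\,(f'+sf)^2+q\,f^2\big]\,dx=:Q(f),
\]
and the general complex case follows by splitting into real and imaginary parts, since both $Q$ and $\|\cdot\|_{L^2((a,b);r\,dx)}^2$ are additive over these components. The goal is to produce $\mu\in\bbR$ with $Q(f)\ge\mu\int_a^b r\,f^2\,dx$. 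The computational engine is a Picone/Riccati factorization: if $u>0$ is a real solution of $(\tau-\mu)u=0$ on all of $(a,b)$, then writing $f=uh$ (so that $f'+sf=u\,h'+(u^{[1]}/p)\,h$), integrating the cross term $2\,u^{[1]}u\,h\,h'$ by parts, and using $(\tau-\mu)u=0$ to rewrite $(q-\mu r)u^2$ through $u^{[1]}$, one finds that every term not involving $h'$ cancels identically, leaving
\[
Q(f)-\mu\int_a^b r\,f^2\,dx=\int_a^b p\,u^2\big((f/u)'\big)^2\,dx\ge 0 .
\]
Hence it suffices to exhibit, for some $\mu\in\bbR$, a strictly positive solution of $(\tau-\mu)u=0$ on the whole interval $(a,b)$; note this is precisely the implication $(iv)\Rightarrow(ii)$ recorded later in Theorem \ref{t1.2}.

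To construct such a global positive solution I would first reduce to a single spectral parameter: by standard Sturm comparison, decreasing $\lambda$ enlarges $q-\lambda r$ and therefore preserves nonoscillation, so there is a single $\lambda_0\le\min(\lambda_a,\lambda_b)$ for which $\tau-\lambda_0$ is nonoscillatory at both $a$ and $b$. By Theorem \ref{t2.8} and its analogue at $a$, the principal solutions $u_a(\lambda_0,\dott)$ and $u_b(\lambda_0,\dott)$ have only finitely many zeros near the respective endpoints, hence are strictly positive past their last zeros: $u_a(\lambda_0,\dott)>0$ on $(a,c_0)$ and $u_b(\lambda_0,\dott)>0$ on $(d_0,b)$. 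This makes $\tau-\lambda_0$ disconjugate on each end-neighborhood (cf.\ Definition \ref{d2.16}), a property that persists for every $\mu\le\lambda_0$; on a fixed compact core $[c_1,d_1]$ with $a<c_1<c_0$, $d_0<d_1<b$, the expression $\tau$ is regular, so the associated operator is lower semibounded with finite lowest Dirichlet eigenvalue. Given a value $\mu$ below the infimum of the Dirichlet eigenvalues over an exhausting family of compact subintervals $[c_n,d_n]\uparrow(a,b)$, each such subinterval carries a positive solution $u_n$; normalizing $u_n$ at a fixed interior point and invoking continuous dependence of solutions (Theorem \ref{t2.3}) produces, along a subsequence, a nonnegative limit solution $u$ on $(a,b)$ that is positive at that point, and a nonnegative solution cannot have a zero without having a double zero, which by uniqueness would force $u\equiv 0$; thus $u>0$ on $(a,b)$.

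The main obstacle is to guarantee that a usable $\mu$ exists, i.e.\ that $\inf_n\lambda_1^D([c_n,d_n])>-\infty$; this is exactly where nonoscillation at the endpoints is indispensable. The natural route is a localization (IMS-type) estimate: for a Dirichlet eigenfunction on $[c_n,d_n]$, a smooth partition of unity subordinate to the cover $\{(a,c_0),[c_1,d_1],(d_0,b)\}$ splits $Q$ into pieces, the two end pieces being controlled by the factorization identity (with $u_a$, $u_b$), the core piece by the regular lower bound, at the cost of a localization error $\int_a^b p\big(\sum_j(\chi_j')^2\big)f^2\,dx$ supported on two fixed compact transition regions. Absorbing this error requires a relative-boundedness estimate on a compact $K\subset(a,b)$ of the form $\|g\|_{L^\infty(K)}^2\le\varepsilon\int_K p\,(g'+sg)^2\,dx+C_{K,\varepsilon}\int_K r\,g^2\,dx$, obtained from the identity $(g^2)'=2g(g'+sg)-2sg^2$ together with $p^{-1},s,r\in L^1(K)$; carrying the quasi-derivative term $sg$ through a completion of the square and handling the merely locally integrable, possibly degenerate coefficients is the genuine technical difficulty, and is the point at which the argument is most delicate. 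Once the uniform lower bound is in hand, the preceding construction supplies a global positive solution and the Picone identity yields $\dot T\ge\mu\,I_{L^2((a,b);r\,dx)}$. Passing to the closure gives $T_{min}=\overline{\dot T}\ge\mu\,I_{L^2((a,b);r\,dx)}$, and since $T_{min}$ has finite deficiency indices $(d,d)$ with $d\in\{0,1,2\}$, every self-adjoint extension\dash differing from the Friedrichs extension by a finite-dimensional boundary perturbation\dash is likewise lower semibounded.
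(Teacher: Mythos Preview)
The paper does not supply its own proof of this statement; it is cited as \cite[Theorem~11.9]{EGNT13}. What the paper does record, immediately afterward in Corollary~\ref{c2.15}, is the shape of the argument one expects: lower semiboundedness is equivalent to the existence of functions $g_a,g_b$ positive near $a,b$ satisfying the differential inequalities \eqref{11.10.55}, and principal solutions of $(\tau-\lambda_a)u=0$ and $(\tau-\lambda_b)u=0$ supply exactly such functions (with equality). The standard route is then to use the Jacobi factorization \eqref{4.12a} locally near each endpoint together with a regular bound on a fixed compact core, glued by a partition-of-unity argument, to obtain the quadratic-form lower bound for $\dot T$ directly.

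Your proposal contains all of these ingredients but arranges them circuitously. You reduce the problem to producing a \emph{global} positive solution of $(\tau-\mu)u=0$, then reduce that to a uniform lower bound on the Dirichlet eigenvalues $\lambda_1^D([c_n,d_n])$ over an exhausting sequence, and finally propose an IMS localization to obtain that uniform bound. But the IMS localization you describe, applied directly to an $f\in\dom(\dot T)$ (which already has compact support, hence lies in the form domain of some $T_{F,(c_n,d_n)}$), yields $\big(f,\dot T f\big)\ge\mu\|f\|^2$ without ever needing the global positive solution or the compactness/limiting construction of $u$. In other words, your step ``prove the uniform Dirichlet bound'' is already the full proof, and the surrounding scaffolding (global positive solution, normalized limits of $u_n$) is superfluous.

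Two smaller cautions. First, when you invoke ``the expression $\tau$ is regular, so the associated operator is lower semibounded'' on the compact core, be aware that in the paper's logical order this is Corollary~\ref{c2.12}, which is itself deduced from the theorem you are proving; you would need an independent argument in the regular case (e.g., via $\|f\|_\infty$ control from the quasi-derivative and $q\in L^1$). Second, the localization error $\int p\big(\sum_j(\chi_j')^2\big)f^2\,dx$ involves $p$, not $p^{-1}$, and $p$ may be unbounded under Hypothesis~\ref{h2.1}; the relative-boundedness estimate you sketch therefore needs to control $\int_K p\,f^2$ by the form, which is genuinely delicate with merely $p^{-1},r\in L^1_{loc}$ and is not the same as the $L^\infty$ bound you wrote down.
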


When $\tau$ is regular on $(a,b)$, $\tau $ is nonoscillatory at both $a$ and $b$.  Thus, Theorem \ref{t2.11}
 immediately yields the following corollary.

\begin{corollary}[{\cite[Corollary 11.10]{EGNT13}}] \lb{c2.12}

Assume Hypothesis \ref{h2.1}.  If $\tau$ is regular 
on $(a,b)$, then $\dot T$ and hence every self-adjoint extension of $T_{min}$ 
is bounded from below.
\end{corollary}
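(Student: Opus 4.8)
The plan is to verify the hypotheses of Theorem \ref{t2.11} in the regular case; since Theorem \ref{t2.11} only requires the \emph{existence} of reals $\lambda_a,\lambda_b$ for which $\tau-\lambda_a$ is nonoscillatory at $a$ and $\tau-\lambda_b$ is nonoscillatory at $b$, it suffices to take $\lambda_a=\lambda_b=0$ and show that $\tau$ itself is nonoscillatory at both endpoints. By Definition \ref{d2.7}, this amounts to proving that no nontrivial solution of $\tau u = 0$ can have zeros accumulating at $a$ or at $b$. Once this is established, Theorem \ref{t2.11} immediately yields that $\dot T$, and hence every self-adjoint extension of $T_{min}$, is bounded from below, which is the assertion of the corollary.

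The heart of the argument is the nonaccumulation of zeros at $a$ (the case of $b$ being entirely analogous). Because $\tau$ is regular, Definition \ref{d2.2} gives $p^{-1},q,r,s\in L^1((a,b);dx)$ with $a$ finite, so by \eqref{2.6a} every $u\in \mathfrak{D}_{\tau}((a,b))$ and its quasi-derivative $u^{[1]}=p[u'+su]$ extend continuously to the closed endpoint; in fact $u,u^{[1]}\in AC([a,b])$, since $\tau u = 0$ gives $\big(u^{[1]}\big)' = s u^{[1]} + q u \in L^1$ and $u' = u^{[1]}/p - s u \in L^1$. Suppose, for contradiction, that a nontrivial solution $u$ of $\tau u = 0$ has zeros $x_n\downarrow a$; then $u(a)=\lim_{n}u(x_n)=0$ by continuity. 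To also force $u^{[1]}(a)=0$, I would introduce the integrating factor $F(x)=\exp\!\big(\int_{x_0}^{x}s(t)\,dt\big)$ for a fixed $x_0\in(a,b)$, which is strictly positive and, since $s\in L^1$, bounded above and below away from $0$ and lies in $AC([a,b])$. Setting $g:=uF\in AC([a,b])$ one computes $g' = F(u'+su)=Fu^{[1]}/p$ a.e.\ on $(a,b)$. Since $g(x_{n+1})=g(x_n)=0$, absolute continuity gives $\int_{x_{n+1}}^{x_n}(F/p)u^{[1]}\,dt = 0$; as $F/p>0$ a.e.\ and $u^{[1]}$ is continuous, $u^{[1]}$ cannot keep a fixed sign on $(x_{n+1},x_n)$ and hence vanishes at some $\eta_n\in(x_{n+1},x_n)$. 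Letting $n\to\infty$ yields $\eta_n\downarrow a$ and, by continuity, $u^{[1]}(a)=\lim_n u^{[1]}(\eta_n)=0$. Thus $u(a)=u^{[1]}(a)=0$, and the uniqueness part of Theorem \ref{t2.3} (applied with $z=0$, $g=0$) forces $u\equiv 0$, contradicting nontriviality.

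The step I expect to be the main obstacle is precisely extracting $u^{[1]}(a)=0$ from the accumulating zeros of $u$. A naive appeal to Rolle's theorem for $u'$ is unavailable here, because under Hypothesis \ref{h2.1} the coefficients $p^{-1}$ and $s$ are merely $L^1$, so $u'$ need not be continuous and $u^{[1]}(\xi)=p(\xi)s(\xi)u(\xi)$ at a point where $u'(\xi)=0$ carries no control; the integrating-factor substitution circumvents this by transferring the zero-counting to the genuinely absolutely continuous quantity $g=uF$ and to the \emph{continuous} quasi-derivative $u^{[1]}$. Granting this nonoscillation of $\tau$ at both $a$ and $b$, the corollary follows directly from Theorem \ref{t2.11} with $\lambda_a=\lambda_b=0$.
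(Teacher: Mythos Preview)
Your approach is correct and matches the paper's: the paper simply asserts that regularity implies $\tau$ is nonoscillatory at both endpoints and then invokes Theorem~\ref{t2.11}, while you supply the details of that nonoscillation claim via the integrating-factor/Rolle-type argument. One small technical point: Theorem~\ref{t2.3} is stated for $c\in(a,b)$, not $c=a$, so to conclude $u\equiv 0$ from $u(a)=u^{[1]}(a)=0$ you should either note that in the regular case the initial-value problem is equally well-posed at the endpoint, or argue via the Wronskian (any solution $v$ linearly independent of $u$ would have constant $W(u,v)\neq 0$, yet $W(u,v)(a)=0$).
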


Theorem \ref{t2.11}
 shows that nonoscillatory behavior at the endpoints implies lower semiboundedness of $\dot T$.  To state a converse result of the form ``bounded from below implies non-oscillation,'' we recall the following definition of boundedness from below at an endpoint.

\begin{definition} \lb{d2.13}
 
Assume Hypothesis \ref{h2.1}. The operator $\dot T$ is said to be 
{\it bounded from below at $a$} if there exists $c\in (a,b)$ and $\lambda_a\in \bbR$ such that 
\begin{align}
\begin{split} 
& \big(u, \dot T u\big)_{L^2((a,b);r\,dx)}\geq \lambda_a ( u, u)_{L^2((a,b);r\,dx)}, \\ 
& \quad u\in \dom\big(\dot T\big) \, \text{such that $u\equiv 0$ on $(c,b)$}.
\end{split} 
\end{align}
Similarly, $\dot T$ is said to be {\it bounded from below at $b$} if there exists $d\in (a,b)$ and $\lambda_b\in \bbR$ such that 
\begin{align}
\begin{split} 
& \big(u, \dot T u\big)_{L^2((a,b);r\,dx)}\geq \lambda_b ( u, u)_{L^2((a,b);r\,dx)}, \\ 
& \quad u\in \dom\big(\dot T\big) \, \text{such that $u\equiv 0$ on $(a,d)$}.
\end{split} 
\end{align}
\end{definition}

\begin{theorem}[{\cite[Theorem 11.13]{EGNT13}}] \lb{t2.14}

Assume Hypothesis \ref{h2.1}.  If $\dot T$ is bounded from below at $a$, then there exists $\alpha\in \bbR$ such that for all $\lambda < \alpha$, 
$\tau-\lambda$ is nonoscillatory at $a$.  An analogous result holds if $\dot T$ is bounded from below at $b$.
\end{theorem}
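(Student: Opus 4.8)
The plan is to convert the hypothesis of lower boundedness at $a$ into a strict positivity statement for the quadratic form of $\dot T$ on functions supported near $a$, and then to invoke the classical variational characterization of (non)oscillation: a second-order expression fails to be nonoscillatory at $a$ exactly when one can ``clip'' an oscillating solution between two consecutive zeros to obtain a nonzero function on which the form of $\tau-\lambda$ vanishes.

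First I would compute the form of $\dot T$. For $u\in\dom(\dot T)$ the support is compact, so integrating by parts in $\big(u,\dot T u\big)_{L^2((a,b);r\,dx)}=\int_a^b \overline u\,[-(u^{[1]})'+s u^{[1]}+q u]\,dx$ and using $u^{[1]}=p[u'+su]$ annihilates all boundary terms and yields
\[
\big(u,\dot T u\big)_{L^2((a,b);r\,dx)}=\int_a^b \big[p\,|u'+su|^2+q\,|u|^2\big]\,dx=:\mathfrak q(u).
\]
Writing $\mathfrak q_\lambda(u):=\mathfrak q(u)-\lambda\int_a^b r\,|u|^2\,dx$ for the form attached to $\tau-\lambda$, the hypothesis that $\dot T$ is bounded from below at $a$ (Definition \ref{d2.13}) supplies $c\in(a,b)$ and $\lambda_a\in\bbR$ with $\mathfrak q(u)\ge\lambda_a\int_a^b r\,|u|^2$ for every $u\in\dom(\dot T)$ vanishing on $(c,b)$. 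I would then set $\alpha:=\lambda_a$, so that for $\lambda<\alpha$ and any such $u\neq 0$,
\[
\mathfrak q_\lambda(u)\ge(\lambda_a-\lambda)\int_a^b r\,|u|^2\,dx>0.
\]

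The heart of the argument is to show this strict positivity precludes oscillation at $a$. Arguing by contradiction, suppose $\tau-\lambda$ is oscillatory at $a$ for some $\lambda<\alpha$. Then a nontrivial real solution $u$ of $(\tau-\lambda)u=0$ has zeros accumulating at $a$; I would pick two consecutive zeros $x_1<x_2<c$ and define the clipped function $v:=u$ on $[x_1,x_2]$ and $v:=0$ off $[x_1,x_2]$. A single integration by parts on $[x_1,x_2]$, using $(\tau-\lambda)v=0$ there together with $v(x_1)=v(x_2)=0$, gives exactly $\mathfrak q_\lambda(v)=0$, while $v\not\equiv 0$. If $v$ could be inserted into the strict inequality above, we would be done immediately.

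The main obstacle is precisely that $v\notin\dom(\dot T)$: since the zeros of $u$ are simple (by initial–value uniqueness in Theorem \ref{t2.3}, $u^{[1]}(x_1)\neq 0$), the quasi-derivative $v^{[1]}$ jumps at $x_1$ and $x_2$, so $v^{[1]}\notin AC_{loc}$. I would overcome this by approximation in the form sense: regularizing the jumps of $v^{[1]}$ over shrinking neighborhoods of $x_1,x_2$ — concretely, by solving on each transition region an auxiliary inhomogeneous problem (Theorem \ref{t2.3}) matching the prescribed endpoint data — produces $v_n\in\dom(\dot T)$ supported in $(a,c)$. Because $p^{-1}\in L^1_{loc}$, the transition regions contribute $o(1)$ to $\int |v_n^{[1]}|^2 p^{-1}=\int p\,|v_n'+s v_n|^2$; and since in one dimension the form norm controls $\|\cdot\|_\infty$ on the fixed compact support, the merely $L^1_{loc}$ terms $\int q\,|v_n|^2$ and $\int r\,|v_n|^2$ converge to those of $v$. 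Hence $\mathfrak q_\lambda(v_n)\to\mathfrak q_\lambda(v)=0$, whereas strict positivity forces $\mathfrak q_\lambda(v_n)\ge(\lambda_a-\lambda)\int r\,|v_n|^2\to(\lambda_a-\lambda)\int r\,|v|^2>0$, a contradiction. Thus $\tau-\lambda$ is nonoscillatory at $a$ for every $\lambda<\alpha$, and the statement at $b$ follows by the mirror-image argument with zeros accumulating at $b$ inside $(d,b)$. I expect this approximation step — confirming that the clipped solution lies in the form closure of $\dom(\dot T)$ and that the singular coefficient $q$ does not disrupt the limit — to be the delicate point; everything else is the standard form computation and Sturmian clipping.
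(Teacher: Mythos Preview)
The paper does not supply its own proof of this result; Theorem \ref{t2.14} is quoted from \cite[Theorem~11.13]{EGNT13} along with the other background material in Section~\ref{s2}, so there is nothing here to compare against directly. Your variational/Jacobi-type argument is the standard route to such statements and is correct in outline.

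The one step you rightly flag as delicate --- manufacturing $v_n\in\dom(\dot T)$ that approximate the clipped solution $v$ in the form sense --- can be bypassed entirely by arguing at the level of the Friedrichs extension on the subinterval. Since $[x_1,x_2]\subset(a,b)$, the restriction $\tau|_{(x_1,x_2)}$ is regular, and $u|_{(x_1,x_2)}$ is a genuine eigenfunction of $T_{F,(x_1,x_2)}$ with eigenvalue $\lambda$, so $\lambda_1(T_{F,(x_1,x_2)})\le\lambda$. On the other hand, zero-extension embeds $\dom(\dot T_{(x_1,x_2)})$ into $\{u\in\dom(\dot T):u\equiv 0\text{ on }(c,b)\}$, so your inequality $\mathfrak q_\lambda(u)\ge(\lambda_a-\lambda)\|u\|^2$ gives $\dot T_{(x_1,x_2)}\ge\lambda_a I$, and this passes to $T_{F,(x_1,x_2)}$ by the form-core property that defines the Friedrichs extension. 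Thus $\lambda_a\le\lambda_1(T_{F,(x_1,x_2)})\le\lambda<\lambda_a$, a contradiction. This is exactly the mechanism the paper itself uses later (cf.\ \eqref{4.14} in the proof of Theorem~\ref{t4.3}). Your proposed explicit construction of $v_n$ via ``an auxiliary inhomogeneous problem (Theorem~\ref{t2.3})'' is imprecise as written --- Theorem~\ref{t2.3} solves initial-value problems, not two-point problems with four prescribed data --- and keeping $\int p^{-1}|v_n^{[1]}|^2$ under control on the transition layers would need more justification than you give; the Friedrichs-extension shortcut renders all of that unnecessary.
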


\begin{corollary}[{\cite[Corollary 11.14]{EGNT13}}] \lb{c2.15}

Assume Hypothesis \ref{h2.1}.  The operator $\dot T$ is bounded from below if and only if there exist $\mu\in \bbR$ and functions $g_a, g_b\in AC_{loc}((a,b))$ such that $g_a^{[1]}, g_b^{[1]}\in AC_{loc}((a,b))$, $g_a>0$ near $a$, $g_b>0$ near $b$, 
\begin{equation}\lb{11.10.55}
\begin{split}
q&\geq \mu r-s\frac{g_a^{[1]}}{g_a} + \f{\big(g_a^{[1]}\big)'}{g_a}  \, \text{ a.e.\ near $a$},\\
q&\geq \mu r-s\frac{g_b^{[1]}}{g_b} + \f{\big(g_b^{[1]}\big)'}{g_b}  \, \text{ a.e.\ near $b$}.
\end{split}
\end{equation}
\end{corollary}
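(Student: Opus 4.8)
The plan is to route the equivalence through the (non)oscillatory behavior of $\tau-\mu$ at the two endpoints, using Theorem \ref{t2.14} and Theorem \ref{t2.11} as the two halves of the bridge between semiboundedness and nonoscillation. The first thing I would record is a purely algebraic reformulation of the conditions \eqref{11.10.55}: since $g_a>0$ and $r>0$, multiplying the first inequality in \eqref{11.10.55} by $r\,g_a$ and using $\tau g_a = r^{-1}\big(-(g_a^{[1]})' + s\,g_a^{[1]} + q\,g_a\big)$ shows that it is equivalent to $(\tau-\mu)g_a \geq 0$ a.e.\ near $a$, and likewise the second is equivalent to $(\tau-\mu)g_b \geq 0$ a.e.\ near $b$. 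Thus the right-hand side of the corollary asserts precisely that $\tau-\mu$ possesses a strictly positive supersolution near each endpoint.

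For the direction ``lower semibounded $\Rightarrow$ \eqref{11.10.55},'' I would first observe that if $\dot T$ is lower semibounded then, by restricting the quadratic form to functions in $\dom(\dot T)$ supported near a single endpoint, $\dot T$ is automatically bounded from below at $a$ and at $b$ in the sense of Definition \ref{d2.13}. Theorem \ref{t2.14} then furnishes $\alpha_a,\alpha_b\in\bbR$ such that $\tau-\lambda$ is nonoscillatory at $a$ for $\lambda<\alpha_a$ and at $b$ for $\lambda<\alpha_b$. Fixing any $\mu<\min(\alpha_a,\alpha_b)$, the nonoscillation of $\tau-\mu$ at $a$ (Definition \ref{d2.7}), together with the fact that zeros of a nontrivial solution are isolated, guarantees that any nontrivial real-valued solution of $(\tau-\mu)u=0$ is free of zeros on some interval $(a,c)$ and hence of fixed sign there; after adjusting the sign this yields $g_a>0$ near $a$ with $(\tau-\mu)g_a=0$, so \eqref{11.10.55} holds near $a$ with equality. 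Repeating the argument at $b$ produces $g_b$, and the common $\mu$ completes this direction.

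For the converse, \eqref{11.10.55} $\Rightarrow$ lower semiboundedness, the essential step is the comparison principle that a strictly positive supersolution forces nonoscillation. Assuming $g_a>0$ with $(\tau-\mu)g_a\geq 0$ on an interval $(a,c)$, I would argue by contradiction: if some real-valued solution $u$ of $(\tau-\mu)u=0$ had two consecutive zeros $x_1<x_2$ in $(a,c)$, say $u>0$ on $(x_1,x_2)$ (otherwise replace $u$ by $-u$), then the Wronskian identity gives $W(g_a,u)' = \big[u\,\tau g_a - g_a\,\tau u\big] r = u\,\big[(\tau-\mu)g_a\big] r \geq 0$ a.e.\ on $(x_1,x_2)$, so $W(g_a,u)$ is nondecreasing there. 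On the other hand $u(x_1)=u(x_2)=0$ reduces the Wronskian to $W(g_a,u)(x_j)=g_a(x_j)\,u^{[1]}(x_j)$, and the quasi-derivatives satisfy $u^{[1]}(x_1)>0>u^{[1]}(x_2)$ (each is nonzero by the uniqueness Theorem \ref{t2.3}, and its sign is forced by $u>0$ between the zeros), whence $W(g_a,u)(x_1)>0>W(g_a,u)(x_2)$, contradicting monotonicity. Hence every solution has at most one zero in $(a,c)$, so $\tau-\mu$ is nonoscillatory at $a$; the same argument with $g_b$ gives nonoscillation at $b$, and Theorem \ref{t2.11} (taking $\lambda_a=\lambda_b=\mu$) yields that $\dot T$ is lower semibounded.

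The step I expect to be the main obstacle is this comparison lemma in the converse direction, where the care lies in the low-regularity ($AC_{loc}$) bookkeeping: one must justify that $W(g_a,u)$ is locally absolutely continuous so that the sign of its a.e.\ derivative integrates to monotonicity, and one must pin down the signs of the quasi-derivatives $u^{[1]}(x_j)$ at the zeros of $u$. The algebraic equivalence of \eqref{11.10.55} with $(\tau-\mu)g\geq 0$ and the forward implication are, by contrast, routine once Theorems \ref{t2.14} and \ref{t2.3} are in hand.
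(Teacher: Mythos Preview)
Your proposal is correct and follows essentially the approach implicit in the paper's organization: the statement is not proved in this paper but cited from \cite[Corollary~11.14]{EGNT13}, where it is obtained as an immediate consequence of the two results restated here as Theorems~\ref{t2.11} and~\ref{t2.14}, with (non)oscillation serving as the pivot---precisely the route you take. Your self-contained Sturm-type comparison argument for the converse (positive supersolution $\Rightarrow$ nonoscillation) is correct; the sign determination $u^{[1]}(x_1)>0>u^{[1]}(x_2)$ at consecutive zeros can indeed be justified in the $AC_{loc}$ setting by comparing $u$ against a reference solution $v$ with $v(x_j)=1$, $v^{[1]}(x_j)=0$ and integrating $(u/v)'=p^{-1}W(v,u)/v^2$, so the ``main obstacle'' you flag is genuinely routine.
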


\begin{definition}[{\cite[Definition 11.15]{EGNT13}}] \lb{d2.16}
 
Assume Hypothesis \ref{h2.1} and let $\lambda \in \bbR$.  The differential expression $\tau-\lambda$ is called \textit{disconjugate} on $(a,b)$ if every nontrivial real-valued solution of $\tau u = \lambda u$ has at most one zero in $(a,b)$.
\end{definition}

\begin{corollary}[{\cite[Corollary 11.16]{EGNT13}}] \lb{c2.17}
 
Assume Hypothesis \ref{h2.1}.  If $\dot T$ is bounded from below, then there exists $\alpha\in\R$ such that $(\tau -\lambda)$ is disconjugate for every $\lambda<\alpha$.  If $\tau$ is regular on $(a,b)$, then there exists $\alpha_0\in \bbR$, such that for $\lambda<\alpha_0$, each 
nontrivial solution to $(\tau-\lambda)u=0$ has at most one zero in the closed interval $[a,b]$.
\end{corollary}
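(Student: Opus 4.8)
The plan is to identify the threshold $\alpha$ with the greatest lower bound of $\dot T$ and to reduce disconjugacy to the strict positivity of the quadratic form associated with $\tau-\lambda$. First I would record the form representation obtained by integrating by parts: for $u\in\dom(\dot T)$ the boundary terms vanish (compact support), so that
\begin{equation}
\big(u,\dot T u\big)_{L^2((a,b);r\,dx)} = \int_a^b \big[p\,|u'+su|^2 + q\,|u|^2\big]\,dx .
\end{equation}
Writing $\lambda_0:=\lambda_0(\dot T)\in\bbR$ (finite by hypothesis) and
\begin{equation}
Q_\lambda[u]:=\int_a^b \big[p\,|u'+su|^2 + (q-\lambda r)\,|u|^2\big]\,dx ,
\end{equation}
the bound $\dot T\geq\lambda_0 I_{L^2((a,b);r\,dx)}$ gives $Q_\lambda[u]\geq(\lambda_0-\lambda)\,\|u\|_{L^2((a,b);r\,dx)}^2$ for $u\in\dom(\dot T)$. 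Since the form of $\dot T$ is closable and its closure is the form of the Friedrichs extension $T_F$ (for which $\lambda_0(T_F)=\lambda_0$), this inequality persists for all $u$ in the form domain $\cQ$ of $T_F$. I would then set $\alpha=\alpha_0:=\lambda_0$, so that $Q_\lambda[u]>0$ for every $0\neq u\in\cQ$ whenever $\lambda<\lambda_0$.

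For the first assertion I would argue by contradiction. Fix $\lambda<\alpha$ and suppose some nontrivial real-valued solution $v$ of $(\tau-\lambda)u=0$ has two zeros $a<x_1<x_2<b$. Define the cut-off
\begin{equation}
w(x)=\begin{cases} v(x), & x\in[x_1,x_2],\\ 0, & x\in(a,b)\setminus[x_1,x_2]. \end{cases}
\end{equation}
Because $v(x_1)=v(x_2)=0$, the function $w$ is continuous, compactly supported in $(a,b)$, and satisfies $w'+sw\in L^2((a,b);dx)$, so $w\in\cQ$; moreover $w\neq0$ in $L^2((a,b);r\,dx)$, since $v\equiv0$ on $[x_1,x_2]$ would force $v(x_1)=v^{[1]}(x_1)=0$ and hence $v\equiv0$ by the uniqueness part of Theorem \ref{t2.3}. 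An integration by parts on $[x_1,x_2]$, in which the boundary contribution $v\,v^{[1]}\big|_{x_1}^{x_2}$ vanishes and $(\tau-\lambda)v=0$ is used, would then give
\begin{equation}
Q_\lambda[w]=\int_{x_1}^{x_2}\big[p\,|v'+sv|^2+(q-\lambda r)\,v^2\big]\,dx=\int_{x_1}^{x_2} v\,r\,[(\tau-\lambda)v]\,dx=0 ,
\end{equation}
contradicting $Q_\lambda[w]>0$. Hence every nontrivial real solution has at most one zero in $(a,b)$; that is, $\tau-\lambda$ is disconjugate for each $\lambda<\alpha$ in the sense of Definition \ref{d2.16}.

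For the regular case I would use that, by Corollary \ref{c2.12}, $T_F$ is bounded from below, and that when $\tau$ is regular each element of the form domain $\cQ$ admits finite boundary values and $\cQ$ coincides with the Dirichlet form domain $\{u:u(a)=u(b)=0\}$ (with the appropriate integrability). Repeating the construction while allowing the two zeros $x_1<x_2$ to lie in the \emph{closed} interval $[a,b]$, the cut-off $w$ still vanishes at $a$ and $b$ and therefore remains in $\cQ$; the same integration by parts yields $Q_\lambda[w]=0$, contradicting strict positivity for $\lambda<\alpha_0$. This shows that no nontrivial solution has more than one zero in $[a,b]$ once $\lambda<\alpha_0$.

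The one step I expect to demand genuine care is the claim that the cut-off $w$ lies in the form domain $\cQ$. Its quasi-derivative $w^{[1]}$ jumps at $x_1$ and $x_2$ (unless $v^{[1]}$ vanishes there, which is excluded), so $w\notin\mathfrak{D}_{\tau}((a,b))$ and in particular $w\notin\dom(\dot T)$; the operator inequality is therefore not directly applicable. I would close this gap by a standard density/mollification argument, approximating $w$ in the form norm $\big(Q_0[\,\cdot\,]+C\,\|\,\cdot\,\|_{L^2((a,b);r\,dx)}^2\big)^{1/2}$ by elements of $\dom(\dot T)$ (respectively by Dirichlet functions in the regular case), using that $w$ is absolutely continuous with $p^{1/2}(w'+sw)\in L^2$ and compactly supported (respectively vanishing at the regular endpoints). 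Once the lower bound is transferred to $w$ by continuity, the contradiction above is complete. Alternatively, one could route the argument through the nonoscillation conclusion of Theorem \ref{t2.14}, but the variational approach above is more direct and yields the explicit threshold $\alpha=\lambda_0(\dot T)$.
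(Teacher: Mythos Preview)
Your argument is correct and complete in outline; the single point you flag---that the cutoff $w$ lies in the form domain $\cQ$---is indeed the only place requiring care, and your proposed fix (reduce to a regular subinterval $[c,d]\supset[x_1,x_2]$ and invoke the explicit description \eqref{2.15} of $\dom(Q_{F,(c,d)})$, then extend by zero) works. One small imprecision: you write $w'+sw\in L^2((a,b);dx)$, but what is actually true and needed is $p^{1/2}(w'+sw)=p^{-1/2}w^{[1]}\in L^2((a,b);dx)$, which follows since $v^{[1]}$ is continuous on $[x_1,x_2]$ and $p^{-1}\in L^1_{loc}$.

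The paper does not prove Corollary~\ref{c2.17} directly; it is quoted from \cite{EGNT13}. However, the paper's Theorem~\ref{t4.3} establishes the sharper equivalence $T_{min}\geq\lambda_0 I \Longleftrightarrow \tau-\lambda_0$ disconjugate (i.e., disconjugacy holds \emph{at} the lower bound, not merely strictly below it), and its proof of $(i)\Rightarrow(iii)$ is worth contrasting with yours. Both begin with a solution vanishing at $x_1<x_2$, but where you compute $Q_\lambda[w]=0$ and invoke strict positivity of the form for $\lambda<\lambda_0$, the paper instead recognizes $v|_{(x_1,x_2)}$ as the principal eigenfunction of $T_{F,(x_1,x_2)}$ with eigenvalue $\lambda_0$, enlarges to $(x_3,x_4)\supsetneq(x_1,x_2)$, and appeals to the strict domain monotonicity of Theorem~\ref{t3.4} to force $\lambda_1(T_{F,(x_3,x_4)})<\lambda_0$, contradicting $T_F\geq\lambda_0 I$. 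Your route is shorter and yields the explicit threshold $\alpha=\lambda_0(\dot T)$, but it does not reach $\lambda=\lambda_0$ itself (since then $Q_{\lambda_0}[w]=0$ is no contradiction); the paper's domain-monotonicity step, together with the unique-continuation argument built into Theorem~\ref{t3.2}, is precisely what buys the endpoint case.
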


Assuming $T_{min}$ is lower semibounded, the following theorem introduces generalized boundary values for functions $g\in \dom(T_{max})$.  We omit the proof since it is identical to the analogous result for three-coefficient Sturm--Liouville operators obtained in \cite[Theorem 4.5]{GLN20}.

\begin{theorem} \lb{t2.18}

Assume Hypothesis \ref{h2.1} and that $\tau$ is in the limit circle case at $a$ and $b$ $($i.e., $\tau$ is quasi-regular on $(a,b)$$)$. In addition, assume that 
$T_{min} \geq \lambda_0 I_{L^2((a,b);r dx)}$ for some $\lambda_0 \in \bbR$, and denote by 
$u_a(\lambda_0, \dott)$ and $\hatt u_a(\lambda_0, \dott)$ $($resp., $u_b(\lambda_0, \dott)$ and 
$\hatt u_b(\lambda_0, \dott)$$)$ principal and nonprincipal solutions of $(\tau -\lambda_0)u = 0$ at $a$ 
$($resp., $b$$)$, satisfying
\begin{equation}
W(\hatt u_a(\lambda_0,\dott), u_a(\lambda_0,\dott))(x) 
= W(\hatt u_b(\lambda_0,\dott), u_b(\lambda_0,\dott))(x) = 1, \quad x \in (a,b).  
\lb{11.3.33AB} 
\end{equation}
Introducing $v_j \in \dom(T_{max})$, $j\in \{1,2\}$, via 
\begin{align}
v_1(x) = \begin{cases} \hatt u_a(\lambda_0,x), & \text{for $x$ near a}, \\
\hatt u_b(\lambda_0,x), & \text{for $x$ near b},  \end{cases}   \quad 
v_2(x) = \begin{cases} u_a(\lambda_0,x), & \text{for $x$ near a}, \\
u_b(\lambda_0,x), & \text{for $x$ near b},  \end{cases}   \lb{11.3.33A}
\end{align} 
one obtains for all $g \in \dom(T_{max})$, 
\begin{align}
\begin{split} 
\wti g(a) &= - W(v_2, g)(a) = \wti g_1(a)  =  - W(u_a(\lambda_0,\dott), g)(a) 
= \lim_{x \downarrow a} \f{g(x)}{\hatt u_a(\lambda_0,x)},    \\
\wti g(b) &= - W(v_2, g)(b) = \wti g_1(b) =  - W(u_b(\lambda_0,\dott), g)(b) = \lim_{x \uparrow b} \f{g(x)}{\hatt u_b(\lambda_0,x)},    
\end{split} \lb{11.3.34A} \\
\begin{split}
{\wti g}^{\, \prime}(a) &= W(v_1, g)(a) = \wti g_2(a) = W(\hatt u_a(\lambda_0,\dott), g)(a) 
= \lim_{x \downarrow a} \f{g(x) - \wti g(a) \hatt u_a(\lambda_0,x)}{u_a(\lambda_0,x)},    \\ 
{\wti g}^{\, \prime}(b) &= W(v_1, g)(b) = \wti g_2(b) = W(\hatt u_b(\lambda_0,\dott), g)(b)
= \lim_{x \uparrow b} \f{g(x) - \wti g(b) \hatt u_b(\lambda_0,x)}{u_b(\lambda_0,x)}.   \lb{11.3.34B}
\end{split} 
\end{align}
In particular, the limits on the right-hand sides in \eqref{11.3.34A}, \eqref{11.3.34B} exist. 
\end{theorem}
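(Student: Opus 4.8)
The plan is to reduce the theorem to two one-sided limit statements at each endpoint and to verify these by variation of parameters; I describe the endpoint $a$ in detail, the endpoint $b$ being entirely analogous. First, since $v_2 \equiv u_a(\la_0,\dott)$ on a right neighborhood of $a$ and the boundary Wronskian $W(\dott, g)(a)$ depends only on the behavior of its first argument near $a$ (the limits in \eqref{2.9a} exist and are finite for all $f, g \in \dom(T_{max})$), the identities $-W(v_2, g)(a) = -W(u_a(\la_0,\dott), g)(a)$ and $W(v_1, g)(a) = W(\hatt u_a(\la_0,\dott), g)(a)$ are immediate, and likewise at $b$. It therefore remains to adopt these Wronskians as the definitions of $\wti g(a)$ and ${\wti g}^{\,\prime}(a)$ and to prove the two limit formulas
\[
\wti g(a) = \lim_{x\downarrow a}\f{g(x)}{\hatt u_a(\la_0,x)}, \qquad
{\wti g}^{\,\prime}(a) = \lim_{x\downarrow a}\f{g(x) - \wti g(a)\,\hatt u_a(\la_0,x)}{u_a(\la_0,x)},
\]
in particular that both limits exist.

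Next I would represent $g$ near $a$. Setting $k := (\tau - \la_0)g \in L^2((a,b);r\,dx)$, fixing $x_1 \in (a,b)$, and using the fundamental system $\{u_a(\la_0,\dott), \hatt u_a(\la_0,\dott)\}$ normalized by $W(\hatt u_a, u_a)\equiv 1$ (so $W(u_a, \hatt u_a)\equiv -1$), variation of parameters gives
\[
g = c_1 u_a(\la_0,\dott) + c_2 \hatt u_a(\la_0,\dott) + \hatt u_a(\la_0,\dott)\, A - u_a(\la_0,\dott)\, B,
\]
where $A(x) = \int_{x_1}^x u_a(\la_0,t)(rk)(t)\,dt$, $B(x) = \int_{x_1}^x \hatt u_a(\la_0,t)(rk)(t)\,dt$, and one checks directly that the particular solution $y_p := \hatt u_a A - u_a B$ satisfies $y_p(x_1) = y_p^{[1]}(x_1) = 0$. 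Feeding $\tau u_a = \la_0 u_a$, $\tau \hatt u_a = \la_0 \hatt u_a$, and $(\tau - \la_0)y_p = k$ into the product rule $W(f,g)' = [g\,\tau f - f\,\tau g]\,r$ yields $W(u_a, y_p) = -A$ and $W(\hatt u_a, y_p) = -B$, hence the closed forms
\[
W(u_a(\la_0,\dott), g)(x) = -c_2 - A(x), \qquad W(\hatt u_a(\la_0,\dott), g)(x) = c_1 - B(x).
\]
Because $\tau$ is limit circle at $a$, both $u_a(\la_0,\dott)$ and $\hatt u_a(\la_0,\dott)$ belong to $L^2((a,b);r\,dx)$ near $a$; Cauchy--Schwarz then shows $u_a(rk)$ and $\hatt u_a(rk)$ integrable near $a$, so $A$ and $B$ have finite limits as $x\downarrow a$.

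For the first limit I would divide the representation of $g$ by $\hatt u_a(\la_0,x)$ and let $x\downarrow a$, using $u_a(\la_0,x)/\hatt u_a(\la_0,x)\to 0$ (the defining property of the principal solution) to annihilate the $u_a$-terms; comparison with the closed form above gives $\lim_{x\downarrow a} g/\hatt u_a = -W(u_a(\la_0,\dott), g)(a) = \wti g(a)$. For the second limit, after subtracting $\wti g(a)\hatt u_a$ and dividing by $u_a(\la_0,x)$, the only delicate term is
\[
\f{\hatt u_a(\la_0,x)}{u_a(\la_0,x)}\int_a^x u_a(\la_0,t)(rk)(t)\,dt,
\]
an indeterminate product since $\hatt u_a/u_a\to\infty$. \emph{This is the main obstacle.} I would control it using the canonical representation $u_a(\la_0,\dott) = \hatt u_a(\la_0,\dott)\,m$ with $m(x) = \int_a^x [p\,\hatt u_a(\la_0,\dott)^2]^{-1}\,dt$ from Theorem \ref{t2.10} (up to an irrelevant constant factor): since $m$ is nondecreasing, $\int_a^x u_a^2 r\,dt = \int_a^x \hatt u_a^2 m^2 r\,dt \leq m(x)^2 \int_a^{x_1}\hatt u_a^2 r\,dt$, whence a second Cauchy--Schwarz estimate gives
\[
\left\vert \f{\hatt u_a(\la_0,x)}{u_a(\la_0,x)}\int_a^x u_a(\la_0,t)(rk)(t)\,dt\right\vert
\leq \left(\int_a^{x_1}\hatt u_a(\la_0,t)^2 r(t)\,dt\right)^{1/2}\left(\int_a^x k(t)^2 r(t)\,dt\right)^{1/2},
\]
and the right-hand side tends to $0$ as $x\downarrow a$ because the last factor is the tail of an $L^2((a,b);r\,dx)$ function. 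The surviving terms then combine, via the closed form for $W(\hatt u_a(\la_0,\dott), g)(x)$, to $W(\hatt u_a(\la_0,\dott), g)(a) = {\wti g}^{\,\prime}(a)$. Repeating the argument at $b$ (with tail integrals of the form $\int_x^b$ and the endpoint roles reversed) completes the proof and establishes, en route, the existence of all four one-sided limits.
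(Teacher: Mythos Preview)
Your proof is correct and complete; the variation-of-parameters representation together with the Cauchy--Schwarz control of the indeterminate term $\f{\hatt u_a}{u_a}\int_a^x u_a(rk)\,dt$ via the canonical principal-solution formula $u_a = C\,\hatt u_a\, m$ is exactly the right mechanism, and all the sign and Wronskian bookkeeping checks out. The paper itself omits the proof entirely, referring to \cite[Theorem~4.5]{GLN20} for the three-coefficient case and noting that the argument carries over verbatim; your approach is the standard one and is almost certainly what \cite{GLN20} does as well.
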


\begin{remark} \lb{r2.19} 
In the case when $\tau$ is regular on $(a,b)$, the generalized boundary values defined by \eqref{11.3.34A} and \eqref{11.3.34B} coincide with the boundary values in \eqref{2.6a}:
\begin{equation}
\wti g(c) = g(c),\quad {\wti g}^{\, \prime}(c) = g^{[1]}(c),\quad c\in \{a,b\}.
\end{equation}
\hfill $\diamond$
\end{remark}

Under the assumptions of Theorem \ref{t2.18}, the self-adjoint extensions of $T_{min}$ are parametrized in terms of the generalized boundary values at the endpoints of $(a,b)$ by separated or coupled boundary conditions.  The following result is a restatement of \cite[Theorem 6.4]{EGNT13} in terms of the generalized boundary values \eqref{11.3.34A}, \eqref{11.3.34B}, so we omit its proof.

\begin{theorem} \lb{t2.20} 
Assume Hypothesis \ref{h2.1} and that $\tau$ is in the limit circle case at $a$ and $b$ $($i.e., $\tau$ is quasi-regular on $(a,b)$$)$. In addition, assume that 
$T_{min} \geq \lambda_0 I_{L^2((a,b);r dx)}$ for some $\lambda_0 \in \bbR$. Then, given 
\eqref{11.3.34A} and \eqref{11.3.34B}, the following items $(i)$ and $(ii)$ hold: \\[1mm]  
$(i)$ All self-adjoint extensions $T_{\gamma,\delta}$ of $T_{min}$ with separated boundary conditions are of the form
\begin{align}
\begin{split} 
& T_{\gamma,\delta} f = \tau f, \quad \gamma,\delta \in[0,\pi),      \lb{23.11.2.27} \\
& f \in \dom(T_{\gamma,\delta})=\left\{g\in\dom(T_{max}) \, \Bigg| \, \begin{matrix} \sin(\gamma) {\wti g}^{\, \prime}(a) 
+ \cos(\gamma) \wti g(a) = 0; \\ \sin(\delta) {\wti g}^{\, \prime}(b) + \cos(\delta) \wti g(b) = 0 \end{matrix} \right\}.    
\end{split} 
\end{align}
Moreover, $\sigma(T_{\gamma,\delta})$ is simple. \\[1mm]
$(ii)$ All self-adjoint extensions $T_{\varphi,R}$ of $T_{min}$ with coupled boundary conditions are of the type
\begin{align}
\begin{split} 
& T_{\varphi,R} f = \tau f,    \\
& f \in \dom(T_{\varphi,R})=\left\{g\in\dom(T_{max}) \, \Bigg| \begin{pmatrix} \wti g(b) 
\\ {\wti g}^{\, \prime}(b) \end{pmatrix} = e^{i\varphi}R \begin{pmatrix}
\wti g(a) \\ {\wti g}^{\, \prime}(a) \end{pmatrix} \right\}, \lb{23.11.2.27a}
\end{split}
\end{align}
where $\varphi \in [0,\pi)$, and $R \in SL(2,\bbR)$.
\end{theorem}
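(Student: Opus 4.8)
The plan is to reduce the assertion to \cite[Theorem 6.4]{EGNT13}, whose content is precisely this parametrization expressed through Wronskian-based boundary functionals; the only genuinely new work is to show that those functionals coincide with the generalized boundary values $\wti g(c)$, ${\wti g}^{\,\prime}(c)$ supplied by Theorem \ref{t2.18}. The starting point is the Lagrange identity: integrating $W(\overline f, g)'(x) = \big[g(x)(\tau\overline f)(x) - \overline f(x)(\tau g)(x)\big] r(x)$ and using that the coefficients are real (so $\overline{\tau f} = \tau \overline f$), one obtains for all $f, g \in \dom(T_{max})$ the Green's formula
\[
(T_{max} f, g)_{L^2((a,b);r\,dx)} - (f, T_{max} g)_{L^2((a,b);r\,dx)} = W(\overline f, g)(b) - W(\overline f, g)(a),
\]
the boundary values on the right existing by \eqref{2.9a}.

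The key step is to rewrite each endpoint Wronskian in terms of the generalized boundary values. Fix $c \in \{a,b\}$ and apply the Plücker relation for the $2\times 2$ Wronskian minors of the four functions $f, g, u_c(\lambda_0,\dott), \hatt u_c(\lambda_0,\dott)$, namely
\[
W(f,g)\,W(u_c,\hatt u_c) - W(f,u_c)\,W(g,\hatt u_c) + W(f,\hatt u_c)\,W(g,u_c) = 0,
\]
evaluated as $x \to c$. Inserting the normalization $W(\hatt u_c(\lambda_0,\dott), u_c(\lambda_0,\dott)) = 1$ from \eqref{11.3.33AB} together with the identifications $-W(u_c(\lambda_0,\dott), g)(c) = \wti g(c)$ and $W(\hatt u_c(\lambda_0,\dott), g)(c) = {\wti g}^{\,\prime}(c)$ from \eqref{11.3.34A}, \eqref{11.3.34B}, a short computation yields
\[
W(f,g)(c) = \wti f(c)\,{\wti g}^{\,\prime}(c) - {\wti f}^{\,\prime}(c)\,\wti g(c), \quad c \in \{a,b\}.
\]
Since $u_c(\lambda_0,\dott)$ and $\hatt u_c(\lambda_0,\dott)$ are real-valued, the generalized boundary values of $\overline f$ are the complex conjugates of those of $f$, so replacing $f$ by $\overline f$ gives $W(\overline f, g)(c) = \overline{\wti f(c)}\,{\wti g}^{\,\prime}(c) - \overline{{\wti f}^{\,\prime}(c)}\,\wti g(c)$. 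This is exactly the bilinear expression in the ordinary boundary values that appears in the regular case (cf.\ Remark \ref{r2.19}), now with the ordinary values replaced by their generalized counterparts.

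With this identification, the maps $g \mapsto (\wti g(a), \wti g(b))$ and $g \mapsto ({\wti g}^{\,\prime}(a), {\wti g}^{\,\prime}(b))$ furnish (after a fixed sign change at $a$) a boundary triplet for $T_{max}$: the Green's formula above holds in boundary-triplet form, and surjectivity of the combined boundary map onto $\bbC^2 \oplus \bbC^2$ follows by realizing arbitrary prescribed boundary data using the functions $v_1, v_2$ of \eqref{11.3.33A} localized near the endpoints, exactly as in the quasi-regular deficiency-index-$(2,2)$ setting. The self-adjoint extensions of $T_{min}$ therefore correspond bijectively to the Lagrangian (maximal isotropic) subspaces of the $(2,2)$-signature boundary form, which split into the separated family \eqref{23.11.2.27}, parametrized by $\gamma, \delta \in [0,\pi)$, and the coupled family \eqref{23.11.2.27a}, parametrized by $\varphi \in [0,\pi)$ and $R \in SL(2,\bbR)$; this is precisely the classification of \cite[Theorem 6.4]{EGNT13}, whose Wronskian functionals now read as the generalized boundary values. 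Finally, for the separated case any eigenfunction of $T_{\gamma,\delta}$ at eigenvalue $\lambda$ solves $(\tau-\lambda)u = 0$ and satisfies $\sin(\gamma){\wti u}^{\,\prime}(a) + \cos(\gamma)\wti u(a) = 0$; since this condition cuts the two-dimensional solution space of $(\tau-\lambda)u=0$ down to one dimension, every eigenspace is at most one-dimensional and $\sigma(T_{\gamma,\delta})$ is simple.

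I expect the main obstacle to be the second step\dash the careful bookkeeping of signs in the Plücker relation and in the identities \eqref{11.3.34A}, \eqref{11.3.34B} that produces the clean bilinear formula for $W(f,g)(c)$\dash together with, in a fully self-contained argument, the verification of surjectivity of the boundary map; invoking \cite[Theorem 6.4]{EGNT13} for the actual enumeration of Lagrangian subspaces sidesteps the latter and leaves the endpoint-Wronskian identification as the crux.
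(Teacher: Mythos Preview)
Your approach is correct and is precisely what the paper does: the paper explicitly states that Theorem \ref{t2.20} ``follows as a restatement of \cite[Theorem 6.4]{EGNT13} in terms of generalized boundary values'' and omits the proof entirely. You have supplied the bridging detail the paper leaves implicit, namely the Pl\"ucker-type identity that converts the Wronskian boundary form $W(\overline f,g)(c)$ into the bilinear expression $\overline{\wti f(c)}\,{\wti g}^{\,\prime}(c) - \overline{{\wti f}^{\,\prime}(c)}\,\wti g(c)$, which is exactly what is needed to translate the classification of \cite[Theorem 6.4]{EGNT13} into the language of \eqref{11.3.34A}, \eqref{11.3.34B}.
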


Under the hypotheses of Theorem \ref{t2.20}, \cite[Theorem 3.6]{EGNT13} and \eqref{11.3.34A}, \eqref{11.3.34B} imply that the minimal operator takes on the form
\begin{align}
\begin{split} 
& T_{min} f = \tau f, \\
& f \in \dom(T_{min})= \big\{g\in\dom(T_{max})  \, \big| \, \wti g(a) = {\wti g}^{\, \prime}(a) =0 
= \wti g(b) = {\wti g}^{\, \prime}(b)\big\}.      \lb{23.11.2.27b} 
\end{split}
\end{align}

\begin{remark}\lb{r2.21}

If $\tau$ is in the limit point case at one or both interval endpoints, the corresponding boundary conditions at that endpoint are dropped and only a separated boundary condition at the other end point (if the latter is a limit circle endpoint for $\tau$) has to be imposed in Theorem \ref{t2.20}
. In other words, the generalized boundary values \eqref{11.3.34A}, \eqref{11.3.34B} are only relevant if the endpoint in question is of the limit circle type. In the case where $\tau$ is in the limit point case at both endpoints, all boundary values and boundary conditions become superfluous as in this case $T_{min} = T_{max}$ is self-adjoint. 
\hfill $\diamond$
\end{remark}

If $T_{min}$ is lower semibounded, then $T_{min}$ has a \textit{Friedrichs extension}, which we shall denote by $T_F$.  The following result characterizes $T_F$ in terms of functions that mimic the behavior of principal solutions near an endpoint. 

\begin{theorem}[{\cite[Theorem 11.17]{EGNT13}}] \lb{t2.22}

Assume Hypothesis \ref{h2.1}.  If $\dot T$ is bounded from below by $\gamma_0 \in \bbR$, $\dot T \geq \gamma_0 I_{L^2((a,b);r\,dx)}$, which by Corollary \ref{c2.15}
 is equivalent to the existence of $\mu\in \bbR$ and functions $g_a$ and $g_b$ satisfying $g_a,g_b,g_a^{[1]},g_b^{[1]}\in AC_{loc}((a,b))$, $g_a>0$ a.e.\ near $a$, $g_b>0$ a.e.\ near $b$,
\begin{equation}\lb{11.10.56}
\int_a\frac{dx}{p(x)g_a(x)^2}=\int^b\frac{dx}{p(x)g_b(x)^2}=\infty,
\end{equation}
and
\begin{equation}\lb{11.10.57}
\begin{split}
q&\geq \mu r-s\frac{g_a^{[1]}}{g_a}+\frac{\big(g_a^{[1]}\big)'}{g_a} \, \text{ a.e.\ near $a$},\\
q&\geq \mu r-s\frac{g_b^{[1]}}{g_b}+\frac{\big(g_b^{[1]}\big)'}{g_b} \, \text{ a.e.\ near $b$},
\end{split}
\end{equation}
then the Friedrichs extension $T_F$ of $T_{min}$ is characterized by
\begin{align}
&T_Ff=\tau f,\no\\
&f\in \dom (T_F)=\bigg\{g\in \dom (T_{max}) \, \bigg|\, \int_a dx \, pg_a^2\bigg|\bigg(\frac{g}{g_a}\bigg)' \bigg|^2 < \infty; \lb{11.10.58}\\
&\hspace*{5.3cm} \int^b dx \, pg_b^2\bigg|\bigg(\frac{g}{g_b}\bigg)' \bigg|^2 < \infty  \bigg\}.\no
\end{align}
In particular, 
\begin{align}
&\int_a dx \, \bigg|q-\frac{\big(g_a^{[1]}\big)'}{g_a}+s\frac{g_a^{[1]}}{g_a} \bigg||f|^2 <\infty, \, 
\int^b dx \, \bigg|q-\frac{\big(g_b^{[1]}\big)'}{g_b}+s\frac{g_b^{[1]}}{g_b} \bigg| |f|^2 < \infty,    \no \\
& \hspace*{8.2cm}  f\in \dom (T_F).       \lb{11.10.59a} 
\end{align}
\end{theorem}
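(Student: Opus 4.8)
The plan is to realize $T_F$ through the standard quadratic-form construction. Recall that the Friedrichs extension is the self-adjoint operator associated, via the first and second representation theorems, with the closure $\mathfrak{q}_F$ of the lower semibounded form $\mathfrak{q}[f] = \big(f, \dot T f\big)_{L^2((a,b);r\,dx)} - \mu\, \big(f,f\big)_{L^2((a,b);r\,dx)}$ originally defined on $\dom\big(\dot T\big)$ (the real shift $\mu$ being immaterial for the form domain since $\dot T \geq \gamma_0 I_{L^2((a,b);r\,dx)}$), and that then $\dom(T_F) = \dom(\mathfrak{q}_F) \cap \dom(T_{max})$ with $T_F f = \tau f$. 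First I would compute $\mathfrak{q}$ explicitly on the core: for compactly supported $f \in \dom\big(\dot T\big)$, a single integration by parts (using $f^{[1]} = p[f'+sf]$ and the vanishing of the boundary contribution) yields $\big(f, \dot T f\big)_{L^2((a,b);r\,dx)} = \int_a^b \big(p^{-1}|f^{[1]}|^2 + q|f|^2\big)\,dx$. Thus the entire problem reduces to analyzing the closure of $f \mapsto \int_a^b \big(p^{-1}|f^{[1]}|^2 + (q-\mu r)|f|^2\big)\,dx$ near each endpoint.

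The key algebraic step is a ground-state (Picone-type) factorization. Writing $h = f/g_a$ near $a$ and using the identity $p g_a^2 h' = g_a f^{[1]} - f g_a^{[1]}$, a direct computation gives the pointwise identity
\[
\frac{|f^{[1]}|^2}{p} + (q - \mu r)|f|^2 = p g_a^2\left|\left(\frac{f}{g_a}\right)'\right|^2 + \frac{d}{dx}\left[\frac{|f|^2 g_a^{[1]}}{g_a}\right]
\]
whenever $g_a$ solves $(\tau - \mu) g_a = 0$; when $g_a$ merely satisfies the supersolution inequality \eqref{11.10.57}, the equality is replaced by ``$\geq$''. The same identity holds near $b$ with $g_b$ in place of $g_a$. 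Integrating over a compact subinterval and invoking compact support shows that, up to the additive term $\mu\,\norm{f}_{L^2((a,b);r\,dx)}^2$, the form is bounded below by the weighted Dirichlet integrals $\int_a p g_a^2 |(f/g_a)'|^2\,dx$ and $\int^b p g_b^2 |(f/g_b)'|^2\,dx$; a matching upper bound is supplied by the domination established at the end, so these integrals fix the natural form norm.

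Next I would characterize $\dom(\mathfrak{q}_F)$ as the set of $g \in L^2((a,b);r\,dx)$ for which the two weighted integrals are finite, by exhibiting an approximating sequence of compactly supported functions that is Cauchy in the form norm. Here the hypotheses $\int_a [p g_a^2]^{-1}\,dx = \int^b [p g_b^2]^{-1}\,dx = \infty$ are decisive: they encode that $g_a$, $g_b$ are principal in the sense of Theorem \ref{t2.10}, and a cutoff argument near the endpoints shows that, for $g$ with finite weighted Dirichlet integral, the boundary flux term $|g|^2 g_a^{[1]}/g_a$ has limit zero at $a$ (and similarly at $b$); this is exactly what singles out the Friedrichs (principal-solution) extension rather than any other self-adjoint realization, and it guarantees that the approximants may be chosen compactly supported. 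Assembling these pieces gives $\dom(T_F) = \{g \in \dom(T_{max}) : \int_a p g_a^2 |(g/g_a)'|^2\,dx < \infty,\ \int^b p g_b^2 |(g/g_b)'|^2\,dx < \infty\}$ with $T_F g = \tau g$, which is \eqref{11.10.58}. The ``in particular'' bound \eqref{11.10.59a} then follows by rearranging the factorization: the supersolution inequality lets one dominate $\big|q - (g_a^{[1]})'/g_a + s\,g_a^{[1]}/g_a\big|\,|f|^2$ pointwise by the (now finite) integrand $p g_a^2 |(f/g_a)'|^2$ plus $|\mu|\,r|f|^2$, and integration yields finiteness; this domination is also what furnishes the upper bound promised above.

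The main obstacle I anticipate is the boundary analysis of the third step: showing that finiteness of the weighted Dirichlet integral, together with the divergence condition $\int_a [pg_a^2]^{-1}\,dx = \infty$, forces the boundary flux term to vanish and that the resulting form closure reproduces precisely the Friedrichs extension (and not some other self-adjoint extension carrying nonzero generalized boundary data). Controlling this limit demands a careful cutoff and approximation near a possibly singular endpoint, combined with the principal/nonprincipal asymptotics of Theorems \ref{t2.8} and \ref{t2.10}; by comparison the interior estimate and the purely algebraic factorization are routine.
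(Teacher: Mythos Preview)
The paper does not contain a proof of this theorem; it is quoted verbatim from \cite[Theorem~11.17]{EGNT13}, and the surrounding text explicitly states that, with a few exceptions (which do not include this result), ``all of the results in this section may be found with complete proofs in \cite{EGNT13}.'' So there is no in-paper proof to compare against.

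That said, your outline is the standard route and is essentially the argument one finds in the cited source. The Jacobi/Picone factorization you write down is exactly the identity the present paper later quotes as \cite[Equation~(11.66)]{EGNT13} (see \eqref{4.12a}), and your identification of $\dom(T_F)$ as $\dom(\mathfrak q_F)\cap\dom(T_{max})$ with $\mathfrak q_F$ the closure of the quadratic form on $\dom(\dot T)$ is the correct abstract input. You have also correctly isolated the role of the divergence conditions $\int_a[p\,g_a^2]^{-1}\,dx=\int^b[p\,g_b^2]^{-1}\,dx=\infty$: these are precisely what force the boundary flux $|f|^2 g_c^{[1]}/g_c$ to vanish at $c\in\{a,b\}$ for form-domain elements and hence select the Friedrichs extension among all self-adjoint realizations. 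Your candid admission that this boundary step is the delicate part is accurate; the standard way to carry it out is a Hardy-type estimate combined with truncation, exactly as you indicate.

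One small point: your closing sentence about \eqref{11.10.59a} is slightly loose. The domination of $\big|q-(g_a^{[1]})'/g_a+s\,g_a^{[1]}/g_a\big|\,|f|^2$ is not pointwise by $p\,g_a^2|(f/g_a)'|^2 + |\mu|\,r|f|^2$ alone; rather, one integrates the factorization identity over $(\alpha,\beta)\Subset(a,b)$, uses that the extra term $[q-\mu r-(g_a^{[1]})'/g_a+s\,g_a^{[1]}/g_a]|f|^2$ is nonnegative by the supersolution hypothesis, and bounds its integral by the finite quantities $\int p^{-1}|f^{[1]}|^2\,dx$, $\int p\,g_a^2|(f/g_a)'|^2\,dx$, and the boundary flux (now known to have finite limits for $f\in\dom(T_F)$). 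This is a minor rewording, not a gap.
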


Theorem \ref{t2.22}
 characterizes $T_F$ in terms of functions which satisfy the same integral conditions as principal solutions (cf.~\eqref{2.16a} and \eqref{11.10.56}).  Alternatively, one may characterize $T_F$ in terms of functions for which one or both of the integrals in \eqref{11.10.56} are finite.  However, in these cases, the characterization of $T_F$ requires a certain boundary condition.

\begin{theorem}[{\cite[Theorem 11.19]{EGNT13}}]\lb{t2.23}

Assume Hypothesis \ref{h2.1}.  If $\dot T$ is bounded from below by $\gamma_0 \in \bbR$, $\dot T \geq \gamma_0 I_{L^2((a,b);r\,dx)}$, which by Corollary \ref{c2.15}
 is equivalent to the existence of $\mu\in \bbR$ and functions $h_a$ and $h_b$ satisfying $h_a,h_b,h_a^{[1]},h_b^{[1]}\in AC_{loc}((a,b))$, $h_a>0$ a.e.\ near $a$, $h_b>0$ a.e.\ near $b$,
\begin{equation}\lb{11.10.77}
\int_a\frac{dx}{p(x)h_a(x)^2}<\infty, \quad \int^b\frac{dx}{p(x)h_b(x)^2}=\infty,
\end{equation}
and
\begin{equation}\lb{11.10.78}
\begin{split}
q&\geq \mu r-s\frac{h_a^{[1]}}{h_a}+\frac{\big(h_a^{[1]}\big)'}{h_a}  \, \text{ a.e.\ near $a$},\\
q&\geq \mu r-s\frac{h_b^{[1]}}{h_b}+\frac{\big(h_b^{[1]}\big)'}{h_b}  \, \text{ a.e.\ near $b$},
\end{split}
\end{equation}
then the Friedrichs extension $T_F$ of $T_{min}$ is characterized by
\begin{align}
&T_Ff=\tau f,\no\\
& \, f\in \dom (T_F)=\bigg\{g\in \dom (T_{max}) \, \bigg|\, \int^b dx \, p h_b^2\bigg|\bigg(\frac{g}{h_b}\bigg)' \bigg|^2 < \infty;\lb{11.10.79}\\
&\hspace*{2.8cm} \int_a dx \, p h_a^2\bigg|\bigg(\frac{g}{h_a}\bigg)' \bigg|^2 < \infty; \, 
\wti g(a) = \lim_{x\downarrow a}\frac{g(x)}{h_a(x)}=0\bigg\}.\no
\end{align}
In particular,
\begin{align}
&\int_a dx \, \bigg|q-\frac{\big(h_a^{[1]}\big)'}{h_a}+s\frac{h_a^{[1]}}{h_a} \bigg||f|^2 < \infty, \, 
\int^b dx \, \bigg|q-\frac{\big(h_b^{[1]}\big)'}{h_b}+s\frac{h_b^{[1]}}{h_b} \bigg| |f|^2 < \infty,    \no \\
& \hspace*{8.2cm}  f\in \dom (T_F).     \lb{11.10.79a}
\end{align}
We omit the analogous case where the roles of $a$ and $b$ are interchanged, but note that if \eqref{11.10.77} is replaced by 
\begin{equation}\lb{11.10.79b}
\int_a\frac{dx}{p(x) h_a(x)^2}<\infty, \quad \int^b\frac{dx}{p(x) h_b(x)^2}<\infty,
\end{equation}
one obtains 
\begin{align}
&T_Ff=\tau f,  \no \\
& \, f\in \dom (T_F)=\bigg\{g\in \dom (T_{max}) \, \bigg|\, \int_a dx \, p h_a^2\bigg|\bigg(\frac{g}{h_a}\bigg)' \bigg|^2 < \infty;    \lb{11.10.79c} \\ 
& \quad  \int^b dx \, p h_b^2\bigg|\bigg(\frac{g}{h_b}\bigg)' \bigg|^2 < \infty; \, \wti g(a) = \lim_{x\downarrow a}\frac{g(x)}{h_a(x)}=0, \, 
\wti g(b) = \lim_{x\uparrow b}\frac{g(x)}{h_b(x)}=0 \bigg\}.\no
\end{align}
Here $\wti g(a)$ and $\wti g(b)$ are consistent with their use in \eqref{11.3.34A}.  
\end{theorem}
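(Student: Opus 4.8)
The plan is to realize $T_F$ through its quadratic form and to reduce the four-coefficient form to a weighted Dirichlet integral by a ground-state (Jacobi) substitution, exactly as in the proof of Theorem \ref{t2.22}; the sole new feature is the endpoint at which $\int dx/(p h^2)$ converges. First I would shift by $\mu$, replacing $\tau$ by $\tau - \mu$ and $q$ by $q - \mu r$, so that \eqref{11.10.78} becomes the statement that $h_a$ and $h_b$ are strictly positive supersolutions, $\tau h_a \geq 0$ near $a$ and $\tau h_b \geq 0$ near $b$. For $f\in\dom(\dot T)$ one has, after an integration by parts in which the compact support kills all boundary contributions,
\begin{equation*}
\big(f, \dot T f\big)_{L^2((a,b);r\,dx)} = \int_a^b dx\, \big[ p |f' + s f|^2 + q |f|^2\big],
\end{equation*}
and $T_F$ is the self-adjoint operator associated with the closure of this (shifted, nonnegative) form on $\dom(\dot T)$; by the second representation theorem, $\dom(T_F)$ is the intersection of the resulting form domain with $\dom(T_{max})$.

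Next, working separately near each endpoint and writing $f = h\,w$ with $w = f/h$ ($h=h_a$ near $a$, $h=h_b$ near $b$), the relation $f' + sf = (h^{[1]}/p)\,w + h\,w'$ together with an integration by parts yields the factorization
\begin{equation*}
\int p |f' + sf|^2 + q|f|^2 \, dx = \int p h^2 |w'|^2 \, dx + \int \Big(q - \tfrac{(h^{[1]})'}{h} + s \tfrac{h^{[1]}}{h}\Big) h^2 |w|^2 \, dx + (\text{boundary terms}),
\end{equation*}
in which the second integrand is nonnegative by the shifted supersolution inequality. This simultaneously shows $\dot T \geq 0$ and identifies the form norm near each endpoint with the weighted Dirichlet integral $\int p h^2 |(f/h)'|^2\,dx$ up to a nonnegative remainder controlled by the integrability bound \eqref{11.10.79a}. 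The characterization of $\dom(T_F)$ thus reduces to computing the closure of $\{f/h : f\in\dom(\dot T)\}$ in this weighted Dirichlet norm near $a$ and near $b$.

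The decisive dichotomy is the behavior of $\int dx/(p h^2)$. Near $b$, where $\int^b dx/(p h_b^2) = \infty$ (so $h_b$ is principal), one constructs cutoff functions built from the divergent primitive of $(p h_b^2)^{-1}$ whose weighted Dirichlet norm tends to $0$; this shows that every $g$ with $\int^b p h_b^2 |(g/h_b)'|^2 < \infty$ lies in the form closure with no surviving boundary condition, precisely as in Theorem \ref{t2.22}. Near $a$, however, $\int_a dx/(p h_a^2) < \infty$ (so $h_a$ is nonprincipal): by Cauchy--Schwarz, finiteness of $\int_a p h_a^2 |(g/h_a)'|^2$ forces $g/h_a$ to possess a finite limit $\wti g(a) = \lim_{x\downarrow a} g(x)/h_a(x)$, and the nonprincipal solution $h_a$ itself (for which this limit equals $1$) has vanishing weighted Dirichlet integral yet cannot be approximated by compactly supported functions. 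Consequently the form closure at $a$ retains exactly those $g$ with $\wti g(a) = 0$, which is the Dirichlet condition in \eqref{11.10.79}.

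I expect the main obstacle to be this last approximation analysis: establishing that the form closure imposes the single Dirichlet constraint $\wti g(a)=0$ at the convergent endpoint and nothing at the divergent endpoint. Concretely, one must build the vanishing-norm cutoffs at $b$ from the divergence of $\int^b dx/(p h_b^2)$ and, conversely, show at $a$ that a nonzero limit of $g/h_a$ cannot survive the closure\,---\,equivalently, that $h_a$ spans the one-dimensional deficiency between the minimal form and the Friedrichs form at a nonprincipal endpoint. Once both endpoint analyses are in place, intersecting the resulting form domain with $\dom(T_{max})$ and invoking \eqref{11.10.79a} yields \eqref{11.10.79}; the interchange of $a$ and $b$ is immediate by symmetry, and the doubly convergent case \eqref{11.10.79b}--\eqref{11.10.79c} follows by imposing the nonprincipal analysis at both endpoints simultaneously.
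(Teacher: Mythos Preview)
The paper does not contain a proof of this theorem: it is quoted verbatim from \cite[Theorem~11.19]{EGNT13} and, as the authors state at the outset of Section~\ref{s2}, all results in that section except Theorems~\ref{t2.18}, \ref{t2.20} and part of Corollary~\ref{c2.24} are taken from \cite{EGNT13} with proofs omitted. So there is nothing in the present paper to compare your proposal against.

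That said, your strategy is the standard one and is essentially what \cite{EGNT13} does (and what underlies the companion Theorem~\ref{t2.22}): shift by $\mu$, pass to the quadratic form, apply Jacobi's factorization to rewrite the form near each endpoint as a weighted Dirichlet integral $\int p h^2 |(f/h)'|^2\,dx$ plus a nonnegative remainder, and then analyze the form closure endpoint by endpoint. Your identification of the dichotomy is correct: divergence of $\int^b dx/(p h_b^2)$ permits cutoff sequences with vanishing Dirichlet energy (no boundary condition survives), while convergence of $\int_a dx/(p h_a^2)$ makes the trace functional $g\mapsto \lim_{x\downarrow a} g(x)/h_a(x)$ continuous in the form norm via Cauchy--Schwarz, so the Dirichlet condition $\wti g(a)=0$ persists in the closure. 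One small clarification: your remark that $h_a$ ``has vanishing weighted Dirichlet integral yet cannot be approximated'' should be phrased as a continuity argument for the trace functional rather than a direct approximation obstruction, since the form norm also carries the $L^2$ part; the cleanest way is to observe that $\wti g(a)$ is form-continuous and vanishes on compactly supported functions, hence on the closure.
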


As a self-adjoint extension of $T_{min}$, the Friedrichs extension $T_F$ may be characterized according to Theorem \ref{t2.20}.  In fact, \eqref{11.10.79c} implies $T_F \subseteq T_{0,0}$ and since self-adjoint operators are maximal, the Friedrichs extension of $T_{min}$ corresponds to separated boundary conditions with $\gamma=\delta=0$, that is, 
\begin{equation} 
T_F = T_{0,0}.
\end{equation}
 
\begin{corollary} \lb{c2.24}
Assume Hypothesis \ref{h2.1} and that $\tau$ is in the limit circle case at $a$ and $b$ $($i.e., $\tau$ is quasi-regular on $(a,b)$$)$.  If $T_{min}$ is lower semibounded, then the Friedrichs extension $T_F$ of $T_{min}$ equals $T_{0,0}$ $($cf.\ Theorem \ref{t2.20}
\,$(i)$$)$ and hence is of the form  
\begin{align}
\begin{split} 
&T_Ff=\tau f,   \\
&\, f\in \dom (T_F)= \big\{g\in \dom (T_{max}) \, \big|\, \wti g(a) = 0 = \wti g(b)\big\}. \lb{11.10.100A}
\end{split} 
\end{align}
Moreover, if $h_a, h_b$ are as in Theorem \ref{t2.23}
 $($e.g., if $h_a = \hatt u_a$, $h_b = \hatt u_b$ are nonprincipal solutions of $(\tau - \lambda)u = 0$, $\lambda \leq \gamma_0$$)$ then 
\begin{equation}
\int^b dx \, p(x) h_b(x)^2\bigg|\bigg(\frac{g(x)}{h_b(x)}\bigg)' \bigg|^2 < \infty \, \text{ and } \, 
\int_a dx \, p(x) h_a(x)^2\bigg|\bigg(\frac{g(x)}{h_a(x)}\bigg)' \bigg|^2 < \infty.  \lb{11.10.100B}
\end{equation} 
In particular, if $\tau$ is regular on $(a,b)$, then the Friedrichs extension $T_F$ of $T_{min}$ is of the form  
\begin{align}
\begin{split} 
&T_Ff=\tau f,   \\
&\, f\in \dom (T_F) = \{g\in \dom (T_{max}) \, |\, g(a) = 0 = g(b)\}. \lb{11.10.101}
\end{split} 
\end{align}
\end{corollary}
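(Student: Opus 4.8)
The plan is to deduce the corollary from the characterization of $T_F$ in Theorem \ref{t2.23} by making the specific choice of $h_a,h_b$ as nonprincipal solutions, and then to show that, under the quasi-regularity hypothesis, the two integral conditions occurring in \eqref{11.10.79c} hold automatically for every $g\in\dom(T_{max})$, so that only the boundary conditions $\wti g(a)=0=\wti g(b)$ survive.

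Since $T_{min}\geq\gamma_0 I_{L^2((a,b);r\,dx)}$, the operator $\dot T$ is bounded from below at both $a$ and $b$, so Theorem \ref{t2.14} (equivalently Corollary \ref{c2.17}) supplies a single $\lambda\in\bbR$ for which $\tau-\lambda$ is nonoscillatory at both endpoints (one may take $\lambda\leq\gamma_0$ as in the statement). For this $\lambda$ I would use Theorem \ref{t2.8} and its analogue at $a$ to fix nonprincipal solutions $h_a=\hatt u_a(\lambda,\dott)$ and $h_b=\hatt u_b(\lambda,\dott)$ of $(\tau-\lambda)u=0$, normalized to be positive near $a$ and near $b$, respectively. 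Because $\tau$ is in the limit circle case at both endpoints, each such global solution lies in $L^2((a,b);r\,dx)$ and satisfies $\tau h_a=\lambda h_a$, $\tau h_b=\lambda h_b\in L^2((a,b);r\,dx)$, so that $h_a,h_b\in\dom(T_{max})$. Rewriting $(\tau-\lambda)h_c=0$ (for $c\in\{a,b\}$) as $q=\lambda r - s\,h_c^{[1]}/h_c + \big(h_c^{[1]}\big)'/h_c$ shows that the differential inequalities of Theorem \ref{t2.23} hold with $\mu=\lambda$ (indeed as equalities), while the nonprincipal integral conditions \eqref{2.16a} give $\int_a dx\,\big(p h_a^2\big)^{-1}<\infty$ and $\int^b dx\,\big(p h_b^2\big)^{-1}<\infty$; this is precisely \eqref{11.10.79b}. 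Hence Theorem \ref{t2.23} applies and yields the characterization \eqref{11.10.79c} of $T_F$.

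The crux is the verification of \eqref{11.10.100B}, namely that both integrals in \eqref{11.10.79c} are finite for \emph{every} $g\in\dom(T_{max})$. Using $f'=f^{[1]}/p - sf$ for $f\in\{g,h_b\}$ together with the definition \eqref{2.4} of the Wronskian, a short computation gives $(g/h_b)'=W(h_b,g)/\big(p h_b^2\big)$, whence $p h_b^2\,\big|(g/h_b)'\big|^2=\big|W(h_b,g)\big|^2/\big(p h_b^2\big)$. Since $g,h_b\in\dom(T_{max})$, the boundary value $W(h_b,g)(b)$ exists and is finite by \eqref{2.9a}, so $W(h_b,g)$ is bounded near $b$; combined with $\int^b dx\,\big(p h_b^2\big)^{-1}<\infty$ this yields finiteness of the integral at $b$, and the argument at $a$ is identical. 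This is exactly the step where the limit circle hypothesis enters decisively: it is what forces $h_a,h_b\in\dom(T_{max})$ and thereby bounds the relevant Wronskians. I expect this Wronskian estimate to be the main point, the remaining ingredients being bookkeeping with the cited results.

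With \eqref{11.10.100B} established, the two integral conditions in \eqref{11.10.79c} are redundant on $\dom(T_{max})$, so \eqref{11.10.79c} collapses to $\dom(T_F)=\{g\in\dom(T_{max})\mid \wti g(a)=0=\wti g(b)\}$, which is \eqref{11.10.100A}; by Theorem \ref{t2.20}\,$(i)$ with $\gamma=\delta=0$ this is exactly $\dom(T_{0,0})$, so $T_F=T_{0,0}$. Finally, in the regular case $\tau$ is automatically quasi-regular and lower semibounded (Corollary \ref{c2.12}), so the above applies, and Remark \ref{r2.19} replaces the generalized boundary values by the ordinary ones, $\wti g(a)=g(a)$ and $\wti g(b)=g(b)$, turning the boundary conditions into $g(a)=0=g(b)$ and giving \eqref{11.10.101}.
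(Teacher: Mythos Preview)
Your proof is correct and follows essentially the same route as the paper's. Both arguments rewrite $p\,h_c^{2}\big|(g/h_c)'\big|^{2}$ as $|W(h_c,g)|^{2}/(p\,h_c^{2})$, observe that $h_c\in\dom(T_{max})$ in the limit circle case so the Wronskian $W(h_c,g)$ is bounded (the paper invokes \eqref{2.10a} directly, you use \eqref{2.9a} for boundedness near the endpoint, which suffices), and then combine this with $\int\big(p\,h_c^{2}\big)^{-1}dx<\infty$ to conclude \eqref{11.10.100B}; the reductions to \eqref{11.10.100A} via \eqref{11.10.79c} and to \eqref{11.10.101} via Remark~\ref{r2.19} are identical. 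Your write-up is somewhat more explicit in verifying that nonprincipal solutions actually satisfy the hypotheses of Theorem~\ref{t2.23} (equality in the differential inequalities with $\mu=\lambda$, and \eqref{11.10.79b} from \eqref{2.16a}), which the paper leaves implicit.
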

\begin{proof}
It suffices to show \eqref{11.10.100B} for all $g\in \dom(T_{max})$.  Indeed, the characterization of $T_F$ in \eqref{11.10.100A} then follows at once from \eqref{11.10.79c}.  We will prove convergence of the first integral in \eqref{11.10.100B}; the second integral is treated in an entirely analogous manner.  To this end, let $g\in \dom(T_{max})$.  Since $\tau$ is in the limit circle case at both endpoints of $(a,b)$, it follows that $h_a,h_b\in \dom(T_{max})$.  In consequence, by \eqref{2.10a}, $W(h_c,g) = g^{[1]}h_c - g h_c^{[1]}$, $c\in\{a,b\}$, is bounded on $(a,b)$.  Therefore, by \eqref{11.10.79b},
\begin{align}
\int_a dx \, p h_a^2\bigg|\bigg(\frac{g}{h_a}\bigg)' \bigg|^2 
&= \int_a dx \, p h_a^2\bigg|\frac{h_a g' - g h_a'}{h_a^2} \bigg|^2 \no  \\
&= \int_a dx \, \frac{1}{ph_a^2} \big|g^{[1]} h_a - g h_a^{[1]}\big|^2 
<\infty.
\end{align}
Finally, \eqref{11.10.101} follows immediately from \eqref{11.10.100A} and Remark \ref{r2.19}
.
\end{proof}

The case in Corollary \ref{c2.24} where $\tau$ is regular on $(a,b)$ was treated in \cite[Corollary~11.20]{EGNT13}.

\begin{remark} \lb{r2.25} 
If $T_{min}$ is lower semibounded and $\tau$ is in the limit circle case at $a$ and in the limit point case at $b$, then $T_F$ requires no boundary condition at the limit point endpoint $b$ (cf.~Remark \ref{r2.21}
).  In this case, $T_F$ takes the form
\begin{align}
T_Ff=\tau f,\quad f\in \dom (T_F)= \big\{g\in \dom (T_{max}) \, \big|\, \wti g(a) = 0\big\}. \lb{11.10.100Aa}
\end{align}
An analogous characterization holds in the case when $\tau$ is in the limit point case at $a$ and in the limit circle case at $b$.  If $\tau$ is in the limit point case at both endpoints of $(a,b)$, then $T_{min}$ is self-adjoint and $T_F=T_{min}$.\hfill $\diamond$
\end{remark}

\section{Strict Domain Monotonicity of the Principal Eigenvalue \\ of the Friedrichs Extension} \lb{s3}

We shall assume throughout this section that $\tau$ is \textit{regular} on $(a,b)$.  In this case, $T_{min}$ is lower semibounded by Corollary \ref{c2.12}, and its Friedrichs extension $T_F$ is characterized by Dirichlet boundary conditions according to \eqref{11.10.101}.  The sesquilinear form $Q_F$ uniquely associated to $T_F$ is given by (cf.~\cite[Equation (13.6)]{EGNT13})
\begin{align}
Q_F(f,g) &= \int_a^b dx\, \Big[p(x)^{-1}\overline{f^{[1]}(x)}g^{[1]}(x) + q(x) \overline{f(x)} g(x)\Big]\lb{2.14}\\
&= \big(|T_F|^{1/2}f,\sgn(T_F)|T_F|^{1/2}g\big)_{L^2((a,b); r\,dx)},\no
\end{align}
\begin{align}
f, g \in \dom(Q_F)&= \Bigg\{h\in L^2((a,b); r\,dx) \, \Bigg| \, \begin{aligned}&h \in AC ([a,b]), \, h(a) = h(b) = 0,\\
&(rp)^{-1/2} h^{[1]} \in L^2((a,b); r\,dx)\end{aligned}\Bigg\}\lb{2.15}\\
&=\dom\big(|T_F|^{1/2}\big).\no
\end{align}

In the regular case, every self-adjoint extension of $T_{min}$ has a discrete spectrum consisting of isolated eigenvalues with finite (geometric) multiplicities, see \cite[Corollary 7.2]{EGNT13}.  Actually, in the case of separated boundary conditions, all eigenvalues are simple by Theorem \ref{t2.20}
 $(i)$ (cf.~\cite[Corollary 7.4]{EGNT13}); that is, all eigenvalues have multiplicity one.  In particular, each eigenvalue of the Friedrichs extension $T_F$ of $T_{min}$ is simple.  Next, we turn to strict domain monotonicity of the principal (i.e., smallest) eigenvalue of the Friedrichs extension, $T_{F,(a_1,b_1)}$, $-\infty<a_1<b_1<\infty$, where we temporarily indicate the dependence of all relevant objects ($T_F$, $Q_F(\,\cdot\,,\,\cdot\,)$, etc.) on the underlying finite interval $(a_1,b_1)$ by appending the interval as a subscript ($T_{F,(a_1,b_1)}$, $Q_{F,(a_1,b_1)}(\,\cdot\,,\,\cdot\,)$, etc.).  We shall denote the corresponding principal eigenvalue of $T_{F,(a_1,b_1)}$ by 
 $\lambda_1(T_{F,(a_1,b_1)})$.

The proof of strict domain monotonicity of the principal eigenvalue $\lambda_1(T_{F,(a,b)})$ of the Friedrichs extension $T_{F, (a,b)}$ of $T_{min, (a,b)}$ in Theorem \ref{t3.4} below, relies on the variational characterization of the principal eigenvalue in terms of Rayleigh quotients.  As such, it is convenient to introduce the following notation for the Rayleigh quotient of a lower semibounded self-adjoint operator $T$ in a Hilbert space $\cH$:
\begin{equation}
R_T(f;\cH) := \frac{\big(|T|^{1/2}f,\sgn(T)|T|^{1/2}f\big)_{\cH}}{\|f\|_{\cH}^2},\quad f\in \dom\big(|T|^{1/2}\big)\big\backslash\{0\}.
\end{equation}

We start by recalling Rayleigh's principle:

\begin{theorem} \lb{t3.1} 
Suppose $A$ is self-adjoint and bounded from below in $\cH$, that is, for some $c \in \bbR$, $A \geq c I_{\cH}$. In addition, assume that $\lambda_{0,ess} = \inf (\sigma_{ess}(A)) > c$ and that 
$\sigma_d(A) \cap [c, \lambda_{0,ess}) \neq \emptyset$. Denoting the eigenvalues of $A$ below $\lambda_{0,ess}$ 
by $\{\lambda_j(A)\}_{j \in J}$, and the associated orthogonal eigenfunctions by $u_j$, $J \subseteq \bbN$ an appropriate $($finite or infinite\,$)$ index set, using the natural ordering $\lambda_1(A) \leq \lambda_2(A) \leq \cdots$ $($taking multiplicities into account\,$)$, one obtains
\begin{align}
\lambda_1(A) = \underset{0 \neq f \in \dom(A)}{\min} \f{(f,A f)_{\cH}}{\|f\|^2_{\cH}} =  \underset{0 \neq f \in \dom(|A|^{1/2})}{\min} R_A(f;\cH),   \lb{C.4.1}
\end{align}
and, if $|J| \geq 2$,
\begin{align}
\begin{split}
\lambda_j(A) = \underset{\substack{0 \neq f \in \dom(A) \\ (f, u_k)_{\cH} = 0, \, 1 \leq k \leq j-1}}{\min} \f{(f,A f)_{\cH}}{\|f\|^2_{\cH}} =  \underset{\substack{0 \neq f \in \dom(|A|^{1/2}) \\ (f, u_k)_{\cH} = 0, \, 1 \leq k \leq j-1}}{\min} 
R_A(f;\cH),& \\
j \in J, \; j \geq 2.&    \lb{C.4.2}
\end{split}
\end{align}
\end{theorem}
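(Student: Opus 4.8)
The plan is to prove Theorem \ref{t3.1} by reducing every Rayleigh quotient to a weighted average against the spectral measure of $A$. Writing $\{E_A(\lambda)\}_{\lambda\in\bbR}$ for the spectral family of the lower semibounded self-adjoint operator $A$, so that $A=\int_{[c,\infty)}\lambda\,dE_A(\lambda)$, I would first record the two identities valid for every $f$ in the form domain $\dom\big(|A|^{1/2}\big)$,
\[
\big(|A|^{1/2}f,\sgn(A)|A|^{1/2}f\big)_{\cH}=\int_{[c,\infty)}\lambda\,d\|E_A(\lambda)f\|_{\cH}^2,\qquad \|f\|_{\cH}^2=\int_{[c,\infty)}d\|E_A(\lambda)f\|_{\cH}^2,
\]
where the left-hand side of the first identity reduces to $(f,Af)_{\cH}$ whenever $f\in\dom(A)\subseteq\dom\big(|A|^{1/2}\big)$. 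Thus each quotient $R_A(f;\cH)$ is the average of $\lambda$ with respect to the positive measure $d\|E_A(\lambda)f\|_{\cH}^2$, and the two representations of $\lambda_j(A)$ in \eqref{C.4.1}, \eqref{C.4.2} will follow from the same estimate applied on the two nested domains.

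For the base case $j=1$, I would observe that the hypothesis $\sigma_d(A)\cap[c,\lambda_{0,ess})\neq\emptyset$ forces $\inf(\sigma(A))$ to be the isolated eigenvalue $\lambda_1(A)<\lambda_{0,ess}$, so $\sigma(A)\subseteq[\lambda_1(A),\infty)$ and the spectral measure of every $f$ is supported there. The averaging identity then gives $R_A(f;\cH)\geq\lambda_1(A)$ for all $0\neq f\in\dom\big(|A|^{1/2}\big)$, hence a fortiori on $\dom(A)$, while the eigenfunction $u_1\in\dom(A)$ satisfies $R_A(u_1;\cH)=\lambda_1(A)$. Therefore both infima in \eqref{C.4.1} are attained at $u_1$ and equal $\lambda_1(A)$.

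The step $j\geq2$ rests on one structural containment. Let $P_{<\lambda_j(A)}:=E_A\big((-\infty,\lambda_j(A))\big)$, whose range is spanned by the eigenfunctions with eigenvalue strictly below $\lambda_j(A)$; since $\lambda_j(A)<\lambda_{0,ess}$, this range is finite-dimensional and consists of genuine eigenfunctions. Because the enumeration $\lambda_1(A)\leq\lambda_2(A)\leq\cdots$ is nondecreasing and counts multiplicities, the indices $k$ with $\lambda_k(A)<\lambda_j(A)$ form an initial segment contained in $\{1,\dots,j-1\}$, whence $\ran\big(P_{<\lambda_j(A)}\big)\subseteq\operatorname{span}\{u_1,\dots,u_{j-1}\}$. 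Consequently any admissible $f$ (that is, $f\perp u_k$ for $1\leq k\leq j-1$) satisfies $P_{<\lambda_j(A)}f=0$, so its spectral measure is supported in $[\lambda_j(A),\infty)$, and the averaging identity yields $R_A(f;\cH)\geq\lambda_j(A)$ on both $\dom(A)$ and $\dom\big(|A|^{1/2}\big)$. Taking $f=u_j$, which is admissible, lies in $\dom(A)$, and satisfies $R_A(u_j;\cH)=\lambda_j(A)$, shows the bound is attained, establishing \eqref{C.4.2}.

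I expect the only genuine obstacle to be the bookkeeping in that structural step: one must verify that orthogonality to the first $j-1$ listed eigenfunctions exactly annihilates the spectral subspace below $\lambda_j(A)$, which requires care with repeated eigenvalues in the multiplicity-counted ordering. Once the containment $\ran\big(P_{<\lambda_j(A)}\big)\subseteq\operatorname{span}\{u_1,\dots,u_{j-1}\}$ is secured, the remaining estimates are immediate from the spectral calculus, and the passage between the $\dom(A)$ and $\dom\big(|A|^{1/2}\big)$ formulations is handled uniformly by the inclusion $\dom(A)\subseteq\dom\big(|A|^{1/2}\big)$ together with attainment at the eigenfunctions.
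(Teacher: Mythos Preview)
Your proposal is correct and follows essentially the same route the paper indicates: the paper does not spell out a proof but states that \eqref{C.4.1} and \eqref{C.4.2} follow from the spectral theorem for $A$ (citing \cite[Theorem~1.1]{WS72}) and that the quadratic form versions are obtained ``in precisely the same manner.'' Your spectral-measure averaging argument, together with the observation that orthogonality to $u_1,\dots,u_{j-1}$ kills the spectral projection onto $(-\infty,\lambda_j(A))$, is exactly this spectral-theoretic proof made explicit, and your handling of repeated eigenvalues via the containment $\ran\big(P_{<\lambda_j(A)}\big)\subseteq\operatorname{span}\{u_1,\dots,u_{j-1}\}$ is the right bookkeeping.
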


Here, it is of course understood that $\lambda_{0,ess} = +\infty$ if $\sigma_{ess}(A) = \emptyset$.  The first parts in \eqref{C.4.1} and \eqref{C.4.2} are proven via the spectral theorem for $A$ in \cite[Theorem~1.1]{WS72}; their quadratic form versions---the second parts in \eqref{C.4.1} and \eqref{C.4.2}---are proven in precisely the same manner. 

The following related result will be employed in connection with strict domain monotonicity of the lowest eigenvalue of regular Sturm--Liouville operators with Dirichlet boundary conditions in Theorem \ref{t3.4}.

\begin{theorem} \lb{t3.2} 
Suppose $A$ is self-adjoint and bounded from below in $\cH$, that is, for some $c \in \bbR$, $A \geq c I_{\cH}$. In addition, assume that $\lambda_{0,ess}(A) = \inf (\sigma_{ess}(A)) \in( c, \infty) \cup \{\infty\}$ and that 
$\sigma_d(A) \cap [c, \lambda_{0,ess}(A)) \neq \emptyset$. We denote the eigenvalues of $A$ below 
$\lambda_{0,ess}(A)$ by $\{\lambda_j(A)\}_{j \in J}$, and the associated orthogonal eigenfunctions by $u_j$, 
$J \subseteq \bbN$ an appropriate $($finite or infinite\,$)$ index set, using the natural ordering $\lambda_1(A) \leq \lambda_2(A) \leq \cdots$ $($taking multiplicities into account\,$)$. If there exist vectors $v_j \in \dom(A)\backslash\{0\}$, respectively, 
$v_j \in \dom\big(|A|^{1/2}\big) \big \backslash\{0\}$, $j \in J$, such that
\begin{equation}
\mu_1(A) = \f{(v_1,A v_1)_{\cH}}{\|v_1\|^2_{\cH}} 
= \underset{0 \neq f \in \dom(A)}{\min} \f{(f,A f)_{\cH}}{\|f\|^2_{\cH}},  \lb{C.4.3}
\end{equation}
and, if $|J| \geq 2$,
\begin{equation}
\mu_j(A) = \f{(v_j,A v_j)_{\cH}}{\|v_j\|^2_{\cH}}
= \underset{\substack{0 \neq f \in \dom(A) \\ (f, v_k)_{\cH} = 0, \, 1 \leq k \leq j-1}}{\min} \f{(f,A f)_{\cH}}{\|f\|^2_{\cH}}, 
\quad j \in J, \; j \geq 2,     \lb{C.4.4} 
\end{equation}
respectively, 
\begin{align}
\mu_1(A) = R_A(v_1;\cH) =  \underset{0 \neq f \in \dom(|A|^{1/2})}{\min} R_A(f;\cH),    \lb{C.4.5} 
\end{align}
and, if $|J| \geq 2$,
\begin{align}
\mu_j(A) = R_A(v_j;\cH) =  \underset{\substack{0 \neq f \in \dom(|A|^{1/2}) \\ (f, v_k)_{\cH} = 0, \, 1 \leq k \leq j-1}}{\min} 
R_A(f;\cH),  \quad j \in J, \; j \geq 2,     \lb{C.4.6}
\end{align}
then $\mu_j(A) = \lambda_j(A)$ and $v_j$ is a corresponding eigenvector, $j \in J$. In particular, in the quadratic form context, where $v_j \in \dom\big(|A|^{1/2}\big)$, $v_j$ also satisfies $v_j \in \dom(A)$, $j \in J$. 
\end{theorem}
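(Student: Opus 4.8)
The plan is to prove the statement by induction on $j\in J$, establishing simultaneously that $\mu_j(A) = \lambda_j(A)$ and that the minimizer $v_j$ lies in $\ker(A - \lambda_j(A))$. Throughout I would work with the spectral family $\{E_\lambda\}_{\lambda\in\bbR}$ of $A$ furnished by the spectral theorem, so that for $f\in\dom(A)$ one has $(f,Af)_{\cH} = \int_{[c,\infty)}\lambda\, d\|E_\lambda f\|_{\cH}^2$, and likewise $R_A(f;\cH)\|f\|_{\cH}^2 = \int_{[c,\infty)}\lambda\, d\|E_\lambda f\|_{\cH}^2$ for $f\in\dom\big(|A|^{1/2}\big)$; in particular the operator and quadratic form versions are handled by one and the same argument, the only difference being the ambient domain. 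The recurring elementary fact I would isolate first is that if $f\perp \ran\big(E_{(-\infty,\lambda_j(A))}\big)$, then $(f, (A - \lambda_j(A)) f)_{\cH} = \int_{[\lambda_j(A),\infty)}(\lambda - \lambda_j(A))\, d\|E_\lambda f\|_{\cH}^2 \geq 0$, so that $R_A(f;\cH)\geq \lambda_j(A)$, with equality precisely when $d\|E_\lambda f\|_{\cH}^2$ is concentrated at $\{\lambda_j(A)\}$, i.e., precisely when $f\in\ker(A-\lambda_j(A))$.

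For the base case $j=1$, the hypothesis \eqref{C.4.3} (resp.\ \eqref{C.4.5}) together with Theorem \ref{t3.1} gives $\mu_1(A) = R_A(v_1;\cH) = \min_{0\neq f} R_A(f;\cH) = \lambda_1(A) = \inf(\sigma(A))$; since $v_1$ attains the infimum of the spectrum in the Rayleigh quotient, the isolated fact above (with the empty constraint, $\ran(E_{(-\infty,\lambda_1(A))}) = \{0\}$) forces $v_1\in\ker(A-\lambda_1(A))$.

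For the inductive step, assume that for $1\leq k\leq j-1$ we have shown $\mu_k(A)=\lambda_k(A)$ and $v_k\in\ker(A-\lambda_k(A))$; note that the $v_k$ are mutually orthogonal, since each $v_k$ satisfies the orthogonality constraints defining its own minimization problem. Writing $V := \text{span}\{v_1,\dots,v_{j-1}\}$, the key structural claim I would verify is that $V\supseteq \ran\big(E_{(-\infty,\lambda_j(A))}\big)$: indeed, listing the distinct eigenvalues below $\lambda_j(A)$ and observing that the non-decreasing ordering forces all eigenvalues strictly less than $\lambda_j(A)$ to occur among $\lambda_1,\dots,\lambda_{j-1}$ with their full multiplicities, the corresponding $v_k$ form, for each such eigenvalue, an orthogonal basis of its finite-dimensional eigenspace, and hence together span $\ran(E_{(-\infty,\lambda_j(A))})$. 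Consequently every $f$ with $f\perp v_1,\dots,v_{j-1}$ satisfies $f\perp\ran(E_{(-\infty,\lambda_j(A))})$, so by the isolated fact $R_A(f;\cH)\geq\lambda_j(A)$; passing to the minimum and using \eqref{C.4.4} (resp.\ \eqref{C.4.6}) yields $\mu_j(A)\geq\lambda_j(A)$. For the reverse inequality I would exhibit a feasible competitor of value $\lambda_j(A)$: since $V\subseteq\ran(E_{(-\infty,\lambda_j(A)]})$ gives $V = \ran(E_{(-\infty,\lambda_j(A))})\oplus\big(V\cap\ker(A-\lambda_j(A))\big)$ with $\dim\big(V\cap\ker(A-\lambda_j(A))\big)$ strictly less than $\dim\ker(A-\lambda_j(A))$ (only part of the $\lambda_j(A)$-eigenspace is used up by $v_1,\dots,v_{j-1}$), there is a nonzero $w\in\ker(A-\lambda_j(A))$ with $w\perp V$, whence $w$ is admissible and $R_A(w;\cH)=\lambda_j(A)$, giving $\mu_j(A)\leq\lambda_j(A)$. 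Thus $\mu_j(A)=\lambda_j(A)$, and since $v_j$ attains this value while obeying $v_j\perp V$, the equality case of the isolated fact forces $v_j\in\ker(A-\lambda_j(A))$, completing the induction. Finally, in the quadratic form context $v_j\in\ker(A-\lambda_j(A))$ automatically entails $Av_j=\lambda_j(A)v_j\in\cH$, i.e.\ $v_j\in\dom(A)$, which is the last assertion. I expect the main obstacle to be the bookkeeping around degenerate (multiple) eigenvalues—specifically the verification that $\text{span}\{v_1,\dots,v_{j-1}\}$ captures exactly the spectral subspace $\ran(E_{(-\infty,\lambda_j(A))})$—since it is precisely here that the distinction between the given minimizers $v_k$ and the reference eigenfunctions $u_k$ of Theorem \ref{t3.1} must be reconciled.
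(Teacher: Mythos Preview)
Your proof is correct and takes a genuinely different route from the paper. The paper proceeds via a first-variation (Euler--Lagrange) argument: perturbing the minimizer $v_j$ by $\varepsilon w$ with $w$ ranging over the admissible constraint set, the vanishing of the linear term in $\varepsilon$ is rewritten, using the functional calculus for $(A - c'I_{\cH})^{1/2}$ with $c' < c$, as $(A - c'I_{\cH})^{1/2}v_j + [c' - \mu_j(A)](A - c'I_{\cH})^{-1/2}v_j = 0$, from which $v_j \in \dom(A)$ and $Av_j = \mu_j(A)v_j$ follow directly; only afterwards is $\mu_j(A)$ identified with $\lambda_j(A)$. You invert this order: you first pin down $\mu_j(A) = \lambda_j(A)$ by comparing the constraint space $V = \operatorname{span}\{v_1,\dots,v_{j-1}\}$ against the spectral subspace $\ran\big(E_{(-\infty,\lambda_j(A))}\big)$, and then read off that $v_j$ is an eigenvector from the equality case in the spectral integral. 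Your route handles $\dom(A)$ and $\dom\big(|A|^{1/2}\big)$ uniformly via the spectral representation and makes the degenerate-eigenvalue bookkeeping explicit; indeed, the paper's step
\[
\min_{\substack{0\neq f\\ f\perp v_1,\dots,v_{j-1}}} R_A(f;\cH) \;=\; \min_{\substack{0\neq f\\ f\perp u_1,\dots,u_{j-1}}} R_A(f;\cH)
\]
tacitly relies on exactly the span comparison you isolate (and is not literally an equality of constraint sets when $\lambda_{j-1}(A)=\lambda_j(A)$, only of the resulting minima). Conversely, the paper's approach has the merit of deriving the eigen-equation directly from the minimality condition, a pattern that exports more readily to variational problems outside the spectral-theorem framework.
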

\begin{proof}
Since \eqref{C.4.3} and \eqref{C.4.4} are proven in \cite[Theorem~1.2]{WS72}, we focus on the quadratic form versions \eqref{C.4.5}, \eqref{C.4.6}.

Since $\mu_1(A)$ is a minimum, one obtains 
\begin{align}
\begin{split} 
\big(|A|^{1/2} (v_1 + \varepsilon w), \sgn(A) (|A|^{1/2} (v_1 + \varepsilon w)\big)_{\cH} \geq 
\mu_1 ((v_1 + \varepsilon w), (v_1 + \varepsilon w))_{\cH},& \\ 
\varepsilon \in \bbR, \; w \in \dom\big(|A|^{1/2}\big),&
\end{split}
\end{align}
and hence, using $\mu_1(A) = \big(|A|^{1/2}v_1, \sgn(A) |A|^{1/2} v_1\big)_{\cH}\big/\|v_1\|^2_{\cH}$, one obtains
\begin{align}
0 & \leq \varepsilon^2 \big[\big(|A|^{1/2} w, \sgn(A) |A|^{1/2} w\big)_{\cH} - \mu_1(A) \|w\|_{\cH}^2\big]   \no \\
& \quad + 2 \varepsilon \Re\big(\big(|A|^{1/2} v_1, \sgn(A) |A|^{1/2} w\big)_{\cH}  - \mu_1(A) (v_1,w)_{\cH}\big).  \no 
\end{align}
Since $\varepsilon \in \bbR$ was arbitrary, one concludes with $c' < c$, that
\begin{align} 
0 & = \big(|A|^{1/2} v_1, \sgn(A) (|A|^{1/2} w\big)_{\cH} - \mu_1(A) (v_1,w)_{\cH}    \no \\
& = \big((A - c' I_{\cH})^{1/2} v_1, (A - c' I_{\cH})^{1/2} w\big)_{\cH}    \no \\
& \quad + [c' - \mu_1(A)] \big((A - c' I_{\cH})^{-1/2} v_1, (A - c' I_{\cH})^{1/2} w\big)_{\cH}
\end{align}
and since $\ran\big( (A - c' I_{\cH})^{1/2}\big) = \cH$ (due to the choice $c' < c$), one obtains
\begin{equation}
(A - c' I_{\cH})^{1/2} v_1 +  [c' - \mu_1(A)] (A - c' I_{\cH})^{-1/2} v_1 = 0.
\end{equation}
Thus, $(A - c' I_{\cH})^{1/2} v_1 \in \dom\big(|A|^{1/2}\big)$, equivalently, $v_1 \in \dom(A)$, and hence 
\begin{equation}
A v_1 = \mu_1(A) v_1.
\end{equation}
Since $\mu_1(A)$ is a discrete eigenvalue of $A$, it follows that $\lambda_1(A) \leq \mu_1(A)$. On the other hand, 
\begin{align}
\mu_1(A) = \underset{0 \neq f \in \dom(|A|^{1/2})}{\min} R_A(f;\cH) \leq R_A(u_1;\cH) = \lambda_1(A),
\end{align}
and hence $\mu_1(A) = \lambda_1(A)$.

Next, suppose $j \in J$, $j \geq 2$, is fixed, and $\mu_k(A)$ are eigenvalues of $A$ with corresponding eigenvectors $v_k$, $1 \leq k \leq j-1$, that is, $A v_k = \mu_k(A) v_k$, $1 \leq k \leq j-1$. By construction, 
\begin{equation}
(v_k, v_{\ell})_{\cH} = \|v_k\|_{\cH}^2 \, \delta_{k,\ell}, \quad 1 \leq k, \ell \leq j.
\end{equation}

Assuming $\varepsilon \in \bbR$ and 
\begin{equation}
w \in \dom\big(|A|^{1/2}\big), \quad (w,v_k)_{\cH} =0, \; 1 \leq k \leq j-1,    \lb{C.4.11} 
\end{equation}
one again concludes
\begin{equation} 
\big(|A|^{1/2} (v_j + \varepsilon w), \sgn(A) (|A|^{1/2} (v_j + \varepsilon w)\big)_{\cH} \geq 
\mu_j ((v_j + \varepsilon w), (v_1 + \varepsilon w))_{\cH}
\end{equation}
and hence once more,
\begin{align} 
0 & = \big(|A|^{1/2} v_j, \sgn(A) (|A|^{1/2} w\big)_{\cH} - \mu_j(A) (v_1,w)_{\cH}    \no \\
& = \big((A - c' I_{\cH})^{1/2} v_j, (A - c' I_{\cH})^{1/2} w\big)_{\cH}    \no \\
& \quad + [c' - \mu_j(A)] \big((A - c' I_{\cH})^{-1/2} v_j, (A - c' I_{\cH})^{1/2} w\big)_{\cH}
\end{align}
for all $w \in \cH$ satisfying the conditions in \eqref{C.4.11}. Since
\begin{equation}
0 = (w,v_k)_{\cH} = \big((A - c' I_{\cH})^{1/2} w, (A - c' I_{\cH})^{-1/2} v_k\big)_{\cH}, 
\quad 1 \leq k \leq j-1,
\end{equation}
one concludes that
\begin{equation}
(A - c' I_{\cH})^{1/2} v_j + [c' - \mu_j(A)] (A - c' I_{\cH})^{-1/2} v_j 
= \sum_{k=1}^{j-1} c_k (A - c' I_{\cH})^{-1/2} v_k   \lb{C.4.16}
\end{equation}
for some $c_k \in \bbC$. Taking the scalar product of both sides in \eqref{C.4.16} with the 
element $(A - c' I_{\cH})^{1/2} v_\ell$ yields
\begin{align}
c_{\ell} &= \big((A - c' I_{\cH})^{1/2} v_{\ell}, (A - c' I_{\cH})^{1/2} v_j \big)_{\cH}     \no \\
& \quad + [c' - \mu_j(A)] \big((A - c' I_{\cH})^{1/2} v_{\ell}, (A - c' I_{\cH})^{-1/2} v_j\big)_{\cH}    \no \\
&= \big((A - c' I_{\cH})^{1/2} v_{\ell}, (A - c' I_{\cH})^{1/2} v_j \big)_{\cH} + [c' - \mu_j(A)] (v_{\ell}, v_j\big)_{\cH}   \no \\
&= \big((A - c' I_{\cH})^{1/2} v_{\ell}, (A - c' I_{\cH})^{1/2} v_j \big)_{\cH} \no \\
&= \big(|A|^{1/2} v_{\ell}, \sgn(A) |A|^{1/2} v_j\big)_{\cH}  - c' (v_{\ell}, v_j)_{\cH}    \no \\
&= \big(|A|^{1/2} v_{\ell}, \sgn(A) |A|^{1/2} v_j\big)_{\cH} \no \\ 
&= \mu_{\ell}(A) (v_{\ell}, v_j)_{\cH} = 0, \quad 1 \leq \ell \leq j-1,
\end{align}
and hence results in
\begin{equation}
(A - c' I_{\cH})^{1/2} v_j +  [c' - \mu_j(A)] (A - c' I_{\cH})^{-1/2} v_j = 0.
\end{equation}
Once more this implies 
$(A - c' I_{\cH})^{1/2} v_j \in \dom\big(|A|^{1/2}\big)$, equivalently, $v_j \in \dom(A)$, and hence 
\begin{equation}
A v_j = \mu_j(A) v_j.
\end{equation}
Finally, suppose that $\mu_k(A) = \lambda_k(A)$, $1 \leq k \leq j-1$. Since $\mu_j(A)$ is a discrete eigenvalue of $A$, it follows that $\lambda_j(A) \leq \mu_j(A)$. On the other hand, 
\begin{align}
\mu_j(A) &= \underset{\substack{0 \neq f \in \dom(|A|^{1/2}) \\ (f, v_k)_{\cH} = 0, \, 1 \leq k \leq j-1}}{\min} 
R_A(f;\cH)     \no \\
&= \underset{\substack{0 \neq f \in \dom(|A|^{1/2}) \\ (f, u_k)_{\cH} = 0, \, 1 \leq k \leq j-1}}{\min} 
R_A(f;\cH)     \no \\
&\leq R_A(u_j;\cH) = \lambda_j(A),
\end{align}
and hence $\mu_j(A) = \lambda_j(A)$. 
\end{proof}

\begin{remark} \lb{r3.3} 
Compared to the min--max (resp., max--min) theorem, Theorems \ref{t3.1}
 and \ref{t3.2}
 have the drawback that they directly involve {\it a priori} knowledge of the eigenvectors $u_j$, or, of the vectors $v_j$, $j \in J$, rather than $j$-dimensional (resp., $j-1$-dimensional) subspaces, $j \geq 2$. Still, Theorem \ref{t3.2}
 is instrumental in proving strict domain monotonicity for 
$\lambda_1(A)$ (as opposed to just domain monotonicity) as will be illustrated in the proof of Theorem \ref{t3.4} below. \hfill $\diamond$
\end{remark}

\begin{theorem} \lb{t3.4} 
Assume Hypothesis \ref{h2.1} and suppose that $\tau$ is regular on $(a,b)$.  Let $T_{F,(c,d)}$ denote the Friedrichs extension of $T_{min,(c,d)}$, where $(c,d)\subset (a,b)$ and $c<d$; that is, either $a\leq c$ and $d<b$, or else $a<c$ and $d\leq b$, so that $(c,d)$ is strictly contained in $(a,b)$.  Then
\begin{equation}
\lambda_1(T_{F,(a,b)}) < \lambda_1(T_{F,(c,d)}).
\end{equation}
\end{theorem}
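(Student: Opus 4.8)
The plan is to derive domain monotonicity from the variational characterization of $\lambda_1$ and then to upgrade it to \emph{strict} monotonicity by invoking the rigidity statement of Theorem~\ref{t3.2}. First I would use the zero-extension of the principal eigenfunction as a trial function. Let $\psi$ be a normalized principal eigenfunction of $T_{F,(c,d)}$, so that $T_{F,(c,d)}\psi = \lambda_1(T_{F,(c,d)})\psi$ and, by the Dirichlet characterization \eqref{11.10.101}, $\psi(c)=\psi(d)=0$. Define $\wti\psi$ on $(a,b)$ by $\wti\psi = \psi$ on $(c,d)$ and $\wti\psi = 0$ on $(a,b)\setminus(c,d)$. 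Because $\psi$ vanishes at both endpoints $c,d$, the extension $\wti\psi$ is absolutely continuous on $[a,b]$ with $\wti\psi(a)=\wti\psi(b)=0$, and $(rp)^{-1/2}\wti\psi^{[1]}\in L^2((a,b);r\,dx)$ since it coincides with the corresponding expression for $\psi$ on $(c,d)$ and vanishes elsewhere; hence $\wti\psi\in\dom(Q_{F,(a,b)})=\dom\big(|T_{F,(a,b)}|^{1/2}\big)$ by \eqref{2.15}.

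Next, since the integrand defining $Q_F$ in \eqref{2.14} is supported in $(c,d)$, one has $Q_{F,(a,b)}(\wti\psi,\wti\psi)=Q_{F,(c,d)}(\psi,\psi)=\lambda_1(T_{F,(c,d)})\,\|\psi\|^2_{L^2((c,d);r\,dx)}$ and $\|\wti\psi\|^2_{L^2((a,b);r\,dx)}=\|\psi\|^2_{L^2((c,d);r\,dx)}$. Rayleigh's principle \eqref{C.4.1} then gives $\lambda_1(T_{F,(a,b)})\leq R_{T_{F,(a,b)}}\big(\wti\psi;L^2((a,b);r\,dx)\big)=\lambda_1(T_{F,(c,d)})$, which is the non-strict domain monotonicity.

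The crux is ruling out equality, and this is where Theorem~\ref{t3.2} is decisive. Arguing by contradiction, suppose $\lambda_1(T_{F,(a,b)})=\lambda_1(T_{F,(c,d)})$. Then the previous paragraph shows that $\wti\psi\in\dom\big(|T_{F,(a,b)}|^{1/2}\big)\setminus\{0\}$ attains the minimum of the Rayleigh quotient, i.e.\ \eqref{C.4.5} holds with $v_1=\wti\psi$ and $\mu_1=\lambda_1(T_{F,(a,b)})$ (the spectral hypotheses of Theorem~\ref{t3.2} being satisfied since, in the regular case, $T_{F,(a,b)}$ has purely discrete spectrum). The quadratic-form conclusion of Theorem~\ref{t3.2} then forces $\wti\psi\in\dom(T_{F,(a,b)})$ with $T_{F,(a,b)}\wti\psi=\lambda_1(T_{F,(a,b)})\wti\psi$; in particular $\wti\psi$ is a nonzero solution of $(\tau-\lambda_1(T_{F,(a,b)}))u=0$ on all of $(a,b)$.

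Finally I would derive the contradiction from unique continuation. Because $(c,d)$ is strictly contained in $(a,b)$, at least one of its endpoints is interior to $(a,b)$; assume without loss of generality $d<b$ (the case $a<c$ being symmetric), so that $\wti\psi\equiv 0$ on the nonempty open interval $(d,b)$. Picking any $x_0\in(d,b)$ we have $\wti\psi(x_0)=\wti\psi^{[1]}(x_0)=0$, whence the uniqueness assertion for the initial-value problem (Theorem~\ref{t2.3}) forces $\wti\psi\equiv 0$ on $(a,b)$, contradicting that $\wti\psi$ is a nonzero eigenfunction. (Equivalently, $\wti\psi\in\dom(T_{F,(a,b)})\subseteq\mathfrak{D}_\tau((a,b))$ would require $\wti\psi^{[1]}\in AC_{loc}((a,b))$, whereas $\wti\psi^{[1]}$ jumps from $\psi^{[1]}(d)\neq0$ to $0$ across $d$, the nonvanishing $\psi^{[1]}(d)\neq0$ following again from Theorem~\ref{t2.3}.) This contradiction yields the strict inequality. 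I expect the main obstacle to be precisely this passage from $\leq$ to $<$: the trial-function computation alone delivers only non-strict monotonicity, and strictness genuinely requires the rigidity of Theorem~\ref{t3.2} coupled with the unique-continuation property of solutions of $\tau u=\lambda u$.
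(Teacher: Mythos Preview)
Your proof is correct and follows essentially the same approach as the paper: zero-extend the principal eigenfunction of $T_{F,(c,d)}$ to obtain a trial function in $\dom(Q_{F,(a,b)})$, deduce $\lambda_1(T_{F,(a,b)})\leq\lambda_1(T_{F,(c,d)})$ from Rayleigh's principle, and then rule out equality by observing that the extended function would be a Rayleigh-quotient minimizer, hence (by Theorem~\ref{t3.2}) a genuine eigenfunction, which vanishes on a nonempty open subinterval and is therefore identically zero by Theorem~\ref{t2.3}. The parenthetical alternative via the jump of $\wti\psi^{[1]}$ at $d$ is a nice additional observation not present in the paper's proof.
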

\begin{proof}
For simplicity we shall assume that $a\leq c$ and $d<b$.  The case $a<c$ and $d\leq b$ is analogous.  We begin with the following simple observation.  If $f\in \dom\big(|T_{F,(c,d)}|^{1/2}\big)$, then the characterization in \eqref{2.15} implies the function $\widetilde{f}$ defined on $(a,b)$ by
\begin{equation}\lb{2.17}
\widetilde{f}(x)=
\begin{cases}
f(x),&x\in(c,d),\\
0,&x\in (a,b)\backslash (c,d),
\end{cases}
\end{equation}
belongs to $\dom\big(|T_{F,(a,b)}|^{1/2}\big)$ and \eqref{2.14} yields
\begin{equation}\lb{2.18}
Q_{F,(c,d)}(f,f) = Q_{F,(a,b)}\big(\widetilde{f},\widetilde{f}\big).
\end{equation}
Hence, by the variational characterization of the principal eigenvalue (cf.~Theorem \ref{t3.1}),
\begin{align}
\lambda_1(T_{F,(a,b)})&= \min_{0\neq f\in \dom(|T_{F,(a,b)}|^{1/2})}R_{T_{F,(a,b)}}\big(f;L^2((a,b);r\,dx)\big)\no\\
&\leq \min_{0\neq f\in \dom(|T_{F,(c,d)}|^{1/2})}R_{T_{F,(c,d)}}\big(f;L^2((c,d);r\,dx)\big)\no\\
&= \lambda_1(T_{F,(c,d)}).\lb{2.19}
\end{align}

To prove the inequality in \eqref{2.19} is strict, we argue by way of contradiction and assume that $\lambda_1(T_{F,(a,b)}) = \lambda_1(T_{F,(c,d)})$.  Let $u_{1,F,(c,d)}\in \dom(T_{F,(c,d)})$ denote the eigenfunction (unique up to nonzero constant multiples) corresponding to the principal eigenvalue $\lambda_1(T_{F,(c,d)})$ of $T_{F,(c,d)}$ and introduce the test function $v_{1,F,(a,b)}$ by
\begin{equation}
v_{1,F,(a,b)}(x) := \widetilde{u}_{1,F,(c,d)}(x) = 
\begin{cases}
u_{1,F,(c,d)}(x),& x\in(c,d),\\
0,& x\in(a,b)\backslash(c,d).
\end{cases}
\end{equation}
By the observation at the beginning of the proof,
\begin{equation}
v_{1,F,(a,b)}\in \dom(Q_{F,(a,b)}) = \dom\big(|T_{F,(a,b)}|^{1/2} \big),
\end{equation}
and \eqref{2.18} yields
\begin{align}
\lambda_1(T_{F,(a,b)}) &= R_{T_{F,(c,d)}}\big(u_{1,F,(c,d)};L^2((c,d);r\,dx)\big) \\  
&= R_{T_{F,(a,b)}}\big(v_{1,F,(a,b)};L^2((a,b);r\,dx)\big)\no\\
&= \lambda_1(T_{F,(c,d)}) \no\\
&= \min_{0\neq f\in \dom(|T_{F,(a,b)}|^{1/2})}R_{T_{F,(a,b)}}\big(f;L^2((a,b);r\,dx)\big).\no
\end{align}
Therefore, $v_{1,F,(a,b)}$ is a minimizer of the Rayleigh quotient for $T_{F,(a,b)}$.  In consequence, by Theorem \ref{t3.2}, $v_{1,F,(a,b)}$ is an eigenfunction of $T_{F,(a,b)}$ corresponding to the eigenvalue 
$\lambda_1(T_{F,(a,b)})$.  In particular, $(\tau - \lambda_1(T_{F,(a,b)}))v_{1,F,(a,b)}=0$ on $(a,b)$.  However, since $v_{1,F,(a,b)}$ vanishes identically on the open interval $(d,b)$, Theorem \ref{t2.3}
 implies $v_{1,F,(a,b)} \equiv 0$, which is a contradiction.
\end{proof}

\begin{remark} \lb{r3.5}
One recognizes the special case of a unique continuation argument used in the final part of the proof of Theorem \ref{t3.4}. For an extension of Theorem \ref{t3.4}
 to multi-dimensional Schr\"odinger operators on open domains $\Omega \subset \bbR^n$ with Dirichlet boundary conditions on $\partial \Omega$ under a natural spectral assumption and with Lebesgue measure replaced by Bessel capacity, see \cite{GZ94}; we will return to the issue of Bessel-type capacities in the present four-coefficient Sturm--Liouville operator context elsewhere. \hfill $\diamond$
\end{remark}

\section{Characterization of the Lower Bounds of the Friedrichs Extension in Terms of Strictly Positive Solutions} \lb{s4}

Returning to the general singular context, in this section we provide the characterization of lower bounds for $T_F$,  that is, 
$T_F\geq \lambda_0I_{L^2((a,b);r\,dx)}$ (equivalently, $T_{min}\geq \lambda_0I_{L^2((a,b);r\,dx)}$) in terms of the existence of a strictly positive solution $u_0(\lambda_0,\,\cdot\,)>0$ of $(\tau - \lambda_0)u = 0$ on $(a,b)$.

We begin by recalling an elementary formula for the construction of a second linearly independent solution of $(\tau - \lambda)u=0$.

\begin{lemma} \lb{l4.1} 
Assume Hypothesis \ref{h2.1}.  Suppose $\lambda \in \bbR$, $(c,d)\subseteq (a,b)$, and $x_0\in (c,d)$.  If $u_1(\lambda,\,\cdot\,)$ is a solution of $(\tau - \lambda)u = 0$ on $(c,d)$ that does not vanish in $(c,d)$, then $u_2(\lambda,\,\cdot\,)$ defined by
\begin{equation}
u_2(\lambda,x) = u_1(\lambda,x)\int_{x_0}^x\frac{dt}{p(t)u_1(\lambda,t)^2},\quad x\in (c,d),
\end{equation}
is also a solution of $(\tau - \lambda) u = 0$ on $(c,d)$.  Moreover, $u_1(\lambda,\,\cdot\,)$ and $u_2(\lambda,\,\cdot\,)$ are linearly independent on $(c,d)$.
\end{lemma}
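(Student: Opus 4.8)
The plan is to recognize the formula for $u_2(\lambda,\dott)$ as the classical reduction-of-order (variation of parameters) construction, adapted to the quasi-derivative formalism of \eqref{2.3}, and to verify directly that it produces a second solution. Writing $I(x) = \int_{x_0}^x dt\,[p(t) u_1(\lambda,t)^2]^{-1}$, so that $u_2(\lambda,\dott) = u_1(\lambda,\dott)\, I$, I would first establish membership in $\mathfrak{D}_{\tau}((c,d))$ and then compute $(\tau - \lambda) u_2$ and check that it vanishes identically.

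First I would address regularity. Since $u_1(\lambda,\dott)$ is continuous and nonvanishing on $(c,d)$, the integrand $[p\, u_1(\lambda,\dott)^2]^{-1}$ equals a locally bounded function times $p^{-1} \in L^1_{loc}((c,d);dx)$, hence lies in $L^1_{loc}((c,d);dx)$; consequently $I \in AC_{loc}((c,d))$ with $I' = [p\, u_1(\lambda,\dott)^2]^{-1}$ a.e. It follows that $u_2(\lambda,\dott) = u_1(\lambda,\dott)\, I$ is a product of $AC_{loc}$ functions and hence lies in $AC_{loc}((c,d))$. The key intermediate computation is the quasi-derivative: using $u_2' = u_1' I + u_1 I'$ together with $f^{[1]} = p[f'+sf]$, the $I'$ contribution telescopes to give
\begin{equation*}
u_2^{[1]} = u_1^{[1]}\, I + \frac{1}{u_1},
\end{equation*}
which is again a sum of $AC_{loc}$ functions (here one uses that $1/u_1(\lambda,\dott) \in AC_{loc}((c,d))$, since $u_1(\lambda,\dott)$ is continuous and bounded away from zero on compact subintervals). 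This establishes $u_2(\lambda,\dott) \in \mathfrak{D}_{\tau}((c,d))$.

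Next I would differentiate $u_2^{[1]}$ and insert the result into $\tau$. Differentiating the displayed formula and then substituting $u_1^{[1]} = p[u_1' + s u_1]$ causes the $u_1'/u_1^2$ contributions to cancel, leaving $(u_2^{[1]})' = (u_1^{[1]})' I + s/u_1$. Assembling $r(\tau - \lambda)u_2 = -(u_2^{[1]})' + s u_2^{[1]} + q u_2 - \lambda r u_2$, the two $s/u_1$ terms cancel and every remaining term carries the common factor $I$, so that
\begin{equation*}
r(\tau - \lambda) u_2 = I\big[-(u_1^{[1]})' + s u_1^{[1]} + q u_1 - \lambda r u_1\big] = I\, r\,(\tau - \lambda)u_1 = 0,
\end{equation*}
because $u_1(\lambda,\dott)$ solves $(\tau - \lambda)u = 0$. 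This confirms that $u_2(\lambda,\dott)$ is a solution on $(c,d)$.

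Finally, for linear independence I would compute the (modified) Wronskian directly: substituting the expression for $u_2^{[1]}$ gives $W(u_1,u_2) = u_1 u_2^{[1]} - u_1^{[1]} u_2 = u_1 u_1^{[1]} I + 1 - u_1^{[1]} u_1 I \equiv 1$, which is nowhere zero, so by the linear-independence criterion for the Wronskian recorded in Section~\ref{s2} (following \eqref{2.4}) the solutions $u_1(\lambda,\dott)$ and $u_2(\lambda,\dott)$ are linearly independent. I expect the only genuine obstacle to be the bookkeeping under the low-regularity assumption of Hypothesis~\ref{h2.1}: one must ensure that every differentiation is justified a.e.\ and that each function entering $u_2^{[1]}$ and its derivative is genuinely locally absolutely continuous (rather than merely differentiable a.e.), so that the product and quotient rules producing the cancellations are valid in the $AC_{loc}$ sense demanded by the definition of $\mathfrak{D}_{\tau}((c,d))$.
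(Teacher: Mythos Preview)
Your proof is correct. The paper does not actually supply a proof of Lemma~\ref{l4.1}; it is stated as a recalled elementary fact (``We begin by recalling an elementary formula\ldots''), so there is no argument to compare against. Your direct reduction-of-order verification---computing $u_2^{[1]} = u_1^{[1]} I + 1/u_1$, differentiating to obtain $(u_2^{[1]})' = (u_1^{[1]})' I + s/u_1$, and then assembling $r(\tau-\lambda)u_2 = I\cdot r(\tau-\lambda)u_1 = 0$---is exactly the standard argument one would supply, and your attention to the $AC_{loc}$ bookkeeping under Hypothesis~\ref{h2.1} is appropriate.
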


\begin{theorem} \lb{t4.2} 
Assume Hypothesis \ref{h2.1} and let $\lambda\in \bbR$.  The following statements $(i)$--$(iv)$ are equivalent.\\[1mm]
$(i)$  $\tau - \lambda$ is disconjugate on $(a,b)$.\\[1mm]
$(ii)$ For every pair of points $x_1,x_2\in (a,b)$, $x_1\neq x_2$, and arbitrary $u_1,u_2\in \bbR$, there exists a unique real-valued solution $u_*(\lambda,\,\cdot\,)$ of $(\tau - \lambda)u = 0$ on $(a,b)$ such that
\begin{equation}\lb{3.1}
u_*(\lambda,x_1)=u_1\quad \text{and}\quad u_*(\lambda,x_2)=u_2.
\end{equation}
$(iii)$  Every pair of real-valued linearly independent solutions $u_j(\lambda,\,\cdot\,)$, $j\in \{1,2\}$, of $(\tau - \lambda)u = 0$ on $(a,b)$ satisfies
\begin{equation}\lb{3.2}
u_1(\lambda,x_1)u_2(\lambda,x_2) - u_1(\lambda,x_2)u_2(\lambda,x_1) \neq 0
\end{equation}
for every pair of points $x_1,x_2\in (a,b)$ with $x_1\neq x_2$.\\[1mm]
$(iv)$ There exists a strictly positive solution $u_0(\lambda,\,\cdot\,)>0$ of $(\tau - \lambda)u = 0$ on $(a,b)$.
\end{theorem}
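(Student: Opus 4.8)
The plan is to establish the four equivalences through the cycle $(iv)\Rightarrow(i)\Rightarrow(iii)\Leftrightarrow(ii)\Rightarrow(iv)$, isolating the purely linear-algebraic equivalences $(i)\Leftrightarrow(ii)\Leftrightarrow(iii)$ from the two analytically substantive implications $(iv)\Rightarrow(i)$ and $(ii)\Rightarrow(iv)$. Throughout I would use that the solution space of $(\tau-\lambda)u=0$ is two-dimensional and that initial-value problems are uniquely solvable by Theorem \ref{t2.3}; fixing one basis $\{u_1(\lambda,\dott),u_2(\lambda,\dott)\}$ of real-valued solutions, every solution is a real linear combination of the two.

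For $(ii)\Leftrightarrow(iii)$ I would observe that a solution $u_*=\alpha u_1(\lambda,\dott)+\beta u_2(\lambda,\dott)$ meets the interpolation conditions $u_*(\lambda,x_1)=u_1$, $u_*(\lambda,x_2)=u_2$ precisely when $(\alpha,\beta)$ solves a $2\times 2$ linear system whose coefficient determinant is exactly the quantity $D(x_1,x_2):=u_1(\lambda,x_1)u_2(\lambda,x_2)-u_1(\lambda,x_2)u_2(\lambda,x_1)$ appearing in \eqref{3.2}; unique solvability for all right-hand sides is equivalent to $D(x_1,x_2)\neq 0$, which gives the content of \eqref{3.1}. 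Since passing to any other basis multiplies $D$ by the nonzero determinant of the transition matrix, the nonvanishing of $D$ is basis-independent, which is why $(iii)$ may legitimately be phrased for \emph{every} pair of linearly independent solutions. For $(i)\Leftrightarrow(iii)$ I would use the zero--determinant correspondence: $D(x_1,x_2)=0$ holds if and only if some nontrivial combination $\alpha u_1+\beta u_2$ vanishes at both $x_1$ and $x_2$, i.e.\ if and only if some nontrivial real-valued solution has two distinct zeros in $(a,b)$. Thus $\tau-\lambda$ fails to be disconjugate (Definition \ref{d2.16}) exactly when $D$ vanishes for some $x_1\neq x_2$.

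For $(iv)\Rightarrow(i)$, given a solution $u_0(\lambda,\dott)>0$ on $(a,b)$ and any nontrivial real-valued solution $v$, I would exploit the reduction-of-order identity underlying Lemma \ref{l4.1}. Since $W(u_0,v)$ is constant, a short computation using $v'=v^{[1]}/p-sv$ and the analogous relation for $u_0$ yields $(v/u_0)'=W(u_0,v)/(p\,u_0^2)$ a.e.\ on $(a,b)$. As $p>0$ and $u_0>0$, the right-hand side has constant sign, so $v/u_0\in AC_{loc}((a,b))$ is either constant (when $W(u_0,v)=0$, i.e.\ $v$ is a multiple of $u_0$ and hence zero-free) or strictly monotone; in either case $v/u_0$, and therefore $v$, has at most one zero in $(a,b)$. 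Hence $\tau-\lambda$ is disconjugate.

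The substantive step is $(ii)\Rightarrow(iv)$, the construction of a globally positive solution, and this is where I expect the main difficulty to lie. Fix $x_0\in(a,b)$ and exhaust $(a,b)$ by intervals $[a_n,b_n]$ with $a<a_n\downarrow a$ and $x_0<b_n\uparrow b$. By $(ii)$ there is a unique solution $w_n$ with $w_n(a_n)=w_n(b_n)=1$; since $w_n$ has at most one zero by $(i)$ (already shown equivalent to $(ii)$), a solution positive at both endpoints cannot acquire a zero in $[a_n,b_n]$ without being forced to change sign twice, so $w_n>0$ on $[a_n,b_n]$. Normalizing so that $(w_n(x_0),w_n^{[1]}(x_0))$ has unit Euclidean norm (which preserves positivity) and passing to a subsequence along which this vector converges to a unit vector $(\gamma,\delta)$, continuous dependence of solutions on their initial data (the standard companion to Theorem \ref{t2.3}) shows that the normalized $w_n$ converge locally uniformly to the solution $u_0$ with $u_0(x_0)=\gamma$, $u_0^{[1]}(x_0)=\delta$. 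On each compact subinterval the $w_n$ are eventually positive, whence $u_0\geq 0$ on $(a,b)$, and $u_0\not\equiv 0$ because its initial data is a unit vector. The delicate concluding point is the upgrade from $u_0\geq 0$ to $u_0>0$: at any zero $\xi$ of the nontrivial $u_0$ one must have $u_0^{[1]}(\xi)\neq 0$ by uniqueness (Theorem \ref{t2.3}), and integrating the first-order system $u_0'=u_0^{[1]}/p-s u_0$ across $\xi$ then shows $u_0$ takes values of opposite sign on the two sides of $\xi$, contradicting $u_0\geq 0$. Hence $u_0>0$ on $(a,b)$, establishing $(iv)$. The principal obstacles are therefore the compactness/continuous-dependence passage to the limit and this final sign analysis, both to be carried out with the low regularity $p^{-1},q,r,s\in L^1_{loc}((a,b);dx)$ of the coefficients in mind.
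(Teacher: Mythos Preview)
Your argument is correct. The equivalences $(i)\Leftrightarrow(ii)\Leftrightarrow(iii)$ and the implication $(iv)\Rightarrow(i)$ are handled exactly as in the paper (the paper phrases the latter via Lemma~\ref{l4.1}, but your monotonicity of $v/u_0$ is the same computation). The only substantive divergence is in producing the strictly positive solution.

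The paper proves $(i)\Rightarrow(iv)$ by a short Wronskian contradiction: assuming disconjugacy together with the failure of $(iv)$, every nontrivial real solution has \emph{exactly} one zero; choosing two linearly independent solutions $u_1,u_2$ with zeros at distinct points $x_1,x_2$ and evaluating the (constant) Wronskian $W(u_1,u_2)$ separately at $x_1$ and at $x_2$ yields two expressions that are negatives of one another, forcing $W(u_1,u_2)=0$, a contradiction. This avoids any limiting process entirely. Your route, by contrast, is the classical exhaustion/compactness construction: manufacture solutions $w_n>0$ on $[a_n,b_n]\uparrow(a,b)$ via the two-point interpolation of $(ii)$, normalize the initial data at a fixed $x_0$, extract a convergent subsequence, and upgrade the nonnegative limit to strict positivity via the sign-change property at a simple zero. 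Your version is more constructive and transparently produces the solution, at the cost of invoking continuous dependence on initial data (a routine companion to Theorem~\ref{t2.3}) and a compactness step; the paper's version is shorter and uses nothing beyond constancy of the Wronskian. One minor point: in your positivity of $w_n$ on $[a_n,b_n]$ you implicitly use that a nontrivial solution changes sign at a zero; you only justify this later, so for clean logical order you might either move that sign-change lemma earlier or, more simply, note that by $(iii)$ a nontrivial solution vanishing at some $\xi$ cannot vanish elsewhere and then invoke the quotient $w_n/v$ with $v(\xi)=1$, $v^{[1]}(\xi)=0$ to see the sign change directly.
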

\begin{proof}
To prove the equivalence of statements $(i)$--$(iii)$, let $u_j(\lambda,\,\cdot\,)$, $j\in\{1,2\}$, denote linearly independent real-valued solutions of $(\tau - \lambda)u = 0$ on $(a,b)$.  Any real-valued solution $u_*(\lambda,\,\cdot\,)$ of $(\tau - \lambda)u = 0$ is then of the form
\begin{equation}\lb{3.3}
u_*(\lambda,\,\cdot\,) = c_1u_1(\lambda,\,\cdot\,) + c_2u_2(\lambda,\,\cdot\,),\quad c_j\in \bbR,\, j\in \{1,2\}.
\end{equation}
Let $x_1,x_2\in (a,b)$, $x_1\neq x_2$.  The system
\begin{equation}\lb{3.4}
\begin{pmatrix}
u_1(\lambda,x_1)	&		u_2(\lambda,x_1)	\\
u_1(\lambda,x_2)	&		u_2(\lambda,x_2)	\\
\end{pmatrix}
\begin{pmatrix}
c_1\\
c_2
\end{pmatrix}
=
\begin{pmatrix}
u_1\\
u_2
\end{pmatrix}
\end{equation}
has a unique solution for all values $u_j\in \bbR$, $j\in \{1,2\}$, if and only if \eqref{3.2} holds.  This yields the equivalence of statements $(ii)$ and $(iii)$.  Alternatively, \eqref{3.4} has a unique solution for all values $u_j\in \bbR$, $j\in \{1,2\}$, if and only if
\begin{equation}\lb{3.5}
\begin{pmatrix}
u_1(\lambda,x_1)	&		u_2(\lambda,x_1)	\\
u_1(\lambda,x_2)	&		u_2(\lambda,x_2)	\\
\end{pmatrix}
\begin{pmatrix}
c_1\\
c_2
\end{pmatrix}
=
\begin{pmatrix}
0\\
0
\end{pmatrix}
\,\text{ implies $c_1=c_2=0$.}
\end{equation}
In turn, \eqref{3.5} holds if and only if the only solution $u_*(\lambda,\,\cdot\,)$ of $(\tau - \lambda)u = 0$ on $(a,b)$ that has zeros at $x_1$ and $x_2$ is $u_*(\lambda,\,\cdot\,)\equiv 0$.  This yields the equivalence of statements $(i)$ and $(ii)$.

The proof can be completed by establishing the equivalence of statements $(i)$ and $(iv)$.  To prove that $(i)$ implies $(iv)$, we argue by way of contradiction.  Assume $\tau - \lambda$ is disconjugate on $(a,b)$ and that every real-valued solution of $(\tau - \lambda)u = 0$ on $(a,b)$ has at least one zero in $(a,b)$.  Since $\tau - \lambda$ is disconjugate, every nontrivial real-valued solution of $(\tau - \lambda)u = 0$ has at most one zero in $(a,b)$.  Thus, every nontrivial real-valued solution of $(\tau - \lambda)u = 0$ on $(a,b)$ has exactly one zero in $(a,b)$.  Choose linearly independent real-valued solutions $u_j(\lambda,\,\cdot\,)$, $j\in \{1,2\}$, of $(\tau - \lambda)u = 0$ on $(a,b)$ and distinct points $x_1,x_2\in (a,b)$ with
\begin{equation}\lb{3.7}
\begin{split}
&u_1(\lambda,x_1)=0,\quad u_1^{[1]}(\lambda,x_1)=u_1(\lambda,x_2),\\
&u_2(\lambda,x_2)=0,\quad u_2^{[1]}(\lambda,x_2)=u_2(\lambda,x_1).
\end{split}
\end{equation}
Note that $u_1(\lambda,x_2)\neq 0$ and $u_2(\lambda,x_1)\neq 0$ by hypothesis.  Since the $u_j(\lambda,\,\cdot\,)$, $j\in \{1,2\}$, are linearly independent solutions, their Wronskian (cf.~\eqref{2.4}) is a nonzero constant, say $W(u_1(\lambda,\,\cdot\,),u_2(\lambda,\,\cdot\,))(x)=c\neq 0$, $x\in (a,b)$.  Separately evaluating the Wronskian at $x=x_1$ and $x=x_2$, one obtains, by \eqref{3.7},
\begin{equation}\lb{3.8}
\begin{split}
&0 \neq c = W(u_1(\lambda,\,\cdot\,),u_2(\lambda,\,\cdot\,))(x_1) = -u_1(\lambda,x_2)u_2(\lambda,x_1),\\
&0 \neq c = W(u_1(\lambda,\,\cdot\,),u_2(\lambda,\,\cdot\,))(x_2) = u_1(\lambda,x_2)u_2(\lambda,x_1).
\end{split}
\end{equation}
The identities in \eqref{3.8} immediately imply
\begin{equation}
0 \neq 2c = u_1(\lambda,x_2)u_2(\lambda,x_1) -u_1(\lambda,x_2)u_2(\lambda,x_1) = 0,
\end{equation}
which is an obvious contradiction.

Finally, to prove that $(iv)$ implies $(i)$, suppose that $u_1(\lambda,\,\cdot\,)>0$ is a strictly positive solution of $(\tau - \lambda)u = 0$ on $(a,b)$.  Fixing a point $x_0\in (a,b)$, Lemma \ref{l4.1}
 applied to $(c,d)=(a,b)$ implies that all real-valued solutions of $(\tau - \lambda)u = 0$ on $(a,b)$ are given by
\begin{equation}\lb{3.10}
u(\lambda,x) = u_1(\lambda,x)\bigg[c_1 + c_2\int_{x_0}^x\frac{dt}{p(t)u_1(\lambda,t)^2}\bigg],\quad c_j\in \bbR,\, j\in \{1,2\},\, x\in (a,b).
\end{equation}
The function $\int_{x_0}^xdt\, \big[p(t)u_1(\lambda,t)^2\big]^{-1}$ is strictly increasing on $(a,b)$, so the factor in square brackets in \eqref{3.10} has at most one zero in $(a,b)$.  Since $u_1(\lambda,\,\cdot\,)$ does not vanish in $(a,b)$, it follows from \eqref{3.10} that every real-valued solution of $(\tau - \lambda)u = 0$ on $(a,b)$ has at most one zero in $(a,b)$.  Thus, $\tau - \lambda$ is disconjugate on $(a,b)$.
\end{proof}

In the following theorem, the principal result of this paper, we show that $\lambda_0\in \bbR$ is a lower bound of $T_F$ (equivalently, a lower bound of $T_{min}$) if and only if $(\tau - \lambda_0)u = 0$ has a strictly positive solution on $(a,b)$.  In turn, this is equivalent to $\tau - \lambda_0$ on $(a,b)$ being disconjugate on $(a,b)$.

\begin{theorem}\lb{t4.3} 
Assume Hypothesis \ref{h2.1} and let $\lambda_0\in \bbR$.  Then the following statements $(i)$--$(iv)$ are equivalent. \\[1mm]
$(i)$  $T_{min}\geq \lambda_0I_{L^2((a,b);r\,dx)}$.\\[1mm]
$(ii)$ $T_F\geq \lambda_0I_{L^2((a,b);r\,dx)}$.\\[1mm]
$(iii)$  $\tau-\lambda_0$ is disconjugate on $(a,b)$.\\[1mm]
$(iv)$  There exists a strictly positive solution $u_0(\lambda_0,\,\cdot\,)>0$ of $(\tau - \lambda_0)u = 0$ on $(a,b)$.
\end{theorem}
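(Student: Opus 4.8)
The plan is to establish the cycle $(i)\Rightarrow(iii)\Leftrightarrow(iv)\Rightarrow(i)$, using that $(iii)\Leftrightarrow(iv)$ is already furnished by Theorem~\ref{t4.2}, and to dispose of $(i)\Leftrightarrow(ii)$ separately by a general property of the Friedrichs extension. For the latter, since $T_F$ is a self-adjoint extension of $T_{min}$ one has $\dom(T_{min})\subseteq\dom(T_F)$ with $T_F$ agreeing with $T_{min}$ there, so $(ii)\Rightarrow(i)$ is immediate upon restricting the quadratic-form inequality to $\dom(T_{min})$. The converse is the characteristic feature of the Friedrichs extension: the closed form generating $T_F$ is the form-closure of $f\mapsto\big(f,\dot T f\big)_{L^2((a,b);r\,dx)}$ on $\dom\big(\dot T\big)$, and passing to the form-closure preserves the inequality ``$\geq\lambda_0$''; hence the greatest lower bounds of $\dot T$, of $T_{min}=\overline{\dot T}$, and of $T_F$ coincide, and $(i)\Leftrightarrow(ii)$ follows. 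It therefore remains to prove $(iv)\Rightarrow(i)$ and $(i)\Rightarrow(iii)$.

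For $(iv)\Rightarrow(i)$ I would run a ground-state (Picone-type) substitution. Let $u_0>0$ solve $(\tau-\lambda_0)u=0$ on $(a,b)$ and let $f\in\dom\big(\dot T\big)$, so that $f$ is compactly supported; then $\phi:=f/u_0$ is again compactly supported and locally absolutely continuous because $u_0$ is continuous and strictly positive. Using $f^{[1]}=u_0^{[1]}\phi+pu_0\phi'$ and integrating by parts (all boundary terms vanishing by compact support), I would reduce the energy to
\[
\big(f,(\dot T-\lambda_0)f\big)_{L^2((a,b);r\,dx)}=\int_a^b dx\,\Big[p^{-1}\big|f^{[1]}\big|^2+(q-\lambda_0 r)|f|^2\Big]=\int_a^b dx\,p\,u_0^2\,|\phi'|^2\geq0,
\]
the coefficient of $|\phi|^2$ collapsing to zero precisely because $-\big(u_0^{[1]}\big)'+s\,u_0^{[1]}+(q-\lambda_0 r)u_0=0$. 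Thus $\dot T\geq\lambda_0 I_{L^2((a,b);r\,dx)}$, and taking closures gives $(i)$.

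The heart of the matter, and the step I expect to be the main obstacle, is $(i)\Rightarrow(iii)$, where strict domain monotonicity (Theorem~\ref{t3.4}) is indispensable. I would argue contrapositively: if $\tau-\lambda_0$ fails to be disconjugate on $(a,b)$, then some nontrivial real solution $u$ of $(\tau-\lambda_0)u=0$ has at least two zeros, hence two consecutive zeros $x_1<x_2$ in $(a,b)$, with $u$ of one sign on $(x_1,x_2)$. Consequently $u|_{(x_1,x_2)}$ is a positive Dirichlet eigenfunction of the regular problem on $(x_1,x_2)$, so it is the ground state and $\lambda_1(T_{F,(x_1,x_2)})=\lambda_0$. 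Fixing a compact interval $[c,d]\subset(a,b)$ with $(x_1,x_2)\subsetneq(c,d)$---on which $\tau$ is regular---Theorem~\ref{t3.4} yields the strict inequality $\lambda_1(T_{F,(c,d)})<\lambda_1(T_{F,(x_1,x_2)})=\lambda_0$. The delicate point is to convert this strict gap into a genuine test function violating $(i)$: since $\dom\big(\dot T_{(c,d)}\big)$ is a form core for $T_{F,(c,d)}$, I can choose $g\in\dom\big(\dot T_{(c,d)}\big)$, compactly supported in $(c,d)$, with $Q_{F,(c,d)}(g,g)<\lambda_0\|g\|^2$. Extending $g$ by zero keeps its quasi-derivative continuous (as $g$ vanishes near $c$ and $d$), so the extension lies in $\dom\big(\dot T_{(a,b)}\big)$ and satisfies $\big(g,(\dot T_{(a,b)}-\lambda_0)g\big)_{L^2((a,b);r\,dx)}=Q_{F,(c,d)}(g,g)-\lambda_0\|g\|^2<0$, contradicting $\dot T_{(a,b)}\geq\lambda_0 I$, i.e.\ $(i)$. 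It is exactly the \emph{strictness} in Theorem~\ref{t3.4} that does the work here: the conjugate pair $x_1,x_2$ by itself only produces a test function of energy \emph{equal} to zero, whereas enlarging the interval is what pushes the Rayleigh quotient strictly below $\lambda_0$. Combining $(i)\Rightarrow(iii)$, $(iii)\Leftrightarrow(iv)$, $(iv)\Rightarrow(i)$, and $(i)\Leftrightarrow(ii)$ closes the equivalence.
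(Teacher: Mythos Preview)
Your proof is correct and follows the same route as the paper: $(i)\Leftrightarrow(ii)$ via the Friedrichs extension preserving the greatest lower bound, $(iii)\Leftrightarrow(iv)$ from Theorem~\ref{t4.2}, $(iv)\Rightarrow(i)$ by the Jacobi/Picone factorization on $\dom(\dot T)$, and $(i)\Rightarrow(iii)$ contrapositively using strict domain monotonicity (Theorem~\ref{t3.4}) on a regular subinterval strictly containing the conjugate pair. The one minor variation---closing the last step by approximating in the form core $\dom(\dot T_{(c,d)})$ to produce a compactly supported witness violating $\dot T\geq\lambda_0 I$, rather than extending form-domain elements by zero into $\dom\big(|T_{F,(a,b)}|^{1/2}\big)$ on the possibly singular interval as the paper does---is arguably cleaner but not a substantively different argument.
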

\begin{proof}
The equivalence of statements $(i)$ and $(ii)$ follows immediately from the abstract fact that a densely defined lower semibounded operator and its Friedrichs extension share the same greatest lower bound (cf., e.g., \cite[Theorem 10.17 $(i)$]{Sc12}).  Theorem \ref{t4.2}
 immediately implies the equivalence of statements $(iii)$ and $(iv)$.  To complete the proof, we show that $(iv)$ implies $(i)$ and that $(i)$ implies $(iii)$.

To prove that $(iv)$ implies $(i)$, let $u_0(\lambda_0,\,\cdot\,)>0$ be a strictly positive solution of $(\tau - \lambda_0)u = 0$ on $(a,b)$.  In order to show $(i)$, it suffices to prove $\dot T\geq \lambda_0I_{L^2((a,b);r\,dx)}$, since $T_{min}$ is the closure of $\dot T$.  Let $f\in \dom\big(\dot T\big)$.  Since $(\tau - \lambda_0)u_0(\lambda_0,\,\cdot\,)=0$ a.e.~on $(a,b)$ and $u_0(\lambda_0,\,\cdot\,)$ does not vanish on $(a,b)$, $q$ may be recovered pointwise a.e.~on $(a,b)$ by
\begin{equation}\lb{4.11}
q = \lambda_0r - s\frac{u_0^{[1]}(\lambda,\,\cdot\,)}{u_0(\lambda,\,\cdot\,)} + \frac{\big(u_0^{[1]}(\lambda,\,\cdot\,)\big)'}{u_0(\lambda,\,\cdot\,)}\, \text{ a.e.~on $\in(a,b)$}.
\end{equation}
In addition, we recall the following version of Jacobi's factorization identity (cf.~\cite[Equation (11.66)]{EGNT13}):
\begin{align}
&-\big(f^{[1]} \big)'+s f^{[1]}+\frac{(u_0^{[1]}(\lambda_0,\,\cdot\,))'}{u_0(\lambda_0,\,\cdot\,)}f-s\frac{u_0^{[1]}(\lambda_0,\,\cdot\,)}{u_0(\lambda_0,\,\cdot\,)}f\lb{4.12a}\\
&\quad =-\frac{1}{u_0(\lambda_0,\,\cdot\,)}\bigg[pu_0(\lambda_0,\,\cdot\,)^2\bigg(\frac{f}{u_0(\lambda_0,\,\cdot\,)} \bigg)'\bigg]'  \, \text{ a.e.~in $(a,b)$},\no
\end{align}
Applying \eqref{4.12a}, \eqref{4.11}, and the fact that $f$ has compact support in $(a,b)$ to integrate by parts,
\begin{equation}\lb{4.13}
\begin{split}
&\big(f,\dot Tf\big)_{L^2((a,b);r\,dx)}\\
&\quad= \lambda_0 \|f\|_{L^2((a,b);r\,dx)}^2 + \int_a^b dx\, p(x)u_0(\lambda_0,x)^2 
\bigg|\bigg(\f{f(x)}{u_0(\lambda_0,x)}\bigg)'\bigg|^2.
\end{split}
\end{equation}
The integral on the right-hand side in \eqref{4.13} is nonnegative.  Since $f\in\dom\big(\dot T\big)$ was arbitrary, it follows that $\dot T\geq \lambda_0 I_{L^2((a,b);r\,dx)}$.

To prove $(i)$ implies $(iii)$ we argue by way of contradiction.  Assume that $(i)$ holds and $\tau - \lambda_0$ is not disconjugate on $(a,b)$.  Statement $(i)$ implies
\begin{equation}\lb{4.14a}
T_F\geq \lambda_0I_{L^2((a,b);r\,dx)},
\end{equation}
and since $\tau - \lambda_0$ is not disconjugate on $(a,b)$, there exists a nontrivial real-valued solution $u_1(\lambda_0,\,\cdot\,)$ of $(\tau - \lambda_0)u = 0$ on $(a,b)$ with two consecutive zeros $x_1,x_2\in (a,b)$, $x_1<x_2$.  Without loss of generality, we may assume that $u_1(\lambda_0,\,\cdot\,)>0$ on $(x_1,x_2)$.  Let $T_{F,(x_1,x_2)}$ denote the Friedrichs extension of the minimal operator $T_{min,(x_1,x_2)}$ corresponding to the restriction of $\tau$ to the interval $(x_1,x_2)$, that is, $\tau|_{(x_1,x_2)}$, and let $\lambda_1(T_{F,(x_1,x_2)})$ denote the principal eigenvalue of $T_{F,(x_1,x_2)}$.  One notes that the differential expression $\tau|_{(x_1,x_2)}$ is regular on $(x_1,x_2)$.  Invoking \eqref{2.14}, a calculation analogous to \eqref{4.13} yields
\begin{align}
Q_{F,(x_1,x_2)}(f,f) &= \lambda_0\|f\|_{L^2((x_1,x_2);r\,dx)}^2 + \int_{x_1}^{x_2} dx\, p(x)u_0(\lambda_0,x)^2\bigg|\bigg(\f{f(x)}{u_0(\lambda_0,x)}\bigg)'\bigg|^2\no\\
&\geq \lambda_0\|f\|_{L^2((x_1,x_2);r\,dx)}^2,\quad f\in \dom\big(|T_{F,(x_1,x_2)}|^{1/2}\big)\backslash\{0\}.\lb{4.14}
\end{align}
Note that equality holds in \eqref{4.14} if $f=u_0(\lambda_0,\,\cdot\,)|_{(x_1,x_2)}$.  Therefore, it follows from Theorem \ref{t3.2} and the simplicity of the eigenvalues of $T_{F,(x_1,x_2)}$ that $\lambda_0=\lambda_1(T_{F,(x_1,x_2)})$ and $u_0(\lambda_0,\,\cdot\,)|_{(x_1,x_2)}$ is a corresponding strictly positive eigenfunction which is unique up to constant multiples.  Choose $x_3\in (a,x_1)$ and $x_4\in (x_2,b)$ such that the following chain of inclusions holds
\begin{equation}\lb{4.15}
(x_1,x_2)\subset (x_3,x_4) \subset (a,b),
\end{equation}
and let $T_{F,(x_3,x_4)}$ in $L^2((x_3,x_4);r\,dx)$ denote the Friedrichs extension of the minimal operator $T_{min,(x_3,x_4)}$ corresponding to $\tau|_{(x_3,x_4)}$. Once more, $\tau|_{(x_3,x_4)}$ is regular on $(x_3,x_4)$.  By \eqref{4.14a} and the variational characterization of the principal eigenvalue $\lambda_1(T_F)$ of $T_F$ in Theorem \ref{t3.1},
\begin{align}
\lambda_0\leq \lambda_1(T_F) &= \min_{0\neq f\in \dom(|T_F|^{1/2})}R_{T_F}\big(f;L^2((a,b);r\,dx)\big)\lb{4.16}\\
&\leq \min_{0\neq f\in \dom(|T_{F,(x_3,x_4)}|^{1/2})}R_{T_{F,(x_3,x_4)}}\big(f;L^2((x_3,x_4);r\,dx)\big)\no\\
&< \min_{0\neq f\in \dom(|T_{F,(x_1,x_2)}|^{1/2})}R_{T_{F,(x_1,x_2)}}\big(f;L^2((x_1,x_2);r\,dx)\big)\no\\
&=\lambda_1(T_{F,(x_1,x_2)}) = \lambda_0,     \no
\end{align}
which is a contradiction.  Note that the inequality ``$\leq$'' in \eqref{4.16} follows by extending $f\in \dom\big(|T_{F,(x_3,x_4)}|^{1/2}\big)\backslash\{0\}$ to $\widetilde{f}\in\dom\big(|T_{F,(a,b)}|^{1/2}\big)\backslash\{0\}$ by setting $\widetilde{f}$ equal to zero on $(a,b)\backslash (x_3,x_4)$; that is,
\begin{equation}
\widetilde{f}(x) =
\begin{cases}
f(x),& x\in (x_3,x_4),\\
0,& x\in (a,b)\backslash(x_3,x_4),
\end{cases}
\end{equation}
thereby ensuring $\widetilde{f}\in\dom\big(|T_{F,(a,b)}|^{1/2}\big)$.  The strict inequality ``$<$'' in \eqref{4.16} follows from \eqref{4.15} and the strict domain monotonicity of the principal eigenvalue of the Friedrichs extension shown in Theorem \ref{t3.4}.
\end{proof}

\begin{remark} \lb{r4.4} 
$(i)$ A simple algebraic manipulation shows that the integrand in \eqref{4.13} coincides a.e.~on $(a,b)$ with
\begin{equation}
p^{-1}u_0(\lambda_0,\,\cdot\,)^2\bigg|\frac{f^{[1]}u_0(\lambda_0,\,\cdot\,)-fu_0^{[1]}(\lambda_0,\,\cdot\,)}{u_0(\lambda_0,\,\cdot\,)^2}\bigg|^2.
\end{equation}
Hence, since $f$ is compactly supported in $(a,b)$, the fact that the integral in \eqref{4.13} is finite is consistent with the assumption that $p^{-1}\in L^1_{loc}((a,b);dx)$. \\[1mm] 
$(ii)$ Special cases of Theorem \ref{t3.4} have a long history, and several variants \dash under varying (stronger) hypotheses on the coefficients in $\tau$ \dash can be found in the literature. We refer, for instance, to \cite[Sect.~XI.6, p.~401--402]{Ha02}. 
\hfill$\diamond$
\end{remark}

We conclude by recalling an intimate connection between nonoscillatory behavior and the limit point case for $\tau$ at an endpoint. This result is well-known within the context of traditional three-term Sturm--Liouville differential expressions of the form $\tau u = r^{-1}[-(pu')'+qu]$. It was first derived by Hartman \cite{Ha48} in the particular case $r=1$ in 1948.  Three years later, in 1951, Rellich \cite{Re51} extended the result to the general three-term case under some additional smoothness assumptions on $p, q, r$; for additional results in this direction see \cite[Lemma~C.1]{Ge93}, \cite[Sect.~5]{LM03}, \cite[Corollary~3.2]{NZ92}. 

\begin{theorem}[see {\cite[Theorem~11.7, Corollary~11.8]{EGNT13}}] \ \lb{t4.5}
Assume Hypothesis \ref{h2.1}, let $c \in (a,b)$,  and suppose that $\tau-\lambda$ is nonoscillatory at $b$ for some 
$\lambda \in \bbR$.  If 
\begin{equation} 
\int_c^b |r(x)/p(x)|^{1/2} dx=\infty,    \lb{4.20}
\end{equation} 
then $\tau$ is in the limit point case at $b$.  An analogous result holds at $a$, in particular, if $\tau-\lambda_a$ is nonoscillatory at $a$ for some 
$\lambda_a\in \bbR$ and $\tau-\lambda_b$ is nonoscillatory at $b$ for some $\lambda_b \in \bbR$, and 
\begin{equation} \lb{4.21}
\int_a^c dx \, |r(x)/p(x)|^{1/2} = \infty, \quad \int_c^b dx \, |r(x)/p(x)|^{1/2} = \infty,
\end{equation}
then $\dot T_{min}$ is essentially self-adjoint and hence $T_{min}=T_{max}$ is self-adjoint.
\end{theorem}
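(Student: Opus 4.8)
The plan is to establish the limit point case at $b$ by contradiction, using the nonprincipal solution produced by the nonoscillation hypothesis together with a single Cauchy--Schwarz estimate; the essential self-adjointness statement then follows by applying this at both endpoints and invoking the deficiency-index count from Section \ref{s2}.

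First I would record that $|r/p|^{1/2} \in L^1_{loc}((a,b))$: on any compact $[\alpha,\beta] \subset (a,b)$, Cauchy--Schwarz gives $\int_\alpha^\beta |r/p|^{1/2}\,dx \leq \big(\int_\alpha^\beta r\,dx\big)^{1/2}\big(\int_\alpha^\beta p^{-1}\,dx\big)^{1/2} < \infty$ by Hypothesis \ref{h2.1}. Hence the hypothesis $\int_c^b |r/p|^{1/2}\,dx = \infty$ is a statement about the behavior near $b$ alone and is insensitive to the choice of $c \in (a,b)$. Next, since $\tau - \lambda$ is nonoscillatory at $b$, Theorem \ref{t2.8} furnishes a principal solution $u_b(\lambda,\dott)$ and a linearly independent nonprincipal solution $\hatt u_b(\lambda,\dott)$ of $(\tau - \lambda)u = 0$, where by \eqref{2.16a} the nonprincipal solution obeys $\int^b |p(x)|^{-1} \hatt u_b(\lambda,x)^{-2}\,dx < \infty$. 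Fix $x_1 \in (c,b)$ beyond the last zero (if any) of $\hatt u_b(\lambda,\dott)$, so that $\hatt u_b(\lambda,\dott)$ has no zeros on $(x_1,b)$. Now suppose, for contradiction, that $\tau$ is in the limit circle case at $b$. Then by Weyl's alternative (Theorem \ref{t2.5}\,$(i)$, with $z = \lambda$) every solution of $(\tau - \lambda)u = 0$ lies in $L^2((a,b);r\,dx)$ near $b$; in particular $\int^b r(x)\,\hatt u_b(\lambda,x)^2\,dx < \infty$.

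The key step factors $|r/p|^{1/2} = \big(r^{1/2}|\hatt u_b(\lambda,\dott)|\big)\big(p^{-1/2}|\hatt u_b(\lambda,\dott)|^{-1}\big)$ on $(x_1,b)$ and applies Cauchy--Schwarz:
\begin{align*}
\int_{x_1}^b |r/p|^{1/2}\,dx
\leq \bigg(\int_{x_1}^b r(x)\,\hatt u_b(\lambda,x)^2\,dx\bigg)^{1/2}
\bigg(\int_{x_1}^b |p(x)|^{-1}\hatt u_b(\lambda,x)^{-2}\,dx\bigg)^{1/2} < \infty,
\end{align*}
the two factors being finite by the limit circle assumption and by the nonprincipal property \eqref{2.16a}, respectively. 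Together with $\int_c^{x_1}|r/p|^{1/2}\,dx < \infty$ from the preliminary observation, this forces $\int_c^b |r/p|^{1/2}\,dx < \infty$, contradicting the hypothesis. Thus $\tau$ is in the limit point case at $b$, and the same argument run with $\hatt u_a(\lambda_a,\dott)$ and $\int_a^c |r/p|^{1/2}\,dx = \infty$ handles $a$. For the final claim, \eqref{4.21} and the two nonoscillation hypotheses place $\tau$ in the limit point case at both endpoints, so the number of limit circle endpoints is $d = 0$; since $T_{min}$ has deficiency indices $(d,d)$ (as recorded in Section \ref{s2}), we conclude $T_{min} = T_{max}$ is self-adjoint, equivalently $\dot T$ is essentially self-adjoint.

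The main obstacle is choosing the right solution for the factorization: it is essential to pair the \emph{nonprincipal} solution---whose reciprocal square is $|p|^{-1}$-integrable at $b$ by \eqref{2.16a}---against the $L^2(r\,dx)$-integrability that the limit circle hypothesis forces on that same solution. The principal solution would be useless here, since $\int^b |p(x)|^{-1} u_b(\lambda,x)^{-2}\,dx = \infty$. Once the nonprincipal solution is selected, the remainder is a routine Cauchy--Schwarz bookkeeping argument.
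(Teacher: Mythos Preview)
Your argument is correct and follows essentially the same route as the paper's proof: select a nonprincipal solution near $b$ (whose $|p|^{-1}$-weighted reciprocal square is integrable by \eqref{2.16a}), assume the limit circle case to force $r$-weighted square-integrability of that same solution, and combine the two via Cauchy--Schwarz to contradict \eqref{4.20}. Your added preliminary observation that $|r/p|^{1/2}\in L^1_{loc}((a,b))$ and your explicit deficiency-index argument for the final self-adjointness claim are details the paper leaves implicit, but the core reasoning is identical.
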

\begin{proof} For convenience of the reader we present Hartman's short argument as used in 
\cite[Theorem~11.7]{EGNT13}. It suffices to treat the case where $\tau-\lambda$ is nonoscillatory at $b$. Then by Theorem \ref{t2.8} there exists a principal solution, say $u_b(\la,\dott)$, of $\tau u = \lambda u$.  If $x_0$ strictly exceeds the largest zero of $u_b(\la,\dott)$ in $(c,b)$, then $\hatt u_b(\la,\dott)$ defined by
\begin{equation}\lb{4.22}
\hatt u_b(\la,x)=u_b(\la,x)\int_{x_0}^x \frac{dx'}{p(x')u_b(\la,x')^2}, \quad x\in (x_0,b),
\end{equation}
is a nonprincipal solution on $(x_0,b)$ by Theorem \ref{t2.10}, and consequently,
\begin{equation}\lb{4.23}
\int_{x_0}^b \frac{dx}{|p(x)|\hatt u_b(\la,x)^2}<\infty.
\end{equation}
Arguing by contradiction, and hence assuming $\tau$ to be in the limit circle case at $b$, one concludes that 
\begin{equation}\lb{4.24}
\int_{x_0}^b r(x) dx \, \hatt u_b(\la,x)^2 <\infty.
\end{equation}
Consequently, H\"older's inequality yields the contradiction, 
\begin{align}\lb{4.25}
\int_{x_0}^b dx \, |r(x)/p(x)|^{1/2} \leq \bigg|\int_{x_0}^b  r(x) dx \, \hatt u_b(\la,x)^2 \bigg|^{1/2}
\bigg|\int_{x_0}^b \frac{dx}{|p(x)|\hatt u_b(\la,x)^2}\bigg|^{1/2}<\infty.
\end{align}
\end{proof}

This result extends to certain situations where $p$ changes sign but has a constant sign near $a$ and $b$, see \cite{EGNT13}.

\medskip

\noindent 
{\bf Acknowledgments.} We are indebted to Aleksey Kostenko for helpful comments. 

\medskip

\noindent 
{\bf Declarations.} The authors state that there is no conflict of interest. \\
No datasets were generated or analyzed during the current study.

 
\end{document}